\documentclass[a4paper, 12pt]{article}

\usepackage{amscd}
\usepackage{amsmath,amsthm,amssymb,amsfonts,dsfont,amscd,amsrefs}
\usepackage{mathrsfs}
\usepackage{esint}

\usepackage{pgfplots}
\pgfplotsset{compat=1.18}

\author{Sylvie Monniaux}

\title{A lecture on Navier-Stokes equations}

\date{}

\theoremstyle{plain}
\newtheorem{proposition}{Proposition}[section]
\newtheorem{theorem}[proposition]{Theorem}

\newtheorem{lemma}[proposition]{Lemma}

\theoremstyle{definition}
\newtheorem{definition}[proposition]{Definition}
\newtheorem{example}[proposition]{Examples}
\newtheorem{exercise}{Exercise}
\newtheorem{notation}[proposition]{Notation}

\theoremstyle{remark}
\newtheorem{remark}[proposition]{Remark}

\numberwithin{equation}{section}

\begin{document}
 
\maketitle

\abstract{The content of the following pages was part of a special topics lecture given at the Australian National University during 
the first semester of 2026. That was a very nice expericence, I really enjoyed giving this 12 weeks (2 hours a week) lecture.
It is meant to be self contained, starting with results on Fourier transform and Sobolev spaces. As a toy model,
before treating the Navier-Stokes system, we focus on the non linear heat equation where the non linearity is polynomial. Ultimately,
we prove existence and uniqueness of mild solutions of the Navier-Stokes equations in critical spaces. This script contains some exercises
and the text of the mid-semester exam as well as the final exam.}

{\scriptsize{
\tableofcontents}
}

\section{Fourier transform, Sobolev spaces}

\subsection{Fourier transform}

On ${\mathds{R}}^n$ ($n \ge 1$), we consider the {\tt normalised Lebesgue measure} 
\[
{\rm d}{\rm m}_n = \frac{1}{(2\pi)^{\frac n2}} \,{\rm d}\lambda_n
\]
where $\lambda_n$ is the classical Lebesgue measure on ${\mathds{R}}^n$. 
The spaces $L^p$ ($1\le p < \infty$) that we consider here (normed by ${\rm dm}_n$) concern functions defined on 
${\mathds{R}}^n$, with complex values. For $f \in L^p({\mathds{R}}^n, {\mathds{C}})$:
\[
\|f\|_p = \left(\int_{{\mathds{R}}^n} |f|^p {\rm d}{\rm m}_n\right)^{\frac 1p}.
\]
The {\tt convolution} between two functions $f$ and $g$ (as long as the integrals exist) is given by
\[
f*g(x) = \int_{{\mathds{R}}^n} f(x-y) g(y)\, {\rm dm}_n(y) = \int_{{\mathds{R}}^n} f(y) g(x-y) \,{\rm dm}_n(y), \quad x \in {\mathds{R}}^n.
\]

\begin{proposition}
Let $1\le p<\infty$.
If $f \in L^1({\mathds{R}}^n)$ and $g \in L^p({\mathds{R}}^n)$, then $f*g \in L^p({\mathds{R}}^n)$ and it holds 
\[
\|f*g\|_p \le \|f\|_1 \|g\|_p.
\]
\end{proposition}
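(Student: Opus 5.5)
The plan is to prove Young's inequality in this special case by Hölder's inequality applied to a splitting of $|f|$, followed by Tonelli's theorem. Note that ${\rm dm}_n$ is translation invariant, so $\int |f(x-y)|\,{\rm dm}_n(y) = \|f\|_1$ for every $x$. The case $p=1$ follows directly from Tonelli: $\int\int |f(x-y)||g(y)|\,{\rm dm}_n(y)\,{\rm dm}_n(x) = \|f\|_1\|g\|_1$. So assume $1<p<\infty$ and let $p'$ denote the conjugate exponent.

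First I will work with the nonnegative measurable functions $|f|$ and $|g|$. For each $x$, write $|f(x-y)||g(y)| = |f(x-y)|^{1/p'}\cdot |f(x-y)|^{1/p}|g(y)|$. Hölder's inequality in $y$ with exponents $p'$ and $p$ then gives
\[
\int_{{\mathds{R}}^n} |f(x-y)||g(y)|\,{\rm dm}_n(y) \le \|f\|_1^{1/p'} \left(\int_{{\mathds{R}}^n} |f(x-y)||g(y)|^p\,{\rm dm}_n(y)\right)^{1/p}.
\]
Next I raise this to the power $p$ and integrate in $x$. Tonelli's theorem applies because the integrand $(x,y)\mapsto |f(x-y)||g(y)|^p$ is nonnegative and measurable on ${\mathds{R}}^{2n}$, being the composition of a measurable function with the linear map $(x,y)\mapsto x-y$. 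Integrating first in $x$ and using translation invariance yields
\[
\int_{{\mathds{R}}^n}\left(\int_{{\mathds{R}}^n} |f(x-y)||g(y)|\,{\rm dm}_n(y)\right)^p{\rm dm}_n(x) \le \|f\|_1^{p/p'}\,\|f\|_1\,\|g\|_p^p = \|f\|_1^p\|g\|_p^p.
\]

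The only real subtlety, and the main obstacle, is well-definedness. The bound above shows that $\int |f(x-y)||g(y)|\,{\rm dm}_n(y)<\infty$ for almost every $x$. Hence $f*g(x)$ is defined as an absolutely convergent integral for a.e. $x$. It is measurable in $x$ by the Fubini–Tonelli theorem applied to the truncations, or directly to the now locally integrable pieces. Finally, $|f*g(x)|\le (|f|*|g|)(x)$ gives $\|f*g\|_p\le \|f\|_1\|g\|_p$.

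I also need to check that the equivalence class of $f*g$ does not depend on the chosen representatives of $f$ and $g$. Changing $g$ on a null set changes nothing in the integral. Changing $f$ on a null set $N$ alters the integrand only where $x-y\in N$, which for each fixed $x$ is a null set in $y$. So $f*g$ is a well-defined element of $L^p({\mathds{R}}^n)$.
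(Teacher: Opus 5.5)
The paper gives no proof of this proposition; it is stated as a standard fact, and only the case $p=\infty$ is left as an exercise. So there is no argument of the author's to compare with. Your proof is correct and complete: it is the classical H\"older--Tonelli argument.

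Several steps check out:
\begin{itemize}
\item The exponent bookkeeping is right, since $p/p'+1=p$.
\item The Tonelli step is legitimate.
\item You correctly deduce from $\||f|*|g|\|_p<\infty$ that the integral defining $f*g(x)$ converges absolutely for a.e.\ $x$. This is exactly the ``as long as the integrals exist'' caveat in the paper's definition of convolution.
\item Your check that the class of $f*g$ is independent of the representatives of $f$ and $g$ is also right.
\end{itemize}

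A few cosmetic points. The identity $\int |f(x-y)|\,{\rm dm}_n(y)=\|f\|_1$ uses invariance of ${\rm dm}_n$ under the reflection $y\mapsto -y$ as well as under translations. Both hold, since ${\rm dm}_n$ is a constant multiple of Lebesgue measure. For measurability of $(x,y)\mapsto f(x-y)$, it is cleanest to take a Borel representative of $f$, which your paragraph on representatives permits. Measurability of $x\mapsto f*g(x)$ is most simply obtained by applying Tonelli to the positive and negative parts of the real and imaginary parts of $f(x-y)g(y)$; each has an a.e.\ finite $y$-integral by your bound.

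A shorter route is Minkowski's integral inequality, using the paper's translation $\tau_y$:
$\|f*g\|_p=\bigl\|\int f(y)\,\tau_y g\,{\rm dm}_n(y)\bigr\|_p\le\int |f(y)|\,\|\tau_y g\|_p\,{\rm dm}_n(y)=\|f\|_1\|g\|_p$.
Given that tool it is a one-liner, but it hides the same measurability and a.e.-convergence issues inside Minkowski's inequality. Your splitting $|f|^{1/p'}\cdot|f|^{1/p}|g|$ is self-contained, needing only H\"older and Tonelli. It also extends directly, via a three-factor H\"older inequality, to the general Young inequality $\|f*g\|_r\le\|f\|_q\|g\|_p$ with $1+\frac1r=\frac1p+\frac1q$.
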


\begin{exercise}
What happens if $p=\infty$? {\sl Hint:} think of the space of bounded continuous functions.
\end{exercise}

\begin{definition}
For $f \in L^1({\mathds{R}}^n)$, the {\tt Fourier transform} of $f$ is the function $\hat{f}$ (also denoted by ${\mathscr{F}} f$) defined by
\begin{equation}
\label{eq:defFourier}
\hat{f} (\xi) = \int_{{\mathds{R}}^n} f e_{-\xi} \,{\rm dm}_n = \int_{{\mathds{R}}^n} f(x) e^{-i\xi\cdot x} \,{\rm dm}_n(x), \quad \xi \in {\mathds{R}}^n,
\end{equation}
where for all $t\in {\mathds{R}}^n$, $e_{t}:{\mathds{R}}^n\to {\mathds{C}}$ is defined by $e_t(x)=e^{it\cdot x}$, $x\in{\mathds{R}}^n$.
\end{definition}

\begin{remark}
For all $f \in L^1({\mathds{R}}^n)$ and all $\xi\in{\mathds{R}}^n$, the function $fe_{-\xi}$ belongs to $L^1({\mathds{R}}^n)$ since
$|fe_{-\xi}|=|f|$.
\end{remark}

\begin{lemma} \label{fourier-conv}
For $f \in L^1({\mathds{R}}^n)$ and $\xi\in{\mathds{R}}^n$, we have that 
\[
\hat{f} (\xi) = (f * e_\xi)(0).
\]
\end{lemma}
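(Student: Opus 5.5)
The plan is simply to unfold the definition of the convolution at the point $x=0$ and compare it with the definition \eqref{eq:defFourier} of $\hat f(\xi)$. I will use the first of the two expressions for the convolution given above, namely $f*g(x)=\int_{{\mathds{R}}^n} f(y)\,g(x-y)\,{\rm dm}_n(y)$. I take $g=e_\xi$ and $x=0$, so the integrand becomes $f(y)\,e_\xi(-y)$.

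The key observation is the elementary identity
\[
e_\xi(-y)=e^{i\xi\cdot(-y)}=e^{-i\xi\cdot y}=e_{-\xi}(y), \qquad y\in{\mathds{R}}^n,
\]
which follows from the bilinearity of the scalar product. Substituting this into the integral gives
\[
(f*e_\xi)(0)=\int_{{\mathds{R}}^n} f(y)\,e_{-\xi}(y)\,{\rm dm}_n(y)=\hat f(\xi).
\]

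The only point requiring care, and the mild obstacle, is making sense of the convolution. Here $e_\xi$ is not in any $L^p$ with $p<\infty$, so the Proposition on $L^1 * L^p$ does not apply. I will instead argue directly that for every $x$ the integrand $y\mapsto f(y)e_\xi(x-y)$ is measurable and has modulus $|f(y)|$, as in the Remark preceding the Lemma. It is therefore integrable, so $f*e_\xi(x)$ is well defined pointwise, in particular at $x=0$.

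Incidentally, the same computation gives $f*e_\xi(x)=e_\xi(x)\hat f(\xi)$, so $f*e_\xi$ is in fact a bounded continuous function. This is not needed for the statement.
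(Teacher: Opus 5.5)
Your proposal is correct and is the same argument as the paper's, whose proof just says to write out the convolution at $0$ while checking that the integral exists. Your observation that $|f(y)e_\xi(x-y)|=|f(y)|$ is exactly that check, and your choice of $\int f(y)\,g(x-y)\,{\rm dm}_n(y)$ (the paper's second written form, not the first) is immaterial.
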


\begin{proof}
Just write it (being careful that all integrals exist!).
\end{proof}

For $a \in {\mathds{R}}^n$, we denote by $\tau_a$ the {\tt translation} by $a$: $\tau_a (f) (y) = f(y-a)$ for (almost) all $y \in {\mathds{R}}^n$.
Recall that for all $1\le p<\infty$ and all $f\in L^p({\mathds{R}}^n)$, 
\begin{equation}
\label{eq:translationLp}
\|\tau_a(f)-f\|_p\xrightarrow[|a|\to0]{}0.
\end{equation}

\begin{exercise}
Prove \eqref{eq:translationLp}. {\sl Hint:} use the fact that compactly supported continuous functions are dense in $L^p$.
\end{exercise}

A $n$ dimensional multi-index is a $n$-tuple $\alpha = (\alpha_1, ... , \alpha_n) \in {\mathds{N}}^n$. Its length is denoted by
$|\alpha|$ and is equal to $\alpha_1 + ... + \alpha_n$. The $\alpha$th derivative is: 
$D^\alpha = \partial_{x_1}^{\alpha_1} ...  \partial_{x_n}^{\alpha_n}$. To simplify future notations,
\[
D_\alpha = i^{-|\alpha|} D^\alpha = \left(\frac{1}{i^{\alpha_1}}\partial_{x_1}^{\alpha_1}\right) ... \left(\frac{1}{i^{\alpha_n}}\partial_{x_n}^{\alpha_n}\right).
\]

\begin{exercise}
Show that for all $\xi\in{\mathds{R}}^n$, we have that
 $D_\alpha e_\xi = \xi^\alpha e_\xi$, where $\xi^\alpha = \xi_1^{\alpha_1} ... \xi_n^{\alpha_n}$.
\end{exercise}

\begin{theorem} 
\label{thm:Fourier1}
Let $f,g \in L^1({\mathds{R}}^n)$, $x\in {\mathds{R}}^n$ and $\lambda >0$. The following identities hold
\begin{enumerate}
\item
${\mathscr{F}} (\tau_x f) = e_{-x} {\mathscr{F}} f$;
\item
${\mathscr{F}}(e_x f) = \tau_x ({\mathscr{F}} f)$; 
\item
${\mathscr{F}}(f*g) = ({\mathscr{F}} f) ({\mathscr{F}} g)$;
\item
denoting $f_\lambda(x) = f\bigl(\frac{1}{\lambda}x\bigr)$, $x \in {\mathds{R}}^n$, we have that 
${\mathscr{F}}( f_\lambda) (\xi) = \lambda^n ({\mathscr{F}} f)(\lambda \xi)$ for all $\xi \in {\mathds{R}}^n$.
\end{enumerate} 
\end{theorem}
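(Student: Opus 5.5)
All four identities come from writing out definition \eqref{eq:defFourier} and making a linear change of variables. Lebesgue measure, and hence ${\rm dm}_n$, is invariant under translations. Under a dilation $y=\frac1\lambda x$ it scales by $\lambda^n$. In each case the integrand is integrable: $|\tau_x f|$, $|e_x f|$ and $|f_\lambda|$ are of the same type as $|f|$, and $f*g\in L^1({\mathds{R}}^n)$ by the Proposition with $p=1$. So every integral below is well defined.

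For item 1, I would write ${\mathscr{F}}(\tau_x f)(\xi)=\int f(y-x)e^{-i\xi\cdot y}\,{\rm dm}_n(y)$. Substituting $z=y-x$ gives
\[
e^{-i\xi\cdot x}\int f(z)e^{-i\xi\cdot z}\,{\rm dm}_n(z)=e_{-x}(\xi)\hat f(\xi).
\]
Item 2 needs no substitution. Since $e^{ix\cdot y}e^{-i\xi\cdot y}=e^{-i(\xi-x)\cdot y}$, we get ${\mathscr{F}}(e_xf)(\xi)=\hat f(\xi-x)=\tau_x(\hat f)(\xi)$.

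For item 4, I would substitute $y=\frac1\lambda x$, so that ${\rm dm}_n(x)=\lambda^n{\rm dm}_n(y)$ and $\xi\cdot x=(\lambda\xi)\cdot y$. This gives
\[
\int f\bigl(\tfrac{x}{\lambda}\bigr)e^{-i\xi\cdot x}\,{\rm dm}_n(x)=\lambda^n\hat f(\lambda\xi).
\]

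Item 3 is the only step that needs more than bookkeeping, because two integrals must be exchanged. I would write
\[
{\mathscr{F}}(f*g)(\xi)=\int\!\!\int f(x-y)g(y)e^{-i\xi\cdot x}\,{\rm dm}_n(y)\,{\rm dm}_n(x).
\]
To justify Fubini, I would check that the absolute value of the integrand, $|f(x-y)||g(y)|$, is integrable on ${\mathds{R}}^n\times{\mathds{R}}^n$. By Tonelli and translation invariance, integrating first in $x$, its integral equals $\|f\|_1\|g\|_1<\infty$. The same computation shows that $f*g$ is defined almost everywhere and lies in $L^1$.

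After exchanging the order of integration, I would split $e^{-i\xi\cdot x}=e^{-i\xi\cdot(x-y)}e^{-i\xi\cdot y}$ and substitute $z=x-y$ in the inner integral. The inner integral then becomes $\hat f(\xi)$, and the remaining outer integral is $\hat g(\xi)$. Equivalently, one could use item 1 inside the integral over $y$. The only care needed is the measurability and integrability of $(x,y)\mapsto f(x-y)g(y)$, which is the standard argument behind the convolution Proposition.
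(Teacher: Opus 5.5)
Your proposal is correct and takes the same approach as the paper, whose proof only says the identities ``come directly from the definition of the Fourier transform.'' You add the explicit changes of variables and the Tonelli--Fubini justification for item~3 that the paper leaves implicit.
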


\begin{proof}
Comes directly from the definition of the Fourier transform.
\end{proof}

Let ${\mathscr{C}}_0({\mathds{R}}^n)$ the space of bounded continuous functions on ${\mathds{R}}^n$ (with values in ${\mathds{C}}$)
with 0 limit at infinity.

\begin{exercise}
Show that, endowed with the $L^\infty$ norm, ${\mathscr{C}}_0({\mathds{R}}^n)$ is a Banach space.
\end{exercise}

\begin{theorem}[Riemann-Lebesgue Lemma]
\label{thm:RL}
The Fourier transform ${\mathscr{F}}:f \mapsto \hat{f}$ is linear, continuous from $L^1({\mathds{R}}^n)$ to ${\mathscr{C}}_0({\mathds{R}}^n)$.
\end{theorem}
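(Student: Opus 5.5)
Linearity of ${\mathscr{F}}$ is immediate from the linearity of the integral in \eqref{eq:defFourier}. For continuity, the remark after the definition gives $|\hat f(\xi)| \le \int_{{\mathds{R}}^n} |f e_{-\xi}|\,{\rm dm}_n = \|f\|_1$ for every $\xi$, so $\|\hat f\|_\infty \le \|f\|_1$. It therefore remains to show that $\hat f$ is continuous and tends to $0$ at infinity; boundedness has just been established.

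\emph{Continuity of $\hat f$.} Let $\xi_k \to \xi$. Then $f(x)e^{-i\xi_k\cdot x} \to f(x)e^{-i\xi\cdot x}$ for every $x$, and the integrands are dominated by $|f| \in L^1({\mathds{R}}^n)$. By dominated convergence, $\hat f(\xi_k) \to \hat f(\xi)$.

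\emph{Decay at infinity.} This is the main step, and I would use the translation trick. Fix $\xi \neq 0$ and set $a = \pi \xi/|\xi|^2$, so that $e^{-i\xi\cdot a} = e^{-i\pi} = -1$. By Theorem~\ref{thm:Fourier1}(1),
\[
\widehat{\tau_a f}(\xi) = e^{-i\xi\cdot a}\hat f(\xi) = -\hat f(\xi).
\]
Averaging this identity with $\hat f(\xi)$ gives
\[
\hat f(\xi) = \tfrac12\bigl(\hat f(\xi) - \widehat{\tau_a f}(\xi)\bigr) = \tfrac12\,{\mathscr{F}}(f-\tau_a f)(\xi).
\]
Applying the sup bound above to $f - \tau_a f$ yields
\[
|\hat f(\xi)| \le \tfrac12\|f-\tau_a f\|_1 .
\]
As $|\xi|\to\infty$ we have $|a| = \pi/|\xi| \to 0$, so by \eqref{eq:translationLp} with $p=1$ the right-hand side tends to $0$. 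Hence $\hat f \in {\mathscr{C}}_0({\mathds{R}}^n)$.

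An alternative route for the decay is to check it on a dense class (for instance indicator functions of boxes, whose Fourier transforms are explicit products of $\frac{\sin}{\,\cdot\,}$-type factors) and then extend using $\|\hat f - \hat g\|_\infty \le \|f-g\|_1$ together with the closedness of ${\mathscr{C}}_0$ in $L^\infty$. The translation argument avoids this computation, relying only on \eqref{eq:translationLp} and Theorem~\ref{thm:Fourier1}, so I would use it.
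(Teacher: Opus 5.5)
Your proof is correct and follows essentially the same route as the paper. Continuity of $\hat f$ comes from dominated convergence. The decay at infinity comes from writing $\hat f(\xi)$ as half of ${\mathscr{F}}(f-\tau_a f)(\xi)$ with $a=\pi\xi/|\xi|^2$ and then invoking \eqref{eq:translationLp}; you cite Theorem~\ref{thm:Fourier1}(1) where the paper does the change of variables by hand.

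You also make explicit the bound $\|\hat f\|_\infty\le\|f\|_1$. This bound is what gives continuity of the map ${\mathscr{F}}:L^1({\mathds{R}}^n)\to{\mathscr{C}}_0({\mathds{R}}^n)$, and the paper leaves it implicit.
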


\begin{proof}
Let $f\in L^1({\mathds{R}}^n)$. Since $e_{-x}:\xi\mapsto e^{-ix\cdot\xi}$ is continuous on ${\mathds{R}}^n$ 
for all $x\in{\mathds{R}}^n$ and
$|fe_{-x}|\le |f| \in L^1({\mathds{R}}^n)$, the dominated convergence theorem ensures that 
$\xi\mapsto \hat{f}(\xi)=\int_{{\mathds{R}}^n}f e_{-\xi}\,{\rm dm}_n$ is continuous. Linearity of ${\mathds{F}}$ 
is immediate from the definition.

\noindent
Now, to prove that $\hat{f}(\xi)\xrightarrow[|\xi|\to\infty]{}0$, let $\xi\in {\mathds{R}}^n$, $\xi\neq 0$ and write 
$-1=e^{-i\pi}=e^{-i\xi\cdot\frac{\pi\xi}{|\xi|^2}}$. We have then
\begin{align*}
\hat{f}(\xi)&= \int_{{\mathds{R}}^n} f e_{-i\xi}\,{\rm dm}_n\\
&=\int_{{\mathds{R}}^n} f(x) e^{-ix\cdot\xi}\,{\rm dm}_n(x)\\
&=-\int_{{\mathds{R}}^n} f(x) e^{-ix\cdot\xi}e^{i\xi\cdot\frac{\pi\xi}{|\xi|^2}}\,{\rm dm}_n(x)\\
&=-\int_{{\mathds{R}}^n} f(x) e^{-i(x+\frac{\pi\xi}{|\xi|^2})\cdot\xi}\,{\rm dm}_n(x)\\
&=-\int_{{\mathds{R}}^n} f(y-\tfrac{\pi\xi}{|\xi|^2}) e^{-iy\cdot\xi}\,{\rm dm}_n(y), \quad 
\mbox{\footnotesize with the change of variable $y=x+\tfrac{\pi\xi}{|\xi|^2}$}\\
&=-\int_{{\mathds{R}}^n} \tau_{\tfrac{\pi\xi}{|\xi|^2}}(f) e_{-\xi}\,{\rm dm}_n, \quad 
\mbox{\footnotesize where $\tau_a$ is the translation defined above}.
\end{align*}
Therefore, writing $\hat{f}(\xi)$ as the mean value of the first and the last lines:
\[
\hat{f}(\xi)=\frac{1}{2}\int_{{\mathds{R}}^n} \bigl(f-\tau_{\tfrac{\pi\xi}{|\xi|^2}}(f)\bigr)e_{-\xi}\,{\rm dm}_n,
\]
we obtain
\[
|\hat{f}(\xi)|\le \frac{1}{2}\bigl\|f-\tau_{\tfrac{\pi\xi}{|\xi|^2}}(f)\bigr\|_1\xrightarrow[|\xi|\to \infty]{}0
\]
thanks to \eqref{eq:translationLp}
\end{proof}

\subsection{The Schwartz space}

We denote by ${\mathscr{S}}({\mathds{R}}^n)$ the space of {\tt rapidly decreasing functions} on ${\mathds{R}}^n$,
also known as the {\tt Schwartz space}. It consists in smooth functions that decrease, as well as all their derivatives, faster than any polynomials.
More precisely:
\[
{\mathscr{S}}({\mathds{R}}^n):=\bigl\{f\in{\mathscr{C}}^\infty({\mathds{R}}^n); \forall\,\alpha\in{\mathds{N}}^n,\beta\in{\mathds{N}}^n:
\sup_{x\in{\mathds{R}}^n}|x^\alpha D_\beta f(x)|<\infty\bigr\}.
\]
Recall that for a multi-index $\alpha\in{\mathds{N}}^n$ and $x\in{\mathds{R}}^n$, $x^\alpha$ denotes $x_1^{\alpha_1}...x_n^{\alpha_n}$
and for a multi-index $\beta\in{\mathds{N}}^n$ and $f\in {\mathscr{C}}^\infty({\mathds{R}}^n)$, $D_\beta f$ denotes
$i^{-|\beta|}\partial_{x_1}^{\beta_1}...\partial_{x_n}^{\beta_n}f$. In other words, up to the factor $i^{-|\beta|}$ ($|\beta|=\beta_1+...+\beta_n$),
$D_\beta f$ is the $\beta_1$ derivative with respect to $x_1$, $\beta_2$ derivative with respect to $x_2$,..., $\beta_n$ derivative
with respect to $x_n$ of the function $f$.

\begin{example}
A prototype of a Schwartz function is the gaussian ${\mathds{R}}^n\ni x\mapsto e^{-|x|^2}$. It is also clear that 
${\mathscr{C}}^\infty_c({\mathds{R}}^n) \subset {\mathscr{S}}({\mathds{R}}^n)$.
\end{example}

On ${\mathscr{S}}({\mathds{R}}^n)$, we define the family of seminorms
\[
p_{\alpha,\beta}(f)=\sup_{x\in{\mathds{R}}^n}|x^\alpha D_\beta f(x)|, \quad \alpha,\beta\in{\mathds{R}}^n.
\]

\begin{exercise}
\begin{enumerate}
\item
Show that for all $\alpha,\beta\in{\mathds{N}}^n$, $p_{\alpha,\beta}$ are seminorms on ${\mathscr{S}}({\mathds{R}}^n)$
(verify all properties of a norm up to separation).
\item
Show that ${\mathscr{S}}({\mathds{R}}^n)$ with the family of seminorms $\{p_{\alpha,\beta}, \alpha,\beta\in{\mathds{N}}^n\}$
is a locally convex topological vector space.
\item
Show that if $|\alpha|,|\beta|\le N$, the following inequality is true:
\[
p_{\alpha,\beta}(f)\le \sup_{x\in{\mathds{R}}^n, |\gamma|\le N}(1+|x|)^N |D_\gamma f(x)| =: p_N(f), \quad f\in{\mathscr{S}}({\mathds{R}}^n).
\]
\item
Show that $d:{\mathscr{S}}({\mathds{R}}^n)\times {\mathscr{S}}({\mathds{R}}^n)\to [0,\infty)$ defined by
\[
d(f,g)=\sum_{N=0}^\infty 2^{-(N+1)}\frac{p_N(f-g)}{1+p_N(f-g)}
\]
is a distance which makes ${\mathscr{S}}({\mathds{R}}^n)$ a complete metric space.
\end{enumerate}
\end{exercise}

\begin{theorem}
The Schwartz space is not normable.
\end{theorem}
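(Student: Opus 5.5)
We argue by contradiction. Suppose there is a norm $\|\cdot\|$ on ${\mathscr{S}}({\mathds{R}}^n)$ inducing the topology given by the seminorms $p_{\alpha,\beta}$, equivalently by the increasing family $(p_N)_{N\ge0}$ of the exercise. We use the standard description of continuity of seminorms on a locally convex space with an increasing defining family, and the fact that the unit ball of a norm is a bounded neighbourhood of $0$.

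First, the open unit ball $B=\{\|f\|<1\}$ is a neighbourhood of $0$ in the Fréchet topology. It therefore contains a basic neighbourhood $\{p_N<\varepsilon\}$ for some $N\in{\mathds{N}}$ and $\varepsilon>0$, since the $p_N$ increase with $N$. By homogeneity this gives
\[
\|f\|\le \tfrac{1}{\varepsilon}\,p_N(f),\quad f\in{\mathscr{S}}({\mathds{R}}^n).
\]
Second, each seminorm $p_M$ is continuous for the topology induced by $\|\cdot\|$. Hence $\{p_M<1\}$ contains a ball $\{\|f\|<\delta_M\}$, so $p_M(f)\le \delta_M^{-1}\|f\|$. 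Taking $M=N+1$ and combining the two inequalities, we get a constant $C$ with
\[
p_{N+1}(f)\le C\,p_N(f),\quad f\in{\mathscr{S}}({\mathds{R}}^n).
\]

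It remains to contradict this inequality. The main point is to choose test functions carefully. Fix a nonzero $\varphi\in{\mathscr{C}}^\infty_c({\mathds{R}}^n)$ supported in the unit ball, and set $f_k=\varphi(\cdot-k e_1)$ for $k\in{\mathds{N}}$, with $e_1$ the first basis vector. Derivatives commute with translation, and on the support of $f_k$ we have $k-1\le |x|\le k+1$. Therefore
\[
p_N(f_k)\le (k+2)^N \max_{|\gamma|\le N}\|D_\gamma\varphi\|_\infty,
\qquad
p_{N+1}(f_k)\ge k^{N+1}\|\varphi\|_\infty,
\]
where the second bound uses $\gamma=0$ and the weight $(1+|x|)^{N+1}\ge k^{N+1}$ on the support. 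The ratio $p_{N+1}(f_k)/p_N(f_k)\ge c\,k^{N+1}/(k+2)^N$ tends to infinity as $k\to\infty$, which contradicts $p_{N+1}\le C p_N$.

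The only delicate step is justifying the two continuity inequalities from the abstract equivalence of topologies. This is the standard neighbourhood-base argument above, relying on the monotonicity $p_N\le p_{N+1}$.
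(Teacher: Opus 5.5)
Your proof is correct, but it is organised differently from the paper's.

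The paper quotes the criterion that a locally convex metric vector space is normable if and only if $0$ has a bounded neighbourhood. It then shows every basic neighbourhood $U_{N,\varepsilon}$ is unbounded, using dilated Gaussians $f_k(x)=k^{-M}\varphi(x/k)$ with $M>N$. The scaling identity $p_{\alpha,\beta}(f_k)=k^{-M+|\alpha|-|\beta|}p_{\alpha,\beta}(\varphi)$ sends all seminorms of order at most $N$ to $0$, while $p_{\alpha,0}(f_k)=k\,p_{\alpha,0}(\varphi)\to\infty$ when $|\alpha|=M+1$.

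You instead prove inline the only direction of that criterion that is actually needed, turning the assumed norm into the explicit domination $p_{N+1}\le C\,p_N$. You then refute it with translated bumps: sup norms of derivatives are translation invariant, while the polynomial weight sees the support escape to infinity. Your route is self-contained and gives the slightly sharper statement that $p_{N+1}$ is not dominated by $p_N$, with no auxiliary exponent $M$ to tune. The paper's route is shorter once the criterion is granted, and it matches the dilation and homogeneity arguments used later in the notes.

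One detail you pass over quickly concerns the equivalence of the two families of seminorms. The inclusion $\{p_N<\varepsilon\}\subset U_{N,\varepsilon}$ is the exercise. However, continuity of $p_{N+1}$ for the original topology needs the converse bound $(1+|x|)^{N+1}\lesssim \sum_{|\alpha|\le N+1}|x^\alpha|$. You can sidestep this by using $p_{\alpha,0}$ with $\alpha=(N+1,0,\ldots,0)$ in place of $p_{N+1}$. That seminorm is continuous by definition of the topology, and on the support of $f_k$ one has $x_1\ge k-1$, so $p_{\alpha,0}(f_k)\ge (k-1)^{N+1}\|\varphi\|_\infty$ directly.
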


\begin{proof}
First, it is clear that the distance $d$ does not come from a norm. If it were the case, then the norm would be given by $\|f\|=d(f,0)$, 
$f\in {\mathscr{S}}({\mathds{R}}^n)$, but this doesn't satisfy $\|\lambda f\|=|\lambda|\|f\|$ if $\lambda \in{\mathds{R}}\setminus\{-1,0,1\}$.

\noindent
Now, a locally convex metric vector space is normable if (and only if) there is a bounded neighbourhood of 0. We will show that 
all neighbourhoods of 0 are unbounded. Bounded here means: bounded for every seminorm $p_{\alpha,\beta}$. The topology of
${\mathscr{S}}({\mathds{R}}^n)$ is given by the family of seminorms $\{p_{\alpha,\beta}, \alpha,\beta\in{\mathds{N}}^n\}$: it means that
a {\tt basis of neighbourhoods} of 0 is given by 
\[
U_{N,\varepsilon}:=\bigl\{f\in {\mathscr{S}}({\mathds{R}}^n); p_{\alpha,\beta}(f)<\varepsilon,\ \forall\ |\alpha|,|\beta|\le N\bigr\}.
\]
Choose your favourite non zero function in the Schwartz space, for instance $\varphi(x)=e^{-\tfrac{1}{2}\,|x|^2}$,
$x\in{\mathds{R}}^n$. For $k\ge 1$ and $M\in{\mathds{N}}$
to be chosen later, denote by $f_k$ the function defined by 
\[
f_k(x)=k^{-M}\varphi\bigl(\tfrac{x}{k}\bigr), \quad x\in{\mathds{R}}^n.
\]
It is easy to calculate (exercise!) $p_{\alpha,\beta}(f_k)$:
\[
p_{\alpha,\beta}(f_k)=k^{-M+|\alpha|-|\beta|}p_{\alpha,\beta}(\varphi).
\]
If $|\alpha|,|\beta|\le N$ and we choose $M>N$, we have that 
\[
p_{\alpha,\beta}(f_k)\le k^{-M+N}p_N(\varphi)\xrightarrow[k\to\infty]{}0.
\]
Then for $k$ large enough, $f_k\in U_{N,\varepsilon}$. But for $\beta=(0,...,0)$ and $\alpha\in{\mathds{N}}^n$
with $|\alpha|=M+1$, we have $p_{\alpha,\beta}(f_k)=k p_{\alpha,\beta}(\varphi) \xrightarrow[k\to\infty]{}+\infty$. This means that 
$U_{N,\varepsilon}$ is not bounded. Since $\{U_{N,\varepsilon}, N\in{\mathds{N}}, \varepsilon >0\}$ is a basis of neighbourhoods of 0,
this proves that all neighbourhoods of 0 are unbounded.
\end{proof}

\begin{exercise}
\label{ex:gaussian}
Let $\varphi$ be the function used in the proof above: $\varphi(x)=e^{-\tfrac{1}{2}\,|x|^2}$, $x\in {\mathds{R}}^n$. Prove that
${\mathscr{F}}(\varphi)=\varphi$. Recall that $\int_0^\infty e^{-t^2}\,{\rm d}t =\frac{\sqrt{\pi}}{2}$.
\end{exercise}

Next, we will see that the Schwartz space is very well tailored for the Fourier transform.

\begin{theorem}
The Fourier transform maps ${\mathscr{S}}({\mathds{R}}^n)$ to ${\mathscr{S}}({\mathds{R}}^n)$ continuously.
\end{theorem}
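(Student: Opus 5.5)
The plan is to establish two exchange identities, for $f\in{\mathscr{S}}({\mathds{R}}^n)$ and any multi-index $\alpha$:
\[
D_\alpha \hat f = (-1)^{|\alpha|}\,{\mathscr{F}}(x^\alpha f), \qquad {\mathscr{F}}(D_\alpha f) = \xi^\alpha \hat f .
\]
Combining them gives, for all $\alpha,\beta\in{\mathds{N}}^n$,
\[
\xi^\alpha D_\beta \hat f(\xi) = (-1)^{|\beta|}\,{\mathscr{F}}\bigl(D_\alpha(x^\beta f)\bigr)(\xi).
\]
The bound $|\hat g(\xi)|\le \|g\|_1$ from the definition then yields
\[
p_{\alpha,\beta}(\hat f)\le \|D_\alpha(x^\beta f)\|_1 .
\]
This shows at once that $\hat f$ is smooth with all the decay required for membership in ${\mathscr{S}}({\mathds{R}}^n)$. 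It also gives continuity, once the right-hand side is controlled by finitely many seminorms of $f$.

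For the first identity, I differentiate under the integral sign in \eqref{eq:defFourier}. We have $\partial_{\xi_j} e^{-ix\cdot\xi} = -ix_j e^{-ix\cdot\xi}$, and the dominating function $|x_j f(x)|$ is integrable because $f$ decays faster than any polynomial. By induction on $|\alpha|$, using dominated convergence as in the proof of Theorem \ref{thm:RL}, this gives smoothness of $\hat f$ and the formula $D_\alpha \hat f = (-1)^{|\alpha|}{\mathscr{F}}(x^\alpha f)$; the powers of $i$ are exactly absorbed by the normalisation $D_\alpha=i^{-|\alpha|}D^\alpha$. For the second identity, I integrate by parts in one variable at a time:
\[
\int \partial_{x_j} f(x)\, e^{-ix\cdot\xi}\,{\rm dm}_n(x) = i\xi_j \hat f(\xi).
\]
The boundary terms vanish since $f$ is bounded by $C(1+|x|)^{-n-1}$ and tends to $0$ at infinity; Fubini justifies the iterated integration. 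Iterating gives ${\mathscr{F}}(D_\alpha f)=\xi^\alpha\hat f$. Both identities are applied to $x^\beta f$, which belongs to ${\mathscr{S}}({\mathds{R}}^n)$ by Leibniz' rule.

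The main step, though routine, is the quantitative estimate. By Leibniz, $D_\alpha(x^\beta f)$ is a finite combination of terms $x^{\beta-\gamma}D_{\alpha-\gamma} f$. Each is bounded by $C(1+|x|)^{|\beta|}\,|D_{\alpha-\gamma}f(x)|$, and we split
\[
\int (1+|x|)^{|\beta|}|D_\delta f|\,{\rm dm}_n \le \Bigl(\int (1+|x|)^{-n-1}{\rm dm}_n\Bigr)\,\sup_x (1+|x|)^{|\beta|+n+1}|D_\delta f(x)|.
\]
The first factor is finite, and the second is bounded by $p_N(f)$ with $N=|\alpha|+|\beta|+n+1$, in the notation of the exercise. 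Hence $p_{\alpha,\beta}(\hat f)\le C_{\alpha,\beta}\,p_N(f)$. Since ${\mathscr{F}}$ is linear and each seminorm of $\hat f$ is dominated by one seminorm of $f$, ${\mathscr{F}}$ is continuous for the locally convex topology, equivalently for the metric $d$.
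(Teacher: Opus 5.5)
Your proof is correct and follows the paper's route. Like the paper, you use the exchange identities \eqref{eq:diff-mult} to get $\xi^\alpha D_\beta\hat f=(-1)^{|\beta|}{\mathscr{F}}\bigl(D_\alpha(x^\beta f)\bigr)$, then bound in $L^1$ against $(1+|x|)^{-n-1}$ to obtain $p_{\alpha,\beta}(\hat f)\lesssim p_N(f)$, which is exactly what the paper's continuity exercise asks for. Expanding $D_\alpha(x^\beta f)$ with Leibniz also correctly handles the order of multiplication and differentiation, which that exercise states somewhat loosely by writing $(-x)^\beta D_\alpha f$.
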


\begin{proof}
For all $f\in{\mathscr{S}}({\mathds{R}}^n)$, all $\alpha \in {\mathds{N}}^n$, we have that
$D_\alpha f\in {\mathscr{S}}({\mathds{R}}^n)\subset L^1({\mathds{R}}^n)$
and $x \mapsto x^\alpha f(x) \in {\mathscr{S}}({\mathds{R}}^n)\subset L^1({\mathds{R}}^n)$, so that it makes sense to 
compute their Fourier transform. Integrations by parts and the dominated convergence theorem give for all $\xi\in{\mathds{R}}^n$
\begin{equation}
\label{eq:diff-mult}
{\mathscr{F}}(D_\alpha f) (\xi) = \xi^\alpha ({\mathscr{F}} f )(\xi) \quad \mbox{ and } \quad 
{\mathscr{F}}\bigl(x \mapsto (-x)^\alpha f(x)\bigr)(\xi) = D_\alpha ({\mathscr{F}} f)(\xi).
\end{equation}
In other words, ${\mathscr{F}}$ exchanges derivatives and multiplication with a polynomial. With that in hands, the claim
that ${\mathscr{F}}$ maps ${\mathscr{S}}({\mathds{R}}^n)$ to ${\mathscr{S}}({\mathds{R}}^n)$ is straightforward (exercise!). 
For the continuity of the map, see the exercise below.
\end{proof}

\begin{exercise}
Show that for all $N\in{\mathds{N}}$ and all $\alpha,\beta\in{\mathds{N}}^n$ with $|\alpha|,|\beta|\le N$, we have that
\begin{enumerate}
\item
$\bigl\|x\mapsto (-x)^\beta D_\alpha f(x)\bigr\|_1\le C_n \,p_{N+n+1}(f)$ where $C_n$ is a constant depending only on the dimension $n$;
\item
and then for all $\xi\in {\mathds{R}}^n$, $\bigl|\xi^\alpha D_\beta ({\mathscr{F}}(f))(\xi)\bigr|\le C_n\, p_{N+n+1}(f)$.
\item
Conclude that ${\mathscr{F}}:{\mathscr{S}}({\mathds{R}}^n)\to {\mathscr{S}}({\mathds{R}}^n)$ is continuous.
\end{enumerate}
\end{exercise}

\noindent
The scalar product in $L^2({\mathds{R}}^n)$ is denoted by $\langle \cdot , \cdot \rangle$ ; it is defined by
\[
\langle f,g\rangle = \int_{{\mathds{R}}^n} f \overline{g} \ dm_n, \quad f,g \in L^2({\mathds{R}}^n).
\]
The notation $\overline{z}$ for $z \in {\mathds{C}}$ is used to denote the conjugate of $z$.

We are now in position to state (and prove) Plancherel (Parseval) theorem.

\begin{theorem}[Plancherel or Parseval-Plancherel identity]
The Fourier transform is an isometric isomorphism of ${\mathscr{S}}({\mathds{R}}^n)$, endowed with the induced topology of 
$L^2({\mathds{R}}^n)$.
\end{theorem}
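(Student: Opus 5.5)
The plan is to establish the inversion formula on ${\mathscr{S}}({\mathds{R}}^n)$ first, and then obtain the isometry from a duality identity. Concretely, I will show that for $f\in{\mathscr{S}}({\mathds{R}}^n)$ and all $x\in{\mathds{R}}^n$,
\[
f(x)=\int_{{\mathds{R}}^n}\hat f(\xi)e^{ix\cdot\xi}\,{\rm dm}_n(\xi).
\]
Equivalently, setting $\check g(x)=\hat g(-x)$, we have ${\mathscr{F}}(\check{\hat f})=f$. It follows that ${\mathscr{F}}$ is a bijection of ${\mathscr{S}}({\mathds{R}}^n)$, whose inverse is $g\mapsto \widehat{g}(-\,\cdot)$; the previous theorem says this inverse is also continuous.

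The first ingredient is the multiplication formula $\int \hat f g\,{\rm dm}_n=\int f\hat g\,{\rm dm}_n$ for $f,g\in L^1({\mathds{R}}^n)$. It follows from Fubini's theorem, since $(x,\xi)\mapsto f(x)g(\xi)e^{-ix\cdot\xi}$ is integrable on ${\mathds{R}}^{2n}$.

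Next I take $g(\xi)=e^{ix\cdot\xi}\varphi(\xi/\lambda)$, where $\varphi(\xi)=e^{-\frac12|\xi|^2}$ is the Gaussian of Exercise \ref{ex:gaussian}, which satisfies $\hat\varphi=\varphi$. Theorem \ref{thm:Fourier1} (items 2 and 4) gives $\hat g(y)=\lambda^n\varphi(\lambda(y-x))$. The multiplication formula then yields
\[
\int \hat f(\xi)e^{ix\cdot\xi}\varphi(\xi/\lambda)\,{\rm dm}_n(\xi)=\int f(y)\lambda^n\varphi(\lambda(y-x))\,{\rm dm}_n(y)=\int f(x+z/\lambda)\varphi(z)\,{\rm dm}_n(z).
\]
Now let $\lambda\to\infty$. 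On the left, dominated convergence applies because $\hat f\in L^1$, and the integral tends to $\int\hat f e_x\,{\rm dm}_n$ since $\varphi(0)=1$. On the right, $f$ is bounded and continuous, so the integral tends to $f(x)\int\varphi\,{\rm dm}_n=f(x)\hat\varphi(0)=f(x)$.

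This limiting step is the crux, and the place to be careful is the normalisation: it must come out to exactly $\int\varphi\,{\rm dm}_n=1$, and with the normalised measure ${\rm dm}_n$ this is precisely $\hat\varphi(0)=\varphi(0)=1$.

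For the isometry, I apply the multiplication formula with $g=\overline{\hat h}$ for $h\in{\mathscr{S}}({\mathds{R}}^n)$. The inversion formula gives $\hat g=\overline{h}$: indeed,
\[
\hat g(x)=\int\overline{\hat h(\xi)}e^{-ix\cdot\xi}\,{\rm dm}_n(\xi)=\overline{\int\hat h(\xi)e^{ix\cdot\xi}\,{\rm dm}_n(\xi)}=\overline{h(x)}.
\]
Therefore
\[
\langle \hat f,\hat h\rangle=\int\hat f\,\overline{\hat h}\,{\rm dm}_n=\int f\,\overline{h}\,{\rm dm}_n=\langle f,h\rangle.
\]
In particular $\|\hat f\|_2=\|f\|_2$, so ${\mathscr{F}}$ is a linear isometry of ${\mathscr{S}}({\mathds{R}}^n)$ onto itself for the $L^2$ norm. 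Together with surjectivity from the inversion step, this proves the statement.
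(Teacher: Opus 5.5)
Your proof is correct and follows the paper's argument: the multiplication formula via Fubini, the self-dual Gaussian with dilation and $\lambda\to\infty$ to get inversion, then $g=\overline{\hat h}$ for the isometry. The only difference is that you reach the formula at a general $x$ directly, by modulating the test function with $e_x$, whereas the paper proves it at $0$ and then replaces $f$ by $\tau_{-x}f$; note also a small notational slip: with $\check g(x)=\hat g(-x)$ the identity should read $\check{\hat f}=f$ (alternatively, define $\check g(x)=g(-x)$ to get ${\mathscr{F}}(\check{\hat f})=f$).
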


\begin{proof}
Let $f,g\in{\mathscr{S}}({\mathds{R}}^n)$. By Fubini's theorem, it is immediate that
\begin{equation}
\label{eq:hatfg}
\int_{{\mathds{R}}^n}\hat{f} g\,{\rm dm}_n =\int_{{\mathds{R}}^n} f \hat{g}\,{\rm dm}_n.
\end{equation}
Let $\lambda>0$ and $\varphi$ be the function of Exercise~\ref{ex:gaussian}. We apply the previous identity with 
$f\in{\mathscr{S}}({\mathds{R}}^n)$ and $g(x)=\varphi\bigl(\tfrac{x}{\lambda}\bigr)$, $x\in {\mathds{R}}^n$. By Theorem~\ref{thm:Fourier1}
(point 4),
we have that
\begin{align*}
\int_{{\mathds{R}}^n} \hat{f}(x) \varphi\bigl(\tfrac{x}{\lambda}\bigr)\,{\rm dm}_n(x)
&=\int_{{\mathds{R}}^n} f(x) \lambda^n\varphi(\lambda x)\,{\rm dm}_n(x),\quad \mbox{\footnotesize since $\hat{\varphi}=\varphi$},\\
&=\int_{{\mathds{R}}^n} f\bigl(\tfrac{y}{\lambda}\bigr) \varphi(y)\,{\rm dm}_n(y), \quad \mbox{\footnotesize by the change of variables $y=\lambda x$}.
\end{align*}
The first term, as $\lambda$ goes to $\infty$, converges to $\int_{{\mathds{R}}^n} \hat{f}\,{\rm dm}_n$ and the last term
converges to $f(0)$. If we replace $f$ by $\tau_{-x}f$ for any $x\in {\mathds{R}}^n$, we obtain, by Theorem~\ref{thm:Fourier1} (point 1)
\[
f(x)=(\tau_{-x}f)(0)=\int_{{\mathds{R}}^n} \widehat{\tau_{-x}f}\,{\rm dm}_n = \int_{{\mathds{R}}^n} e^{ix\cdot\xi}\hat{f}(\xi)\,{\rm dm}_n(\xi).
\]
We proved that ${\mathscr{F}}$ is invertible and its inverse is given by
\begin{equation}
\label{eq:inverseF}
{\mathscr{F}}^{-1}(f)(x)= \int_{{\mathds{R}}^n} e^{ix\cdot\xi} f(\xi)\,{\rm dm}_n(\xi) = {\mathscr{F}}(f)(-x), \quad x\in{\mathds{R}}^n,
\end{equation}
which shows the first point of the theorem, the fact that the Fourier transform is an isomorphism of ${\mathscr{S}}({\mathds{R}}^n)$. It
remains to show that it is isometric for the $L^2$ norm or more precisely: $\langle {\mathscr{F}}(u),{\mathscr{F}}(v)\rangle =\langle u,v\rangle$
for all $u,v\in{\mathscr{S}}({\mathds{R}}^n)$.
We apply \eqref{eq:hatfg} with $f=u$ and $g=\overline{{\mathscr{F}}(v)}$ and we obtain
\[
\langle \hat{u} ,\hat{v} \rangle= \int_{{\mathds{R}}^n}\hat{f} g\,{\rm dm}_n=\int_{{\mathds{R}}^n} f \hat{g}\,{\rm dm}_n =\langle u,v\rangle
\]
since for all $\xi\in{\mathds{R}}^n$,
\[
\hat{g}(\xi)=\int_{{\mathds{R}}^n}e_{-\xi}\overline{{\mathscr{F}}(v)}\, dm_n
=\overline{\int_{{\mathds{R}}^n}e_{\xi}{\mathscr{F}}(v)\, dm_n}=\overline{{\mathscr{F}}^{-1}\bigl({\mathscr{F}}(v)\bigr)(\xi)}=\overline{v(\xi)}.
\]
which proves the isometry (with $u=v$, we have that $\|u\|_2=\|\hat{u}\|_2$).
\end{proof}

The space of {\tt tempered distributions} consists of linear functionals on ${\mathscr{S}}({\mathds{R}}^n)$ and is denoted by
${\mathscr{S}}'({\mathds{R}}^n)$. We can extend the Fourier transform to ${\mathscr{S}}'({\mathds{R}}^n)$ by the following relation:
for all $u\in {\mathscr{S}}'({\mathds{R}}^n)$,
\[
{\mathscr{F}}(u)(f)=u(\hat{f}), \quad f\in {\mathscr{S}}({\mathds{R}}^n). 
\]
The Fourier transform on ${\mathscr{S}}'({\mathds{R}}^n)$ is invertible and its inverse is given by 
\[
{\mathscr{F}}^{-1}(u)(f)=u({\mathscr{F}}^{-1}{f}), \quad f\in {\mathscr{S}}({\mathds{R}}^n). 
\]

\begin{exercise}
Let $u$ defined by $u(f)=f(0)$ for all $f\in{\mathscr{S}}({\mathds{R}}^n)$ (this $u$ is called the Dirac function at 0 and is denoted by $\delta_0$). 
\begin{enumerate}
\item
Show that $u\in {\mathscr{S}}'({\mathds{R}}^n)$ and calculate its Fourier transform.
\item
Let $n=1$.
Let $H$ be the Heaviside function: $H(t)=0$ if $t< 0$ and $H(t)=1$ if $t\ge 0$. Prove that $H'=\delta_0$ in ${\mathscr{S}}'({\mathds{R}})$ 
and determine ${\mathscr{F}}(H)$. 
\item
Let $a\in{\mathds{R}}^n$ and define $\delta_a(f)=f(a)$ for all $f\in{\mathscr{S}}({\mathds{R}}^n)$. What is ${\mathscr{F}}(\delta_a)$?
\end{enumerate}
\end{exercise}

\begin{remark}
Since ${\mathscr{D}}({\mathds{R}}^n):={\mathscr{C}}_c^\infty({\mathds{R}}^n)\subset {\mathscr{S}}({\mathds{R}}^n)$, tempered distributions
are also regular distributions: ${\mathscr{S}}'({\mathds{R}}^n) \subset {\mathscr{D}}'({\mathds{R}}^n)$. 
\end{remark}

\begin{proposition}
Let $u\in {\mathscr{S}}'({\mathds{R}}^n)$. 
\begin{enumerate}
\item
For all $k\in \{1,...,n\}$, $\partial_{x_k}u$ defined by $\partial_{x_k}(u)(f)=-u(\partial_{x_k}f)$ for all $f\in{\mathscr{S}}({\mathds{R}}^n)$
is a tempered distribution.
\item
For all $k\in \{1,...,n\}$, $x_k u$ defined by $(x_k u)(f)=u(x\mapsto x_kf(x))$ for all $f\in{\mathscr{S}}({\mathds{R}}^n)$
is a tempered distribution. Note that the notation $x_k u$ is slightly abusive.
\end{enumerate}
This implies that ${\mathscr{S}}'({\mathds{R}}^n)$ is invariant with respect of differentiations and multiplication by polynomials.
\end{proposition}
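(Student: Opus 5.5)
The plan is to reduce both claims to a single fact: the two operations $f\mapsto \partial_{x_k}f$ and $f\mapsto (x\mapsto x_kf(x))$ are linear maps from ${\mathscr{S}}({\mathds{R}}^n)$ to itself which are continuous for the topology given by the seminorms $p_{\alpha,\beta}$. A tempered distribution is understood here as a \emph{continuous} linear functional on ${\mathscr{S}}({\mathds{R}}^n)$. Once the two operations are known to be continuous, $\partial_{x_k}u=-u\circ\partial_{x_k}$ and $x_ku=u\circ M_k$ are compositions of continuous linear maps. Here $M_k f(x)=x_kf(x)$. Hence both belong to ${\mathscr{S}}'({\mathds{R}}^n)$. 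Linearity is immediate in both cases.

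\textbf{Step 1: stability of ${\mathscr{S}}$.} For $f\in{\mathscr{S}}({\mathds{R}}^n)$, the function $\partial_{x_k}f$ is smooth. Write $e_k$ for the $k$-th unit multi-index, so that $\partial_{x_k}=i\,D_{e_k}$. Then
\[
x^\alpha D_\beta(\partial_{x_k}f)=i\,x^\alpha D_{\beta+e_k}f,
\]
and therefore $p_{\alpha,\beta}(\partial_{x_k}f)=p_{\alpha,\beta+e_k}(f)<\infty$. For $M_kf$, the Leibniz rule gives
\[
D_\beta(x_kf)=x_kD_\beta f-i\beta_kD_{\beta-e_k}f,
\]
where the second term is absent if $\beta_k=0$. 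Consequently
\[
p_{\alpha,\beta}(M_kf)\le p_{\alpha+e_k,\beta}(f)+\beta_k\,p_{\alpha,\beta-e_k}(f).
\]

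\textbf{Step 2: continuity.} The estimates of Step~1 bound each seminorm of the image by finitely many seminorms of $f$. This is exactly continuity of a linear map between locally convex spaces whose topologies are defined by families of seminorms. Equivalently, in terms of the $p_N$ of the exercise, we have $p_N(\partial_{x_k}f)\le p_{N+1}(f)$ and $p_N(M_kf)\le C_N\,p_{N+1}(f)$. Next, continuity of $u$ means there exist $N$ and $C$ with
\[
|u(f)|\le C\,p_N(f)\quad\text{for all } f\in{\mathscr{S}}({\mathds{R}}^n).
\]
It follows that
\[
|\partial_{x_k}u(f)|\le C\,p_{N+1}(f)\quad\text{and}\quad |x_ku(f)|\le C\,C_N\,p_{N+1}(f),
\]
which proves the two points.

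\textbf{Step 3: the final sentence.} Iterating the two points, and using linearity, shows that $D^\alpha u$ and $P u$ lie in ${\mathscr{S}}'({\mathds{R}}^n)$ for every multi-index $\alpha$ and every polynomial $P$.

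The only step needing real care is the Leibniz computation for $M_k$, which must produce a bound purely in terms of seminorms of $f$. It is routine. If instead ${\mathscr{S}}'$ were read as \emph{all} linear functionals, there would be no obstacle at all, but continuity is the intended content.
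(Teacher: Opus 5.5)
Your proof is correct and follows the same idea as the paper, whose one-line argument is simply that ${\mathscr{S}}({\mathds{R}}^n)$ is stable under $\partial_{x_k}$ and under multiplication by $x_k$. Your seminorm bounds $p_N(\partial_{x_k}f)\le p_{N+1}(f)$ and $p_N(x_kf)\le C_N\,p_{N+1}(f)$ add the continuity check that the paper leaves implicit (the paper defines ${\mathscr{S}}'({\mathds{R}}^n)$ as linear functionals without stating continuity), so nothing is missing.
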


\begin{proof}
This is clear since ${\mathscr{S}}({\mathds{R}}^n)$ is invariant by taking derivatives or by multiplying by polynomials.
\end{proof}

\begin{exercise}
Prove \eqref{eq:diff-mult} (the action of the Fourier transform on derivatives or multiplication by polynomials)
in the case $f\in{\mathscr{S}}'({\mathds{R}}^n)$.
\end{exercise}

\subsection{The Fourier transform on $L^2$}

We have already seen that ${\mathscr{C}}_c^\infty({\mathds{R}}^n)\subset {\mathscr{S}}({\mathds{R}}^n)$ and we know that 
${\mathscr{C}}_c^\infty({\mathds{R}}^n)$ is dense in $L^2({\mathds{R}}^n)$ for the $L^2$ topology. Then ${\mathscr{S}}({\mathds{R}}^n)$
is also dense in $L^2({\mathds{R}}^n)$ for the $L^2$ topology. By Plancherel's theorem, it is immediate that we can
extend linearly and continuously the Fourier transform ${\mathscr{F}}$ to $L^2({\mathds{R}}^n)$; this extension is also an isometric
isomorphism. But be careful: the Fourier transform of an $L^2$ function which is not $L^1$ is not given by the integral \eqref{eq:defFourier}
(the integral is not convergent)!

For $f\in L^2({\mathds{R}}^n)$, we define its Fourier transform via the following procedure: let $(f_k)_{k\ge1}$ a sequence of 
functions in ${\mathscr{S}}({\mathds{R}}^n)$ approaching $f$ in $L^2$ in the sense $\|f_k-f\|_2\xrightarrow[k\to\infty]{}0$. 
Then $({\mathscr{F}}(f_k))_{k\ge 1}$ is a Cauchy sequence of functions in ${\mathscr{S}}({\mathds{R}}^n)$:
\[
\|{\mathscr{F}}(f_k)-{\mathscr{F}}(f_\ell)\|_2=\|f_k-f_\ell\|_2\xrightarrow[k,\ell\to\infty]{}0.
\] 
Since $L^2({\mathds{R}}^n)$ is complete, the sequence $({\mathscr{F}}(f_k))_{k\ge 1}$ converges to a function $g\in L^2({\mathds{R}}^n)$.
It remains to show that $g$ is independent of the choice of the approximating sequence $(f_k)_{k\ge1}$: assume that 
$(\tilde{f}_k)_{k\ge1}$ is another sequence in ${\mathscr{S}}({\mathds{R}}^n)$ approaching $f$ in the $L^2$ norm; we denote by
$h$ the limit of $({\mathscr{F}}(\tilde{f}_k))_{k\ge 1}$ in $L^2({\mathds{R}}^n)$. Then we have that
\begin{align*}
\|g-h\|_2&=\|g-{\mathscr{F}}(f_k)+{\mathscr{F}}(f_k)-{\mathscr{F}}(\tilde{f}_k)+{\mathscr{F}}(\tilde{f}_k)-h\|_2\\
&\le
\|g-{\mathscr{F}}(f_k)\|_2+\|{\mathscr{F}}(f_k)-{\mathscr{F}}(\tilde{f}_k)\|_2+\|{\mathscr{F}}(\tilde{f}_k)-h\|_2\\
&\le \|g-{\mathscr{F}}(f_k)\|_2+\|f_k-\tilde{f}_k\|_2+\|{\mathscr{F}}(\tilde{f}_k)-h\|_2, \quad 
\mbox{\footnotesize since ${\mathscr{F}}$ is isometric for the $L^2$ norm}\\
&\xrightarrow[k\to\infty]{}0 \quad \mbox{\footnotesize by the definition of $g$, $h$ and the fact that $(f_k)_{k\ge1}$ and $(\tilde{f}_k)_{k\ge1}$
both converge to $f$}.
\end{align*}
So $h=g$: the $L^2$ limit of the Fourier transforms of an approximating sequence consisting of Schwartz functions is independent 
of the choice of the sequence. Therefore, it is legitimate to define $g$ as the Fourier transform of $f$.

\begin{proposition}
The Fourier transform on $L^2({\mathds{R}}^n)$, still denoted by ${\mathscr{F}}$, is an isometric isomorphism.
\end{proposition}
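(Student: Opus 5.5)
The plan is to deduce everything from Plancherel's theorem on ${\mathscr{S}}({\mathds{R}}^n)$, the density of ${\mathscr{S}}({\mathds{R}}^n)$ in $L^2({\mathds{R}}^n)$, and the fact (just established) that the extension ${\mathscr{F}}f$ does not depend on the approximating sequence. Three things are needed: the extension is linear, it preserves the $L^2$ norm (hence scalar products), and it is bijective onto $L^2({\mathds{R}}^n)$ with bounded inverse.

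For linearity, take $f,g\in L^2({\mathds{R}}^n)$, $a,b\in{\mathds{C}}$, and approximating Schwartz sequences $f_k\to f$, $g_k\to g$. Then $af_k+bg_k\to af+bg$ in $L^2$, and ${\mathscr{F}}(af_k+bg_k)=a{\mathscr{F}}(f_k)+b{\mathscr{F}}(g_k)$. By independence of the sequence, the left side converges to ${\mathscr{F}}(af+bg)$, and the right side converges to $a{\mathscr{F}}f+b{\mathscr{F}}g$. For the isometry, continuity of the norm gives
\[
\|{\mathscr{F}}f\|_2=\lim_{k\to\infty}\|{\mathscr{F}}(f_k)\|_2=\lim_{k\to\infty}\|f_k\|_2=\|f\|_2 .
\]
The identity $\langle{\mathscr{F}}f,{\mathscr{F}}g\rangle=\langle f,g\rangle$ then follows by polarization, or by the same limit argument using continuity of the scalar product. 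In particular ${\mathscr{F}}$ is injective and continuous on $L^2({\mathds{R}}^n)$.

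Surjectivity is the only step needing an extra idea, since an isometry need not be onto. I would extend ${\mathscr{F}}^{-1}$ from \eqref{eq:inverseF} to $L^2({\mathds{R}}^n)$ by exactly the same density procedure. This works because ${\mathscr{F}}^{-1}f(x)={\mathscr{F}}f(-x)$ on ${\mathscr{S}}({\mathds{R}}^n)$, and $f\mapsto f(-\cdot)$ is an $L^2$ isometry of ${\mathscr{S}}({\mathds{R}}^n)$, so ${\mathscr{F}}^{-1}$ is also $L^2$-isometric there. Call the extension $G$. For $g\in L^2({\mathds{R}}^n)$, take Schwartz functions $g_k\to g$ and set $f_k={\mathscr{F}}^{-1}g_k\in{\mathscr{S}}({\mathds{R}}^n)$. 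Then $f_k\to Gg$ in $L^2$, and ${\mathscr{F}}(f_k)=g_k\to g$. By the definition of ${\mathscr{F}}$ on $L^2$, this gives ${\mathscr{F}}(Gg)=g$. Symmetrically, $G({\mathscr{F}}f)=f$, so ${\mathscr{F}}$ is bijective with inverse $G$.

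An alternative route to surjectivity avoids $G$: the range of an isometry from a complete space is closed, and it contains ${\mathscr{F}}({\mathscr{S}}({\mathds{R}}^n))={\mathscr{S}}({\mathds{R}}^n)$, which is dense. I expect no real obstacle beyond bookkeeping; the main care point is invoking the independence of the approximating sequence each time a limit is identified as ${\mathscr{F}}$ of something.
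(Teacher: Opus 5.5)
Your proof is correct and follows the route the paper sets up. The paper leaves this proposition as an exercise, after extending ${\mathscr{F}}$ from ${\mathscr{S}}({\mathds{R}}^n)$ by density, using Plancherel's theorem and the independence of the approximating sequence. Your arguments for linearity, norm preservation, and surjectivity (via the extension of ${\mathscr{F}}^{-1}$, or via a closed range containing the dense set ${\mathscr{F}}({\mathscr{S}}({\mathds{R}}^n))={\mathscr{S}}({\mathds{R}}^n)$) are exactly the intended completion of that exercise.
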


\begin{proof}
Exercise!
\end{proof}

\begin{exercise}
\begin{enumerate}
\item
Find an example of an $L^2$ function on ${\mathds{R}}^n$ that doesn't belong to $L^1$ (choose your favourite $n\ge 1$).
\item
How would you calculate its Fourier transform?
\end{enumerate}
\end{exercise}

\subsection{Sobolev spaces}

\begin{definition}
Let $s\in{\mathds{R}}$.
We define the {\tt homogeneous Sobolev space} $\dot H^s({\mathds{R}}^n)$ as follows:
\[
\dot H^s({\mathds{R}}^n):=\bigl\{f\in{\mathscr{S}}'({\mathds{R}}^n); {\mathscr{F}}(f)\in L^1_{\rm loc}({\mathds{R}}^n)\mbox{ and }
\xi\mapsto |\xi|^s{\mathscr{F}}(f)(\xi) \in L^2({\mathds{R}}^n)\bigr\}.
\]
We endow $\dot H^s({\mathds{R}}^n)$ with the norm
\[
\|f\|_{\dot H^s}:=\Bigl(\int_{{\mathds{R}}^n} |\xi|^{2s}\bigl|{\mathscr{F}}(f)(\xi)\bigr|^2{\rm d}\xi\Bigr)^{\frac{1}{2}},
\quad f\in \dot H^s({\mathds{R}}^n).
\]
\end{definition}

\begin{remark}
If $s=0$, we have that $\dot H^s({\mathds{R}}^n)=\dot H^0({\mathds{R}}^n)=L^2({\mathds{R}}^n)$.
\end{remark}

\begin{exercise}
Prove that $\|\cdot\|_{\dot H^s}$ is actually a norm on $\dot H^s({\mathds{R}}^n)$. It comes from a scalar product: can you describe it?
\end{exercise}

\begin{proposition}
Let $s_1,s_2,s\in{\mathds{R}}$ with $s_1<s<s_2$ and let $\theta\in (0,1)$ such that $s=(1-\theta) s_1+\theta s_2$.
Then $\dot H^{s_1}\cap \dot H^{s_2}\subset \dot H^s$ and for all $f\in \dot H^{s_1}\cap \dot H^{s_2}$ we have that
\[
\|f\|_{\dot H^s}\le \|f\|_{\dot H^{s_1}}^{1-\theta}\ \|f\|_{\dot H^{s_2}}^\theta.
\]
\end{proposition}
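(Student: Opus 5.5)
The plan is to apply Hölder's inequality directly on the Fourier side. Let $f\in \dot H^{s_1}\cap \dot H^{s_2}$. By definition ${\mathscr{F}}(f)\in L^1_{\rm loc}({\mathds{R}}^n)$, so membership in $\dot H^s$ only requires that $\xi\mapsto|\xi|^s{\mathscr{F}}(f)(\xi)$ lie in $L^2({\mathds{R}}^n)$. That will follow from the norm estimate. Since $s=(1-\theta)s_1+\theta s_2$, we can split pointwise, for almost every $\xi\neq 0$:
\[
|\xi|^{2s}\bigl|{\mathscr{F}}(f)(\xi)\bigr|^2=\Bigl(|\xi|^{2s_1}\bigl|{\mathscr{F}}(f)(\xi)\bigr|^2\Bigr)^{1-\theta}\Bigl(|\xi|^{2s_2}\bigl|{\mathscr{F}}(f)(\xi)\bigr|^2\Bigr)^{\theta}.
\]
Here we use $|{\mathscr{F}}(f)|^2=|{\mathscr{F}}(f)|^{2(1-\theta)}|{\mathscr{F}}(f)|^{2\theta}$. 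The point $\xi=0$ has measure zero and plays no role.

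Next, integrate over ${\mathds{R}}^n$ and apply Hölder's inequality with the conjugate exponents $p=\frac{1}{1-\theta}$ and $p'=\frac1\theta$. Both lie in $(1,\infty)$ because $\theta\in(0,1)$. Writing $F_j(\xi)=|\xi|^{2s_j}|{\mathscr{F}}(f)(\xi)|^2$, which are nonnegative integrable functions, we get
\[
\int_{{\mathds{R}}^n} F_1^{1-\theta}F_2^{\theta}\,{\rm d}\xi\le \Bigl(\int_{{\mathds{R}}^n}F_1\,{\rm d}\xi\Bigr)^{1-\theta}\Bigl(\int_{{\mathds{R}}^n}F_2\,{\rm d}\xi\Bigr)^{\theta}
=\|f\|_{\dot H^{s_1}}^{2(1-\theta)}\|f\|_{\dot H^{s_2}}^{2\theta}.
\]
The right-hand side is finite, so $|\xi|^s{\mathscr{F}}(f)\in L^2({\mathds{R}}^n)$ and therefore $f\in\dot H^s({\mathds{R}}^n)$. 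Taking square roots gives the stated inequality. Hölder's inequality holds for nonnegative measurable functions even without assuming finiteness in advance, so no separate integrability argument is needed.

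There is no real obstacle. The only points to check are that the exponents are conjugate, $(1-\theta)+\theta=1$, and that the definition of $\dot H^s$ is satisfied, which comes down to the local integrability of ${\mathscr{F}}(f)$ assumed in $\dot H^{s_1}$. The proof uses only the definition of the norm and Hölder's inequality, with the normalisation ${\rm d}\xi$ consistent throughout.
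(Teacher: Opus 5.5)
Your proof is correct and is essentially the paper's argument. The paper splits $|\xi|^s|{\mathscr{F}}f|$ as $(|\xi|^{s_1}|{\mathscr{F}}f|)^{1-\theta}(|\xi|^{s_2}|{\mathscr{F}}f|)^{\theta}$ and applies H\"older in the form $\|uv\|_2\le\|u\|_{2/(1-\theta)}\|v\|_{2/\theta}$, which is your H\"older step with exponents $\frac{1}{1-\theta},\frac{1}{\theta}$ applied to the squared quantities; you additionally check the $L^1_{\rm loc}$ requirement in the definition of $\dot H^s$, which the paper leaves implicit.
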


\begin{proof}
This comes directly from the identity
\[
|\xi|^s|{\mathscr{F}}f(\xi)| = (|\xi|^{s_1}|{\mathscr{F}}f(\xi)|)^{1-\theta}  (|\xi|^{s_1}|{\mathscr{F}}f(\xi)|)^\theta, \quad \xi\in {\mathds{R}}^n
\]
and the H\"older inequality $\|uv\|_2\le \|u\|_p\|v\|_q$ where $1<p,q<\infty$ satisfy $\frac{1}{p}+\frac{1}{q}=\frac{1}{2}$, 
applied to $p=\frac{2}{1-\theta}$,
$u=(|\cdot|^{s_1}|{\mathscr{F}}f|)^{1-\theta}$, $q=\frac{2}{\theta}$ and $v=(|\cdot|^{s_2}|{\mathscr{F}}f|)^{\theta}$
\end{proof}

\begin{proposition}
The homogeneous Sobolev space $\dot H^s({\mathds{R}}^n)$ with the norm $\|\cdot\|_{\dot H^s}$ is complete if and only if $s<\frac{n}{2}$.
\end{proposition}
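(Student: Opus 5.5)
The plan is to identify $\dot H^s({\mathds{R}}^n)$, via ${\mathscr{F}}$, with a subspace of the weighted space $L^2({\mathds{R}}^n, |\xi|^{2s}{\rm d}\xi)$. Write $X_s:=\{g\in L^1_{\rm loc}({\mathds{R}}^n);\ |\cdot|^s g\in L^2\}$ and $W_s:=L^2(|\xi|^{2s}{\rm d}\xi)$, which is a Hilbert space. Since ${\mathscr{F}}$ is a bijection on ${\mathscr{S}}'({\mathds{R}}^n)$, the space $\dot H^s$ is isometric to the set of $g\in X_s$ that also define tempered distributions. The question therefore reduces to whether this set is closed in $W_s$, and in fact whether it equals $W_s$.

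\emph{Case $s<\frac n2$.} Let $g\in W_s$. First, $g\in L^1_{\rm loc}$: on the unit ball, Cauchy--Schwarz gives
\[
\int_{|\xi|\le1}|g| \le \Bigl(\int |\xi|^{2s}|g|^2\Bigr)^{1/2}\Bigl(\int_{|\xi|\le 1}|\xi|^{-2s}{\rm d}\xi\Bigr)^{1/2},
\]
and the last integral is finite exactly because $2s<n$. Away from $0$, $|\xi|^{-2s}$ is bounded on compact sets, so the same argument works there. Second, $g$ is tempered: for $\varphi\in{\mathscr{S}}$, split $\int g\varphi$ into $|\xi|\le1$ and $|\xi|\ge1$. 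The first piece is bounded by the estimate above times $\sup|\varphi|$. The second is bounded by $\|g\|_{W_s}\bigl(\int_{|\xi|\ge1}|\xi|^{-2s}(1+|\xi|)^{-2N}\bigr)^{1/2}p_N(\varphi)$ with $N$ large. Hence $g$ is a tempered distribution continuously depending on $g$, so $\dot H^s={\mathscr{F}}^{-1}(W_s)$ isometrically, and completeness follows from completeness of $W_s$. One caveat must be checked for $s\le0$: the $L^2$-type weight near infinity still gives polynomial control, since $|\xi|^{-2s}$ is a polynomial weight, so this case causes no problem.

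\emph{Case $s\ge\frac n2$.} Here I would build a Cauchy sequence in $\dot H^s$ with no limit. Take $g(\xi)=|\xi|^{-s-\frac n2}\bigl(\log\frac{1}{|\xi|}\bigr)^{-1}\mathds{1}_{|\xi|\le 1/2}$, so that
\[
|\xi|^{2s}|g|^2=|\xi|^{-n}\bigl(\log\tfrac1{|\xi|}\bigr)^{-2}
\]
is integrable near $0$, while $g\notin L^1_{\rm loc}$ since $|\xi|^{-s-n/2}/\log$ with $s+\frac n2\ge n$ is non-integrable. Set $g_k=g\mathds{1}_{|\xi|\ge 1/k}$. Each $g_k$ lies in $L^1\cap L^2$ with compact support, so $f_k={\mathscr{F}}^{-1}g_k\in\dot H^s$, and $(f_k)$ is Cauchy because $g_k\to g$ in $W_s$. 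Suppose $f_k\to f$ in $\dot H^s$. Then $\hat f\in L^1_{\rm loc}$ and $|\cdot|^s(\hat f-g_k)\to0$ in $L^2$, so $\hat f=g$ a.e. (on ${\mathds{R}}^n\setminus\{0\}$, where the weight is positive, hence a.e.). This contradicts $g\notin L^1_{\rm loc}$.

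The main obstacle is this last identification step: one has to make sure that convergence in the $\dot H^s$ norm identifies $\hat f$ pointwise a.e. This works because $|\xi|^{2s}{\rm d}\xi$ and ${\rm d}\xi$ have the same null sets. It also requires choosing the logarithmic correction so that $g$ is simultaneously in $W_s$ and not locally integrable, which needs care in the borderline case $s=\frac n2$ (the log factor handles it).
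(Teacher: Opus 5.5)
Your proof is correct. For $s<\frac n2$ it is essentially the paper's argument. The paper takes a Cauchy sequence and passes to its limit $f$ in $L^2(|\xi|^{2s}{\rm d}\xi)$. It then checks that $f$ is locally integrable, by Cauchy--Schwarz on $B_n(0,1)$ using $\int_{B_n(0,1)}|\xi|^{-2s}{\rm d}\xi<\infty$, and that $f$ is tempered, via a polynomial weight outside the ball. You make the same two checks but phrase them as surjectivity of ${\mathscr{F}}:\dot H^s\to W_s$.

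For $s\ge\frac n2$ your route is genuinely different.

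\textbf{The paper's route.} It introduces the auxiliary norm $N(u)=\|\hat u\|_{L^1(B_n(0,1))}+\|u\|_{\dot H^s}$ and leaves the completeness of $(\dot H^s,N)$ as an exercise. By the bounded inverse theorem, completeness of $\|\cdot\|_{\dot H^s}$ would then force $\|\hat u\|_{L^1(B_n(0,1))}\lesssim\|u\|_{\dot H^s}$. It refutes this with the dyadic sums $\sum_{k\le N}\frac1k\,2^{k(s+\frac n2)}{\mathds{1}}_{2^{-k}{\mathcal{A}}}$.

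\textbf{Your route.} Your profile $|\xi|^{-s-\frac n2}\bigl(\log\frac1{|\xi|}\bigr)^{-1}$ is the continuous version of that dyadic sum. On $2^{-k}{\mathcal{A}}$ one has $|\xi|^{-s-\frac n2}\approx 2^{k(s+\frac n2)}$ and $\log\frac1{|\xi|}\approx k$. You use the profile directly:
\begin{itemize}
\item the truncations $g_k$ are Cauchy in $W_s$;
\item any $\dot H^s$-limit $f$ must satisfy $\hat f=g$ a.e., since limits in $W_s$ are unique and the weight vanishes only at the origin;
\item $g\notin L^1_{\rm loc}$, which contradicts $\hat f\in L^1_{\rm loc}$.
\end{itemize}

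\textbf{Comparison.} Your argument needs neither the open mapping theorem nor the completeness of $(\dot H^s,N)$, and it exhibits the missing limit explicitly. The paper's version instead gives a quantitative statement: the $\dot H^s$ norm cannot control the $L^1$ mass of $\hat u$ near $\xi=0$. This is the same obstruction, seen from the side of norms rather than sequences.
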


\begin{proof}
Assume that $s<\frac{n}{2}$. Let $(u_k)_{k\in{\mathds{N}}}$ be a Cauchy sequence in $\dot H^s({\mathds{R}}^n)$. Then 
$(\widehat{u_k})_{k\in{\mathds{N}}}$ is a Cauchy sequence in $L^2({\mathds{R}}^n, |\xi|^{2s}{\rm d}\xi)$, which is complete. 
This implies that there exists $f\in L^2({\mathds{R}}^n, |\xi|^{2s}{\rm d}\xi)$ such that $(\widehat{u_k})_{k\in{\mathds{N}}}$ converges to $f$. 
Since
\[
\int_{B_n(0,1)}|\xi|^{-2s}\,{\rm d}\xi = \int_{{\mathds{S}}^{n-1}}\int_0^1 r^{-2s}r^{n-1}\,{\rm d}r\,{\rm d}\omega =c_n<\infty
\]
we have that
\begin{align*}
\int_{B_n(0,1)}&|f(\xi)|\,{\rm d}\xi = \int_{B_n(0,1)}|\xi|^{-s}|\xi|^s|f(\xi)|\,{\rm d}\xi\\
&\le \Big(\int_{B_n(0,1)}|\xi|^{-2s}\,{\rm d}\xi\Big)^{\frac{1}{2}} \Big(\int_{B_n(0,1)}|\xi|^{2s}|f(\xi)|^2\,{\rm d}\xi\Big)^{\frac{1}{2}} 
\quad\mbox{\footnotesize by Cauchy-Schwarz inequality}\\
&\le c_n\ \Big(\int_{{\mathds{R}}^n}|\xi|^{2s}|f(\xi)|^2\,{\rm d}\xi\Big)^{\frac{1}{2}} \le c_n\,\|f\|_{\dot H^s}. 
\end{align*}
This proves that $f 1\hspace{-2pt}{\rm l}_{B_n(0,1)}\in L^1({\mathds{R}}^n)$ and therefore 
${\mathscr{F}}^{-1}(f 1\hspace{-2pt}{\rm l}_{B_n(0,1)})$ is a bounded continuous function by the Riemann-Lebesgue Lemma.
We also have that for all $\xi \in {\mathds{R}}^n$ with $|\xi|\ge 1$:
\begin{itemize}
\item
if $s\ge 0$, then $(1+|\xi|^2)^{\frac{s}{2}}\le 2^{\frac{s}{2}} |\xi|^s$,
\item
if $s<0$, then $\frac{1}{(1+|\xi|^2)^{-\frac{s}{2}}} \le \frac{1}{(|\xi|^2)^{-\frac{s}{2}}} =|\xi|^s$.
\end{itemize}
This shows that
$\xi\mapsto (1+|\xi|^2)^{\frac{s}{2}}\,f(\xi) 1\hspace{-2pt}{\rm l}_{B_n(0,1)^c}(\xi) \in L^2({\mathds{R}}^n)$, so that 
$f 1\hspace{-2pt}{\rm l}_{B_n(0,1)^c}$ belongs to ${\mathscr{S}}'({\mathds{R}}^n)$. Then 
$u:={\mathscr{F}}^{-1}f\in {\mathscr{S}}'({\mathds{R}}^n)$ satisfies $\hat{u}\in \dot H^s({\mathds{R}}^n)$ by the definition of $f$ and
\[
\|u_k-u\|_{\dot H^s}=\Bigl(\int_{{\mathds{R}}^n} |\xi|^{2s}|\widehat{u_k}(\xi)-\hat{u}(\xi)|^2\,{\rm d}\xi\Bigr)^{\frac{1}{2}}
=\Bigl(\int_{{\mathds{R}}^n} |\xi|^{2s}|\widehat{u_k}(\xi)-f(\xi)|^2\,{\rm d}\xi\Bigr)^{\frac{1}{2}}\xrightarrow[k\to\infty]{}0.
\]
Let now $s\ge \frac{n}{2}$. Consider $N:\dot H^s\ni u\mapsto \|\hat{u}\|_{L^1(B_n(0,1))}+\|u\|_{\dot H^s}$. We can prove that
$N$ is a norm on $\dot H^s({\mathds{R}}^n)$ and $(\dot H^s({\mathds{R}}^n), N)$ is a complete space (so a Banach space): exercise!
{\sl Hint:} proceed as in the proof in the case $s<\frac{n}{2}$.

If $(\dot H^s({\mathds{R}}^n), \|\cdot\|_{\dot H^s})$ were also complete, then there would exist a constant $c>0$ such that 
$N\le c \,\|\cdot\|_{\dot H^s}$ (Exercise: explain why!), or equivalently,  $\|\hat{\cdot}\|_{L^1(B_n(0,1))}\le c'\, \|\cdot\|_{\dot H^s}$
for a constant $c'$. 

\noindent
Let ${\mathcal{A}}:=B(0,\frac{1}{4})\setminus \overline{B}(0,\frac{1}{6})$ be an annulus so that 
$2{\mathcal{A}}=B(0,\frac{1}{2})\setminus \overline{B}(0,\frac{1}{3})$ does not intersect ${\mathcal{A}}$. Define
\[
u_N:={\mathscr{F}}^{-1}\Bigl(\sum_{k=1}^N \tfrac{1}{k}\,2^{k(s+\frac{n}{2})} {\mathds{1}}_{2^{-k}{\mathcal{A}}}\Bigr),
\quad N\ge 1.
\]
Then 
\[
\|\widehat{u_N}\|_{L^1(B_n(0,1)} = \sum_{k=1}^N \tfrac{1}{k}\,2^{k(s+\frac{n}{2})} \|{\mathds{1}}_{2^{-k}{\mathcal{A}}}\|_{L^1(B_n(0,1))}
=C\, \sum_{k=1}^N \tfrac{1}{k}\,2^{k(s-\frac{n}{2})}
\]
(the constant $C$ is clearly related to the volume of ${\mathcal{A}}$) and (calculate!)
\[
\|u_N\|_{\dot H^s} \le \tilde{C}\, \sum_{k=1}^N \frac{1}{k^2},
\]
which contradicts the existence of a constant $c'$ as above (explain why!). This last example is taken from \cite[Prop.~1.34]{BCD11}.
\end{proof}

\begin{proposition}[Sobolev embedding]
Let $0<s<\frac{n}{2}$.
The following embedding property (in dimension $n$) holds
\begin{equation}
\label{eq:sobolev-emb}
\dot H^s\hookrightarrow L^q,\quad \mbox{for }\tfrac{1}{q}=\tfrac{1}{2}-\tfrac{s}{n}.
\end{equation}
\end{proposition}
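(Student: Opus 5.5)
We prove \eqref{eq:sobolev-emb} as the estimate $\|f\|_q\le C\|f\|_{\dot H^s}$, first for $f\in{\mathscr{S}}({\mathds{R}}^n)$ with $\hat f$ vanishing near $0$. We then extend by density, using the completeness of $\dot H^s$ for $s<\frac n2$ (previous proposition). We use only the tools available so far: Plancherel and Riemann--Lebesgue ($\|\mathscr F^{-1}g\|_\infty\le\|g\|_1$). Rather than a Hardy--Littlewood--Sobolev argument, we give the frequency-splitting proof (as in \cite{BCD11}) through the layer-cake formula
\[
\|f\|_q^q = q\int_0^\infty \lambda^{q-1}\,\bigl|\{|f|>\lambda\}\bigr|\,{\rm d}\lambda .
\]
Normalize $\|f\|_{\dot H^s}=1$.

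\textbf{Step 1 (splitting).} For each level $\lambda>0$ choose a radius $A_\lambda>0$ and write $f=f_{1}+f_{2}$ with $\hat f_1=\hat f\,{\mathds{1}}_{B(0,A_\lambda)}$ and $\hat f_2=\hat f\,{\mathds{1}}_{B(0,A_\lambda)^c}$. The low-frequency part is bounded pointwise by Riemann--Lebesgue and Cauchy--Schwarz, exactly as in the completeness proof:
\[
\|f_1\|_\infty\le \int_{B(0,A_\lambda)}|\xi|^{-s}|\xi|^s|\hat f|\,{\rm d}\xi\le C\,A_\lambda^{\frac n2-s}.
\]
This is finite precisely because $s<\frac n2$. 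Choose $A_\lambda$ so that $C A_\lambda^{\frac n2-s}=\lambda/2$, that is, $A_\lambda=c\,\lambda^{q/n}$, since $\frac n2-s=\frac nq$. Then $\{|f|>\lambda\}\subset\{|f_2|>\lambda/2\}$.

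\textbf{Step 2 (Chebyshev on the high part).} By Chebyshev and Plancherel,
\[
|\{|f_2|>\lambda/2\}|\le 4\lambda^{-2}\|f_2\|_2^2=4\lambda^{-2}\int_{|\xi|\ge A_\lambda}|\hat f(\xi)|^2\,{\rm d}\xi .
\]
Insert this into the layer-cake formula and apply Fubini (Tonelli) to obtain
\[
\|f\|_q^q\le 4q\int_{{\mathds{R}}^n}|\hat f(\xi)|^2\int_0^{\Lambda(\xi)}\lambda^{q-3}\,{\rm d}\lambda\,{\rm d}\xi ,
\]
where $\Lambda(\xi)$ is defined by $A_{\Lambda}=|\xi|$, so $\Lambda(\xi)=c'|\xi|^{n/q}$. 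Since $q>2$, the inner integral converges at $0$ and equals $C\,|\xi|^{\frac nq(q-2)}$. Now $\frac nq(q-2)=n-\frac{2n}{q}=2s$. Hence $\|f\|_q^q\le C\int|\xi|^{2s}|\hat f|^2=C$, and by homogeneity $\|f\|_q\le C\|f\|_{\dot H^s}$.

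\textbf{Step 3 (density).} Functions $f\in{\mathscr{S}}$ with $\hat f$ compactly supported away from $0$ are dense in $\dot H^s$, since $C_c^\infty({\mathds{R}}^n\setminus\{0\})$ is dense in $L^2(|\xi|^{2s}{\rm d}\xi)$. A Cauchy sequence of such functions in $\dot H^s$ is then Cauchy in $L^q$. Its $L^q$ limit and its $\dot H^s$ limit coincide as tempered distributions. Indeed, the low-frequency parts converge uniformly by the $L^1(B(0,1))$ bound from the completeness proof, and the high-frequency parts converge in $L^2$; both modes of convergence imply convergence in ${\mathscr{S}}'$.

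The main obstacle is Step 3: the meaning of ``$f\in L^q$'' for a general $f\in\dot H^s$, which is only defined as a tempered distribution modulo nothing (no polynomials can appear since $\hat f\in L^1_{\rm loc}$). One must check that the identification of limits is legitimate. The $\lambda$-dependent choice of cut-off in Step 1, and the bookkeeping of exponents in Step 2, are the heart of the estimate but are routine once set up.
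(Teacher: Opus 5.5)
Your proof is correct and is essentially the paper's argument (due to Chemin): you split the frequencies at $A_\lambda\sim\lambda^{q/n}$, bound the low-frequency part in $L^\infty$ by Riemann--Lebesgue and Cauchy--Schwarz, bound the high-frequency part by Chebyshev and Plancherel, then combine the layer-cake formula and Tonelli to recover $\int|\xi|^{2s}|\hat f|^2$. The one difference is your density step, which the paper skips by applying the splitting directly to an arbitrary $f\in\dot H^s$. The splitting itself already shows that such an $f$ lies in $L^\infty+L^2$ and is a genuine function, so your Step 3 is harmless but not needed.
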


A very elegant proof of \eqref{eq:sobolev-emb}, due to Jean-Yves Chemin is reproduced here (\cite[Lemma~1.1]{Da20}). 
See also \cite[Thm~1.38]{BCD11}.

\begin{proof}
We prove that $\dot H^s\hookrightarrow L^q$ for $\frac{1}{q}=\frac{1}{2}-\frac{s}{n}$ in the case $0<s<\frac{n}{2}$.
Remark that necessarily, $q>2$. Let $f\in \dot H^s$. A simple homogeneity argument shows that we can
assume that $\|f\|_{\dot H^s}=1$ (explain!).
Let $A>0$ be fixed. We decompose $g={\mathscr{F}}(f)$ at the level $A$: $g=g_1+g_2$ where $g_1= {\mathds{1}}_{B(0,A)}{\mathscr{F}}(f)$ and 
$g_2= {\mathds{1}}_{B(0,A)^c}{\mathscr{F}}(f)$. 

It is clear that $g_1\in L^1$, since ${\mathscr{F}}(f)\in L^1_{\rm loc}$ and $g_1$ is the restriction on a ball of radius $A$.
The Fourier transform maps $L^1({\mathds{R}}^n)$ to ${\mathscr{C}}_0({\mathds{R}}^n)$ as well as its inverse, 
then $f_1:={\mathscr{F}}^{-1}(g_1)$ is bounded and
\begin{align*}
\|f_1\|_\infty\le \|g_1\|_1&= \int_{B(0,A)}|\xi|^{-s}|\xi|^s|{\mathscr{F}}(f)(\xi)|\,{\rm dm}_n(\xi)\\
&\le c \|f\|_{\dot H^s}\Bigl(\int_{B(0,A)}|\xi|^{-2s}\,{\rm dm}_n(\xi)\Bigr)^{1/2}
\le C A^{\frac{n}{2}-s},
\end{align*}
where the constant $C$ depends on $n$; the last but one inequality is due to Cauchy-Schwarz inequality. 

We now prove that $g_2$ belongs to $L^2({\mathds{R}}^n)$:
\[
\|g_2\|_2^2=\int_{{\mathds{R}}^n}\bigl|{\mathds{1}}_{B(0,A)^c}{\mathscr{F}}(f)\bigr|^2\,{\rm dm}_n
\le \int_{{\mathds{R}}^n} \bigl(\tfrac{|\xi|}{A}\bigr)^{2s}|{\mathscr{F}}(f)(\xi)|^2\,{\rm dm}_n(\xi)\le A^{-2s}\|f\|_{\dot H^s}^2.
\]
The Fourier transform being an isometry on $L^2({\mathds{R}}^n)$, $f_2={\mathscr{F}}^{-1}(g_2)$ belongs to $L^2({\mathds{R}}^n)$.

For $\lambda>0$, we choose $A_\lambda=\bigl(\frac{\lambda}{4C}\bigr)^{q/n}$:
$f=f_{1,\lambda}+f_{2,\lambda}$ corresponding to the decomposition at the level $A_\lambda$. By the choice of $A_\lambda$,
we have that $\|f_{1,\lambda}\|_\infty\le \frac{\lambda}{4}$ and then
\[
\bigl\{|f_{1,\lambda}|>\tfrac{\lambda}{2}\bigr\}=\emptyset.
\]
Now, since $\bigl\{|f|>\lambda\bigr\}\subset \bigl\{|f_{1,\lambda}|>\tfrac{\lambda}{2}\bigr\}\cup \bigl\{|f_{2,\lambda}|>\tfrac{\lambda}{2}\bigr\}$, 
it is immediate that
\[
\|f\|_q^q=q\int_0^\infty\lambda^{q-1}\bigl|\{|f_{2,\lambda}|>\tfrac{\lambda}{2}\}\bigr|\,{\rm d}\lambda,
\]
where we used the following layer cake formula:
\[
\|f\|_q^q=q\int_0^\infty\lambda^{q-1}\bigl|\{|f|>\lambda\}\bigr|\,{\rm d}\lambda.
\]
Writing $\bigl|\{|f_{2,\lambda}|>\tfrac{\lambda}{2}\}\bigr| =\int_{\{|f_{2,\lambda}|>\frac{\lambda}{2}\}}\,{\rm dm}_n(x)$ and using
Beinaym\'e-Tchebychev inequality, we find that
\[
\bigl|\{|f_{2,\lambda}|>\tfrac{\lambda}{2}\}\bigr| 
\le \int_{\{|f_{2,\lambda}|>\frac{\lambda}{2}\}}\Bigl(\tfrac{2|f_{2,\lambda}|}{\lambda}\Bigr)^2\,{\rm dm}_m(x)
\le \tfrac{4}{\lambda^2}\|f_{2,\lambda}\|_2^2.
\]
Since the Fourier transform is an isometry in $L^2$, this gives, using Fubini,
\begin{align*}
\|f\|_q^q&\lesssim \int_0^\infty \lambda^{q-3}\Bigl(\int_{|\xi|>A_\lambda}|{\mathscr{F}}(f)(\xi)|^2\,{\rm dm}_n(\xi) \Bigr)\,{\rm d}\lambda \\
&\lesssim \int_{{\mathds{R}}^n}\Bigl(\int_0^{4C|\xi|^{n/q}}\lambda^{q-3}\,{\rm d}\lambda\Bigr)|{\mathscr{F}}(f)(\xi)|^2\,{\rm dm}_n(\xi)
\lesssim \int_{{\mathds{R}}^n} |\xi|^{\frac{n(q-2)}{q}}|{\mathscr{F}}(f)(\xi)|^2\,{\rm dm}_n(\xi).
\end{align*}
Remark that $\frac{n(q-2)}{q}=2s$: the last term in the series of inequalities above is then controlled by $\|f\|_{\dot H^s}=1$. 
This completes the proof.
\end{proof}

\begin{definition}
\label{def:sobolev}
Let $s\in {\mathds{R}}$ and define the (non homogeneous) {\tt Sobolev space} $H^s({\mathds{R}}^n)$ by 
\[
H^s({\mathds{R}}^n):=\bigl\{u\in{\mathscr{S}}'({\mathds{R}}^n);\xi\mapsto (1+|\xi|^2)^{\frac{s}{2}}{\mathscr{F}}(u)(\xi)\in L^2({\mathds{R}}^n)\bigr\}
\]
endowed with the norm
\[
\|u\|_{H^s}:= \bigl\|(1+|\cdot|^2)^{\frac{s}{2}}{\mathscr{F}}(u)\|_2.
\]
\end{definition}

\begin{proposition}
For all $s_1,s_2\in{\mathds{R}}$, if $s_1\le s_2$, then $H^{s_2}({\mathds{R}}^n)\subset H^{s_1}({\mathds{R}}^n)$.
\end{proposition}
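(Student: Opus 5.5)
The plan is to compare the two weights pointwise in frequency and then integrate, with no further machinery needed. Let $s_1\le s_2$ and $u\in H^{s_2}({\mathds{R}}^n)$. By Definition~\ref{def:sobolev}, $u\in{\mathscr{S}}'({\mathds{R}}^n)$ and $\xi\mapsto(1+|\xi|^2)^{\frac{s_2}{2}}{\mathscr{F}}(u)(\xi)$ belongs to $L^2({\mathds{R}}^n)$. In particular ${\mathscr{F}}(u)$ is a measurable function, since the weight is a smooth positive function and we may divide by it.

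The key step is an elementary inequality. For every $\xi\in{\mathds{R}}^n$ we have $1+|\xi|^2\ge 1$. Since $t\mapsto a^{t}$ is nondecreasing for $a\ge1$, and $\frac{s_1}{2}\le\frac{s_2}{2}$, this gives
\[
(1+|\xi|^2)^{\frac{s_1}{2}}\le (1+|\xi|^2)^{\frac{s_2}{2}}, \quad \xi\in{\mathds{R}}^n.
\]
This holds whatever the signs of $s_1$ and $s_2$ are, so no case distinction is needed. Multiplying by $|{\mathscr{F}}(u)(\xi)|$ and squaring yields the pointwise bound
\[
(1+|\xi|^2)^{s_1}|{\mathscr{F}}(u)(\xi)|^2\le (1+|\xi|^2)^{s_2}|{\mathscr{F}}(u)(\xi)|^2 .
\]

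Integrating this over ${\mathds{R}}^n$ gives $\xi\mapsto(1+|\xi|^2)^{\frac{s_1}{2}}{\mathscr{F}}(u)(\xi)\in L^2({\mathds{R}}^n)$, hence $u\in H^{s_1}({\mathds{R}}^n)$. It also gives the quantitative statement $\|u\|_{H^{s_1}}\le\|u\|_{H^{s_2}}$, so the inclusion is in fact a continuous embedding of norm at most $1$.

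There is no real obstacle. The only point needing care is to note that membership in $H^s({\mathds{R}}^n)$ forces ${\mathscr{F}}(u)$ to be a function, not merely a tempered distribution, so the pointwise comparison makes sense. Unlike the homogeneous case, the weight $(1+|\xi|^2)^{s/2}$ never vanishes and is never singular at $\xi=0$, so there is no low-frequency issue of the kind met in the completeness proof for $\dot H^s$.
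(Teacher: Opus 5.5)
Your proof is correct and is essentially the paper's argument. The paper uses the same pointwise weight comparison, written as $(1+|\xi|^2)^{\frac{s_1-s_2}{2}}\le 1$, and concludes directly; your version also makes explicit the bound $\|u\|_{H^{s_1}}\le\|u\|_{H^{s_2}}$.
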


\begin{proof}
This follows directly from the fact that $(1+|\xi|^2)^{\frac{s_1-s_2}{2}}\le 1$ for all $\xi \in {\mathds{R}}^n$ (since $s_1-s_2\le 0$).
\end{proof}

\begin{exercise}
If $s\ge 0$, prove that $H^s({\mathds{R}}^n)=\dot H^s({\mathds{R}}^n)\cap L^2({\mathds{R}}^n)$.
\end{exercise}

\begin{proposition}
For all $s\in{\mathds{R}}$, $H^s({\mathds{R}}^n)$ is a Hilbert space.
\end{proposition}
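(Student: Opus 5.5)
The plan is to identify $H^s({\mathds{R}}^n)$ isometrically with the weighted space $L^2({\mathds{R}}^n,(1+|\xi|^2)^s{\rm d}\xi)$ through the Fourier transform. Completeness of $H^s$ then reduces to completeness of that weighted $L^2$ space, in the same spirit as the first half of the proof that $\dot H^s$ is complete for $s<\frac n2$. The point is that the weight $(1+|\xi|^2)^s$ never vanishes and has no singularity at the origin, so the local integrability obstruction met in the homogeneous case disappears.

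First, define the inner product
\[
\langle u,v\rangle_{H^s}:=\int_{{\mathds{R}}^n}(1+|\xi|^2)^{s}\,{\mathscr{F}}(u)(\xi)\,\overline{{\mathscr{F}}(v)(\xi)}\,{\rm d}\xi, \quad u,v\in H^s({\mathds{R}}^n).
\]
The integral converges by Cauchy--Schwarz, since both $(1+|\cdot|^2)^{s/2}{\mathscr{F}}(u)$ and $(1+|\cdot|^2)^{s/2}{\mathscr{F}}(v)$ lie in $L^2$. Sesquilinearity and hermitian symmetry are immediate, and $\langle u,u\rangle_{H^s}=\|u\|_{H^s}^2$. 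For definiteness, if $\|u\|_{H^s}=0$ then ${\mathscr{F}}(u)=0$ almost everywhere, hence ${\mathscr{F}}(u)=0$ in ${\mathscr{S}}'({\mathds{R}}^n)$, and $u=0$ because ${\mathscr{F}}$ is invertible on ${\mathscr{S}}'({\mathds{R}}^n)$.

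Second, for completeness, let $(u_k)_k$ be Cauchy in $H^s({\mathds{R}}^n)$. Then $g_k:=(1+|\cdot|^2)^{s/2}{\mathscr{F}}(u_k)$ is Cauchy in $L^2({\mathds{R}}^n)$, so it converges to some $g\in L^2({\mathds{R}}^n)$. Set $f:=(1+|\cdot|^2)^{-s/2}g$. The key check, and the only mildly delicate step, is that $f$ defines a tempered distribution. For $\varphi\in{\mathscr{S}}({\mathds{R}}^n)$, Cauchy--Schwarz gives
\[
\Bigl|\int f\varphi\Bigr|\le \|g\|_2\,\bigl\|(1+|\cdot|^2)^{-s/2}\varphi\bigr\|_2,
\]
and the last norm is bounded by a constant times a seminorm $p_N(\varphi)$ with $N$ large, depending on $s$ and $n$. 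This holds because $(1+|\xi|^2)^{-s/2}$ grows at most polynomially and $(1+|\xi|)^{-n-1}$ is square integrable. Hence $f\in{\mathscr{S}}'({\mathds{R}}^n)$, and since ${\mathscr{F}}$ is invertible on ${\mathscr{S}}'$ we may set $u:={\mathscr{F}}^{-1}f\in{\mathscr{S}}'({\mathds{R}}^n)$. By construction $(1+|\cdot|^2)^{s/2}{\mathscr{F}}(u)=g\in L^2$, so $u\in H^s({\mathds{R}}^n)$, and $\|u_k-u\|_{H^s}=\|g_k-g\|_2\to0$.

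This shows that $H^s({\mathds{R}}^n)$ is a Hilbert space. The main point to handle carefully is the tempered-distribution membership of the limit $f$, which is settled by the polynomial bound above. Unlike the homogeneous case, no local $L^1$ condition near $\xi=0$ is needed, because $(1+|\xi|^2)^{-s/2}$ is bounded near the origin for every $s$.
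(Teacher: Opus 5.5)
Your proof is correct and is essentially the paper's argument. A Cauchy sequence $(u_k)$ in $H^s({\mathds{R}}^n)$ makes $(1+|\cdot|^2)^{s/2}{\mathscr{F}}(u_k)$ Cauchy in $L^2$, and its limit $g$ gives the $H^s$-limit $u={\mathscr{F}}^{-1}\bigl((1+|\cdot|^2)^{-s/2}g\bigr)$. You also check that $(1+|\cdot|^2)^{-s/2}g$ is a tempered distribution and that the inner product is definite, which the paper only asserts in passing or leaves as an exercise.
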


\begin{proof}
It suffices to show that $H^s({\mathds{R}}^n)$ is complete, since
\[
(f,g)\mapsto \int_{{\mathds{R}}^n} (1+|\xi|^2)^s \hat f(\xi)\overline{\hat g(\xi)}\,{\rm dm}_n(\xi), \quad f,g\in H^s({\mathds{R}}^n)
\]
is a scalar product on $H^s({\mathds{R}}^n)$ (exercise: prove it!). Now, let $(f_k)_{k\in{\mathds{N}}}$ be a Cauchy sequence in
$H^s({\mathds{R}}^n)$: by definition, $\bigl(\xi \mapsto (1+|\xi|^2)^{\frac{s}{2}}\hat f_k(\xi)\bigr)_{k\in{\mathds{N}}}$ is a Cauchy 
sequence in $L^2({\mathds{R}}^n)$ which is a Hilbert space. Then it has a limit in $L^2({\mathds{R}}^n)$: let's call it $g$. It remains to 
show that $f:={\mathscr{F}}^{-1}\bigl(\xi\mapsto (1+|\xi|^2)^{-\frac{s}{2}}g(\xi)\bigr)$ (exists in ${\mathscr{S}}'({\mathds{R}}^n)$) 
is the limit of $(f_k)_{k\in{\mathds{N}}}$ in $H^s({\mathds{R}}^n)$. We have that
\begin{align*}
\|f_k-f\|_{H^s}^2&=\int_{{\mathds{R}}^n}(1+|\xi|^2)^s|\hat f_k(\xi)-\hat f(\xi)|^2\,{\rm dm}_n(\xi)\\
&= \bigl\|\xi\mapsto \bigl((1+|\xi|^2)^{\frac{s}{2}}\hat f_k(\xi)-g(\xi)\bigr)\bigr\|_2^2\xrightarrow[k\to\infty]{}0
\end{align*}
by the definition of $g$.
\end{proof}

The following embedding properties hold:

\begin{theorem}[Sobolev embedding]
\label{thm:sobolev}
\begin{itemize}
\item[$\cdot$]
Let $0<s<\frac{n}{2}$. Then $H^s({\mathds{R}}^n)\hookrightarrow L^q({\mathds{R}}^n)$ for $\tfrac{1}{q}=\tfrac{1}{2}-\tfrac{s}{n}$.
\item[$\cdot$]
Let $s>\frac{n}{2}$. Then $H^s({\mathds{R}}^n)\subset {\mathscr{C}}_0({\mathds{R}}^n)$. In particular, functions in $H^s({\mathds{R}}^n)$
are bounded (so belong to $L^\infty({\mathds{R}}^n)$).
\end{itemize}
\end{theorem}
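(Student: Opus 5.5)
For the first point I would reduce to the homogeneous Sobolev embedding \eqref{eq:sobolev-emb} proved above. Let $u\in H^s({\mathds{R}}^n)$ with $0<s<\frac n2$. The plan is to check that $u\in\dot H^s({\mathds{R}}^n)$ with $\|u\|_{\dot H^s}\le\|u\|_{H^s}$, and then apply \eqref{eq:sobolev-emb}. This gives $\|u\|_q\le C\|u\|_{\dot H^s}\le C\|u\|_{H^s}$ with $\frac1q=\frac12-\frac sn$.

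There are two things to verify for membership in $\dot H^s$. The norm bound follows from the pointwise inequality $|\xi|^{2s}\le(1+|\xi|^2)^s$, valid because $s>0$. The local integrability ${\mathscr{F}}(u)\in L^1_{\rm loc}$ follows from the fact that $s\ge0$ and $H^s\subset H^0=L^2$ by the monotonicity proposition. Hence $\hat u\in L^2({\mathds{R}}^n)\subset L^1_{\rm loc}({\mathds{R}}^n)$. Alternatively, on any ball $B$, Cauchy--Schwarz gives
\[
\int_B|\hat u|\,{\rm dm}_n\le\Bigl(\int_B(1+|\xi|^2)^{-s}\,{\rm dm}_n\Bigr)^{1/2}\|u\|_{H^s}.
\]
This point is essentially immediate; equivalently it is the exercise $H^s=\dot H^s\cap L^2$.

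For the second point, let $s>\frac n2$ and $u\in H^s$. The key step is to show $\hat u\in L^1({\mathds{R}}^n)$ by writing $\hat u=(1+|\xi|^2)^{-s/2}\cdot(1+|\xi|^2)^{s/2}\hat u$ and applying Cauchy--Schwarz:
\[
\|\hat u\|_1\le\Bigl(\int_{{\mathds{R}}^n}(1+|\xi|^2)^{-s}\,{\rm dm}_n(\xi)\Bigr)^{1/2}\|u\|_{H^s}.
\]
The integral on the right is finite precisely because $2s>n$. In polar coordinates it is comparable to $\int_1^\infty r^{-2s+n-1}\,{\rm d}r<\infty$, plus a bounded contribution near the origin.

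Next, by the Riemann--Lebesgue Lemma (Theorem~\ref{thm:RL}) combined with \eqref{eq:inverseF}, ${\mathscr{F}}^{-1}(\hat u)(x)={\mathscr{F}}(\hat u)(-x)$ defines a function $v\in{\mathscr{C}}_0({\mathds{R}}^n)$ with $\|v\|_\infty\le C_s\|u\|_{H^s}$.

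The remaining subtlety, which I expect to be the main point needing care, is identifying $v$ with $u$ as tempered distributions. Here ${\mathscr{F}}^{-1}$ is a priori defined on ${\mathscr{S}}'$ by duality, while $v$ is given by the $L^1$ integral. I would handle this via the Fubini identity \eqref{eq:hatfg}: for $f\in{\mathscr{S}}({\mathds{R}}^n)$, pair $v$ with $f$ and swap integrals to get $\int v f\,{\rm dm}_n=\int \hat u\,{\mathscr{F}}^{-1}f\,{\rm dm}_n$. This equals $({\mathscr{F}}^{-1}\hat u)(f)=u(f)$. Hence $u=v\in{\mathscr{C}}_0({\mathds{R}}^n)\subset L^\infty({\mathds{R}}^n)$, and the embedding holds continuously with the bound above.
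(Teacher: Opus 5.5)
Your proof is correct and follows the paper's route. The first point comes from the homogeneous embedding \eqref{eq:sobolev-emb}, once you note that $H^s\subset\dot H^s$ with $\|u\|_{\dot H^s}\le\|u\|_{H^s}$. The second comes from Cauchy--Schwarz giving $\hat u\in L^1$, followed by the Riemann--Lebesgue Lemma and \eqref{eq:inverseF}, exactly as in the mid-semester exam answers. Your Fubini identification of the continuous function with $u$ in ${\mathscr{S}}'$ is a detail the paper leaves implicit, and you handle it correctly.
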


\begin{proof}
The first point, when $0<s<\frac{n}{2}$ is a direct consequence of \eqref{eq:sobolev-emb}. To prove the case $s>\frac{n}{2}$, it is sufficient
to prove that for all $f\in H^s({\mathds{R}}^n)$, we have that $\hat{f}\in L^1({\mathds{R}}^n)$ (why?): exercise!
\end{proof}

\begin{exercise}
Let $n\ge 1$ (you can take $n=1$ if it's more comfortable) and $f:{\mathds{R}}\times {\mathds{R}}^n\to {\mathds{R}}$ be 
a function in $L^2({\mathds{R}}\times{\mathds{R}}^n)$ satisfying
$t\mapsto f(t,\cdot) \in L^2({\mathds{R}};H^n({\mathds{R}}^n))\cap H^1({\mathds{R}};L^2({\mathds{R}}^n))$. This means that for almost all
$t\in{\mathds{R}}$, the function $x\mapsto f(t,x)$ belongs to $H^n({\mathds{R}}^n)$ with $t\mapsto \|f(t,\cdot)\|_{H^n} \in L^2({\mathds{R}})$ and 
for almost all $t\in{\mathds{R}}$, the function $x\mapsto \partial_tf(t,x)$ belongs to $L^2({\mathds{R}}^n)$ with
$(t,x)\mapsto \partial_tf(t,x) \in L^2({\mathds{R}}\times{\mathds{R}}^n)$.
\begin{enumerate}
\item
Show that $(\tau,\xi)\mapsto (1+|\tau|+|\xi|^n)\hat{f}(\tau,\xi)$ belongs to $L^2({\mathds{R}}\times{\mathds{R}}^n)$.
\item
Show that  
$(\xi,\tau)\mapsto (1+|\xi|^{\frac{n}{2}})\hat{f}(\tau,\xi)$ belongs to $L^2({\mathds{R}}^n;L^1({\mathds{R}}))$.
\item
Show that 
$(\tau,\xi)\mapsto (1+|\tau|^{\frac{1}{2}})\hat{f}(\tau,\xi)$ belongs to $L^2({\mathds{R}};L^1({\mathds{R}}^n))$.
\item
Prove that $f$ belongs to ${\mathscr{C}}_0({\mathds{R}};H^{\frac{n}{2}}({\mathds{R}}^n))
\cap H^{\frac{1}{2}}({\mathds{R}};{\mathscr{C}}_0({\mathds{R}}^n))$.
\item
What can be said if we replace the non homogeneous Sobolev spaces $H^s$ with the homogeneous ones $\dot H^s$?
\end{enumerate}
\end{exercise}

\begin{proof}
We denote ${\mathscr{F}}_x$ the Fourier transform with respect to the $x$ variable and ${\mathscr{F}}_t$ the Fourier transform with 
respect to the $t$ variable.
\begin{enumerate}
\item
We know that $t\mapsto \|f(t,\cdot)\|_{H^n} \in L^2({\mathds{R}})$, which means that
\[
(t,\xi)\mapsto (1+|\xi|^2)^{\frac{n}{2}}{\mathscr{F}}_x(f(t,\cdot))(\xi) \in L^2({\mathds{R}}\times {\mathds{R}}^n),
\]
or equivalently (taking the Fourier transform in the $t$ variable)
\begin{equation}
\label{eq:L2Hn}
(\tau,\xi)\mapsto (1+|\xi|^n){\mathscr{F}}_t\bigl({\mathscr{F}}_x(f)\bigr)(\tau,\xi) = (1+|\xi|^n)\hat{f}(\tau,\xi) \in L^2({\mathds{R}}\times {\mathds{R}}^n),
\end{equation}
since $(1+|\xi|^n)$ and $(1+|\xi|^2)^{\frac{n}{2}}$ are comparable ($\xi\in{\mathds{R}}^n$). This gives the first half of the answer.
The assumption that $(t,x)\mapsto \partial_tf(t,x) \in L^2({\mathds{R}}\times{\mathds{R}}^n)$ implies that, taking the Fourier transform
in the $t$ variable:
\[
(\tau,x)\mapsto i\tau {\mathscr{F}}_t(f)(\tau,x) \in L^2({\mathds{R}}\times{\mathds{R}}^n).
\]
Taking the Fourier transform in the $x$ variable, this gives
\begin{equation}
\label{eq:H1L2}
(\tau, \xi)\mapsto |\tau|{\mathscr{F}}_x({\mathscr{F}}_t(f)(\tau,\xi)=|\tau|\hat{f}(\tau,\xi) \in L^2({\mathds{R}}\times{\mathds{R}}^n).
\end{equation}
Adding \eqref{eq:L2Hn} and \eqref{eq:H1L2}, the claim is proved.
\item
We have that
\[
(1+|\xi|^{\frac{n}{2}})\hat{f}(\tau,\xi)=
\tfrac{1+|\xi|^{\frac{n}{2}}}{1+|\tau|+|\xi|^n}\, (1+|\tau|+|\xi|^n)\hat{f}(\tau,\xi).
\]
Since $(\xi,\tau)\mapsto(1+|\tau|+|\xi|^n)\hat{f}(\tau,\xi) \in L^2({\mathds{R}}^n\times{\mathds{R}})$, it is sufficient to prove that the function
$(\xi,\tau)\mapsto \frac{1+|\xi|^{\frac{n}{2}}}{1+|\tau|+|\xi|^n}$ belongs to $L^\infty({\mathds{R}}^n; L^2({\mathds{R}}))$.
We have for all $T>0$ and all $M>0$
\begin{align*}
\int_{-T}^T\bigl|\tfrac{1+|\xi|^{\frac{n}{2}}}{1+|\tau|+|\xi|^n}\bigr|^2\,{\rm dm}_1(\tau)
\le& \sqrt{2} \int_{-T}^T\bigl|\tfrac{(1+|\xi|^n)^{\frac{1}{2}}}{1+|\tau|+|\xi|^n}\bigr|^2\,{\rm dm}_1(\tau)\\
=& \sqrt{2} \int_{-T}^T\frac{1}{\bigl(1+\frac{|\tau|}{1+|\xi|^n}\bigr)^2}\,\tfrac{1}{1+|\xi|^n}\, 
{\rm dm}_1(\tau)\\
=& \sqrt{2} \int_{-\frac{T}{1+|\xi|^n}}^{\frac{T}{1+|\xi|^n}}\tfrac{1}{(1+|\sigma|)^2}\,
{\rm dm}_1(\sigma)
\le  \frac{2}{\sqrt{\pi}},
\end{align*}
where the first inequality comes from the fact that $\frac{a+b}{\sqrt{a^2+b^2}}\le \sqrt{2}$ for all $a,b>0$ and we made the change 
of variable $\sigma=\frac{\tau}{1+|\xi|^n}$ in the last integral. The last inequality is obtained 
by taking $T\to +\infty$.
\item
This point is very similar to the previous one. We have that 
\[
(1+|\tau|^{\frac{1}{2}})\hat{f}(\tau,\xi)=
\tfrac{1+|\tau|^{\frac{1}{2}}}{1+|\tau|+|\xi|^n}\,(1+|\tau|+|\xi|^n)\hat{f}(\tau,\xi).
\]
Since $(\tau,\xi)\mapsto(1+|\tau|+|\xi|^n)\hat{f}(\tau,\xi) \in L^2({\mathds{R}}\times{\mathds{R}}^n)$, it is sufficient to prove that the function
$(\tau,\xi)\mapsto \frac{1+|\tau|^{\frac{1}{2}}}{1+|\tau|+|\xi|^n}$ belongs to $L^\infty({\mathds{R}};L^2({\mathds{R}}^n))$.
We have for all $M>0$ 
\begin{align*}
\int_{|\xi|\le M} \bigl|\tfrac{1+|\tau|^{\frac{1}{2}}}{1+|\tau|+|\xi|^n}\,\bigr|^2\,{\rm dm}_n(\xi)
\le& \sqrt{2} \int_{|\xi|\le M}\bigl|\tfrac{(1+|\tau|)^{\frac{1}{2}}}{1+|\tau|+|\xi|^n}\bigr|^2\,{\rm dm}_n(\xi)\\
=& \sqrt{2} \int_{|\xi|\le M}\frac{1}{\bigl(1+\frac{|\xi|^n}{1+|\tau|}\bigr)^2}\,\tfrac{1}{1+|\tau|}\, 
{\rm dm}_n(\xi)\\
=& \sqrt{2} \Bigl(\int_{|\xi|\le \frac{M}{(1+|\tau|)^{\frac{1}{n}}}} \tfrac{1}{(1+|\eta|^n)^2}\,
{\rm dm}_n(\eta)\Bigr)
\lesssim 1,
\end{align*}
where we made the change of variable $\eta =\frac{\xi}{(1+|\tau|)^{\frac{1}{n}}}$. The constant appearing in the last inequality
involves the quantity $\displaystyle{\int_{{\mathds{S}}^{n-1}}\Bigl(\int_0^\infty \tfrac{r^{n-1}}{(1+r^n)^2}\, {\rm d}r\Bigr)\,{\rm d}\omega}$
where we use the polar coordinates $\eta = r\omega\in {\mathds{R}}^n$, $r>0$, $\omega\in{\mathds{S}}^{n-1}$.
\item
From point 3, we have that $(\tau,\xi)\mapsto (1+|\tau|^{\frac{1}{2}})\hat{f}(\tau,\xi)$ belongs to $L^2({\mathds{R}};L^1({\mathds{R}}^n))$. 
Taking the inverse Fourier transform in the $\xi$ variable, and then in the $\tau$ variable, since ${\mathscr{F}}^{-1}$ maps
$L^1$ to ${\mathscr{C}}_0$, and thanks to the definition of the fractional Sobolev space $H^{\frac{1}{2}}$, 
we obtain that $f$ belongs to $H^{\frac{1}{2}}({\mathds{R}};{\mathscr{C}}_0({\mathds{R}}^n))$.

\noindent
Starting now from point 2, we show similarly, taking the inverse Fourier transform in the $\xi$ and the $\tau$ variables, 
that $f$ belongs to $H^{\frac{n}{2}}({\mathds{R}}^n;{\mathscr{C}}_0({\mathds{R}}))
\subset {\mathscr{C}}_0({\mathds{R}};H^{\frac{n}{2}}({\mathds{R}}^n))$.
\item
We can replace non homogeneous Sobolev spaces by the homogeneous ones. The differences will be:
\begin{itemize}
\item
in the first question: $(\tau,\xi)\mapsto (|\tau|+|\xi|^n)\hat{f}(\tau,\xi)$ belongs to $L^2({\mathds{R}}\times{\mathds{R}}^n)$ instead of
$(\tau,\xi)\mapsto (1+|\tau|+|\xi|^n)\hat{f}(\tau,\xi)$ belongs to $L^2({\mathds{R}}\times{\mathds{R}}^n)$;
\item
in the second question: $(\xi,\tau)\mapsto \frac{|\xi|^{\frac{n}{2}}}{|\tau|+|\xi|^n}$ belongs to $L^\infty({\mathds{R}}^n; L^2({\mathds{R}}))$
instead of $(\xi,\tau)\mapsto \frac{1+|\xi|^{\frac{n}{2}}}{1+|\tau|+|\xi|^n}$ belongs to $L^\infty({\mathds{R}}^n; L^2({\mathds{R}}))$;
\item
in the third question: $(\tau,\xi)\mapsto \frac{|\tau|^{\frac{1}{2}}}{|\tau|+|\xi|^n}$ belongs to $L^\infty({\mathds{R}};L^2({\mathds{R}}^n))$
instead of $(\tau,\xi)\mapsto \frac{1+|\tau|^{\frac{1}{2}}}{1+|\tau|+|\xi|^n}$ belongs to $L^\infty({\mathds{R}};L^2({\mathds{R}}^n))$.
\end{itemize}
\end{enumerate}
There is a nice counter-example due to Llewelyn Andrews to the fact that 
\[
(\tau,\xi)\mapsto  (1+|\xi|^{\frac{n}{2}})\hat{f}(\tau,\xi)
\] 
does not necessarily belong to $L^1({\mathds{R}};L^2({\mathds{R}}^n))$. To avoid any technical details,
let's work with homogeneous Sobolev spaces.

\noindent
Let $\varphi \in L^2({\mathds{R}})$ with support in $[1,\infty)$ such that $\int_1^\infty\tau^3|\varphi(\tau)|^2\,{\rm d}\tau <\infty$
and $\psi\in L^2({\mathds{R}}^n)$ such that $\int_{{\mathds{R}}^n}(1+|\eta|^{2n})|\psi(\eta)|^2\,{\rm d}\eta<\infty$.
Define $f$ via its Fourier transform: 
\[
\hat{f}(\tau,\xi)=\varphi(\tau)\psi(|\xi|\tau^{-\frac{1}{n}}), \quad \tau\in{\mathds{R}}, \xi\in {\mathds{R}}^n.
\]
On the one hand, $f\in L^2({\mathds{R}};\dot H^n({\mathds{R}}^n))$. Indeed:
\begin{align*}
\|f\|_{L^2({\mathds{R}};\dot H^n({\mathds{R}}^n))}^2&=
\int_1^\infty \int_{{\mathds{R}}^n} |\xi|^{2n}|\varphi(\tau)|^2|\psi(|\xi|\tau^{-\frac{1}{n}})|^2\,{\rm dm}_n(\xi)\,{\rm dm}_1(\tau)\\
&=\Bigl(\int_1^\infty \tau |\varphi(\tau)|^2\,{\rm dm}_1(\tau)\Bigr)\Bigl( \int_{{\mathds{R}}^n} |\eta|^{2n}|\psi(\eta)|^2\,{\rm dm}_n(\eta)\Bigr)
\end{align*}
where we made the change of variable $\eta=\xi \,\tau^{-\frac{1}{n}}$. Both of these integrals are convergent thanks to the assumptions
on $\varphi$ and $\psi$ (and the fact that for $\tau\ge 1$, we have $\tau\le \tau^3$).

\noindent
On the other hand, $f\in \dot H^1({\mathds{R}};L^2({\mathds{R}}^n))$. Indeed,
\begin{align*}
\|f\|_{\dot H^1({\mathds{R}};L^2({\mathds{R}}^n))}^2&=
\int_1^\infty \int_{{\mathds{R}}^n} |\tau|^2|\varphi(\tau)|^2|\psi(|\xi|\tau^{-\frac{1}{n}})|^2\,{\rm dm}_n(\xi)\,{\rm dm}_1(\tau)\\
&=\Bigl(\int_1^\infty \tau^3 |\varphi(\tau)|^2\,{\rm dm}_1(\tau)\Bigr)\Bigl( \int_{{\mathds{R}}^n} |\psi(\eta)|^2\,{\rm dm}_n(\eta)\Bigr)
\end{align*}
where we made the same change of variable. Again, both of the integrals on the last line are convergent thanks to the assumptions
made on $\varphi$ and $\psi$.

\noindent
Now, if we want to calculate the $L^1({\mathds{R}};\dot H^{\frac{n}{2}}({\mathds{R}}^n))$ norm of $f$, let $T>1$:
\begin{align*}
I_T(f):=&\int_1^T \Bigl(\int_{{\mathds{R}}^n}|\xi|^{n}|\varphi(\tau)|^2|\psi(|\xi|\tau^{-\frac{1}{n}})|^2\,{\rm dm}_n(\xi)\Bigr)^{\frac{1}{2}}{\rm dm}_1(\tau)\\
=&\Bigl(\int_1^T \tau|\varphi(\tau)|\,{\rm dm}_1(\tau)\Bigr)
\Bigl(\int_{{\mathds{R}}^n}|\eta|^{n}|\psi(|\eta|)|^2\,{\rm dm}_n(\eta)\Bigr)^{\frac{1}{2}}.
\end{align*}
If we choose $\varphi(\tau)=\frac{1}{\tau^2\ln(1+\tau)}{\mathds{1}}_{[1,\infty)}(\tau)$, we have that 
\[
\int_{{\mathds{R}}} \tau^3|\varphi(\tau)|^2\,{\rm d}\tau =\int_1^\infty \tfrac{1}{\tau (\ln(1+\tau))^2}\, {\rm d}\tau<\infty
\]
but
\[
\int_1^T \tau |\varphi(\tau)|\,{\rm d}\tau =\int_1^T\tfrac{1}{\tau \ln(1+\tau)}\,{\rm d}\tau\xrightarrow[T\to+\infty]{}+\infty,
\]
so that $I_T(f)\xrightarrow[T\to+\infty]{}+\infty$ and then $f$ does not belong to $L^1({\mathds{R}};\dot H^{\frac{n}{2}}({\mathds{R}}^n))$.
\end{proof}

\begin{proposition}
Let $s\in {\mathds{R}}$. The dual space of $H^s({\mathds{R}}^n)$ (continuous linear forms on $H^s({\mathds{R}}^n)$) is isometrically
isomorphic to $H^{-s}({\mathds{R}}^n)$.
\end{proposition}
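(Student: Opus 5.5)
The plan is to define an explicit pairing between $H^{-s}({\mathds{R}}^n)$ and $H^s({\mathds{R}}^n)$ and show that it identifies $H^{-s}$ with the dual of $H^s$. The natural candidate, suggested by Plancherel's theorem, is
\[
B(v,u):=\int_{{\mathds{R}}^n} \hat v(\xi)\,\overline{\hat u(\xi)}\,{\rm dm}_n(\xi), \quad v\in H^{-s}({\mathds{R}}^n),\ u\in H^s({\mathds{R}}^n),
\]
which reduces to $\langle v,u\rangle$ when $s=0$. It is convenient to work with the bilinear version, without the conjugate, so that the map is linear rather than antilinear. Writing $\hat v\,\overline{\hat u}=(1+|\xi|^2)^{-\frac s2}\hat v\cdot(1+|\xi|^2)^{\frac s2}\overline{\hat u}$, the Cauchy-Schwarz inequality gives $|B(v,u)|\le \|v\|_{H^{-s}}\|u\|_{H^s}$. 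Hence $\Lambda: v\mapsto B(v,\cdot)$ maps $H^{-s}$ into $(H^s)'$, and $\|\Lambda v\|_{(H^s)'}\le \|v\|_{H^{-s}}$.

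For the reverse inequality, i.e. isometry, fix $v\in H^{-s}$ and test against the explicit function $u$ defined by $\hat u=(1+|\xi|^2)^{-s}\hat v$. This $u$ lies in $H^s$, since $(1+|\xi|^2)^{\frac s2}\hat u=(1+|\xi|^2)^{-\frac s2}\hat v\in L^2$, and $\|u\|_{H^s}=\|v\|_{H^{-s}}$. Moreover $B(v,u)=\|v\|_{H^{-s}}^2$, which gives $\|\Lambda v\|\ge\|v\|_{H^{-s}}$. In particular $\Lambda$ is injective.

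Surjectivity is where the real work lies, and I would use the Riesz representation theorem in the Hilbert space $H^s$, which is available by the earlier proposition that $H^s({\mathds{R}}^n)$ is a Hilbert space. Given $\ell\in(H^s)'$, Riesz produces $w\in H^s$ with
\[
\ell(u)=\int_{{\mathds{R}}^n}(1+|\xi|^2)^s\hat u\,\overline{\hat w}\,{\rm dm}_n \quad \text{for all } u\in H^s,
\]
and $\|\ell\|=\|w\|_{H^s}$. Now set $v:={\mathscr{F}}^{-1}\bigl((1+|\xi|^2)^s\overline{\hat w}\bigr)$, with the conjugate placement matched to the bilinear pairing. This is a tempered distribution because $(1+|\xi|^2)^{-\frac s2}\cdot(1+|\xi|^2)^s|\hat w|=(1+|\xi|^2)^{\frac s2}|\hat w|\in L^2$. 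The same computation shows $v\in H^{-s}$ with $\|v\|_{H^{-s}}=\|w\|_{H^s}$, and by construction $\Lambda v=\ell$.

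The main obstacle is bookkeeping rather than analysis. The conjugations have to be handled consistently so that $\Lambda$ is genuinely linear, which is an issue specific to complex scalars. One also has to check that the objects defined through the Fourier side are really tempered distributions. That check follows as in the proof of completeness: a function $g$ with $(1+|\xi|^2)^{t/2}g\in L^2$ defines an element of ${\mathscr{S}}'$, because Schwartz functions decay faster than any power.
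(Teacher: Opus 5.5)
Your architecture matches the paper's proof: a Cauchy--Schwarz bound on the pairing, an explicit extremal test function for the reverse norm inequality, and Riesz representation for surjectivity. There are two minor differences. First, the paper uses the sesquilinear pairing $L_\varphi(f)=\int_{{\mathds{R}}^n}\overline{{\mathscr{F}}(\varphi)}\,{\mathscr{F}}(f)\,{\rm dm}_n$. It is linear in $f$, so $L_\varphi\in(H^s)'$ causes no trouble, at the price of $\varphi\mapsto L_\varphi$ being conjugate-linear. Second, the paper applies Riesz--Fr\'echet in $L^2({\mathds{R}}^n)$ after transporting $T$ through $g\mapsto{\mathscr{F}}^{-1}\bigl((1+|\xi|^2)^{-\frac s2}\hat g\bigr)$, while you apply Riesz directly in the Hilbert space $H^s({\mathds{R}}^n)$. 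A bilinear pairing used consistently would give a genuinely linear isometric isomorphism, which is slightly cleaner than the paper's.

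However, you do not use the bilinear pairing consistently, and the isometry step fails as written. Your surjectivity step, with $\hat v=(1+|\xi|^2)^s\overline{\hat w}$, is correct only for $B(v,u)=\int\hat v\,\hat u\,{\rm dm}_n$. This is also the version that makes $B(v,\cdot)$ linear in $u$, as membership in $(H^s)'$ requires; the displayed version with $\overline{\hat u}$ does not. For the bilinear pairing, your test function $\hat u=(1+|\xi|^2)^{-s}\hat v$ gives
\[
B(v,u)=\int_{{\mathds{R}}^n}(1+|\xi|^2)^{-s}\bigl(\hat v(\xi)\bigr)^2\,{\rm dm}_n(\xi),
\]
which is not $\|v\|_{H^{-s}}^2$. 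It even vanishes for nonzero $v$: take $\hat v=\phi_1+i\phi_2$ with $\phi_1,\phi_2$ real-valued, disjointly supported, and $\int(1+|\xi|^2)^{-s}\phi_1^2\,{\rm dm}_n=\int(1+|\xi|^2)^{-s}\phi_2^2\,{\rm dm}_n>0$. So neither $\|\Lambda v\|_{(H^s)'}\ge\|v\|_{H^{-s}}$ nor injectivity follows from what you wrote.

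The repair is to test with $\hat u=(1+|\xi|^2)^{-s}\,\overline{\hat v}$. Then $u\in H^s$, $\|u\|_{H^s}=\|v\|_{H^{-s}}$, and $B(v,u)=\int(1+|\xi|^2)^{-s}|\hat v|^2\,{\rm dm}_n=\|v\|_{H^{-s}}^2$. Alternatively, the isometry comes for free from your surjectivity construction. The map $w\mapsto v$ given by $\hat v=(1+|\xi|^2)^s\overline{\hat w}$ is a bijection from $H^s$ onto $H^{-s}$ with $\|v\|_{H^{-s}}=\|w\|_{H^s}$. Moreover, $\Lambda v$ is exactly the functional $u\mapsto\int(1+|\xi|^2)^s\hat u\,\overline{\hat w}\,{\rm dm}_n$ represented by $w$, so the norm identity in the Riesz theorem gives $\|\Lambda v\|_{(H^s)'}=\|w\|_{H^s}=\|v\|_{H^{-s}}$. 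With this correction your proof is complete.
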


\begin{proof}
Let $\varphi \in H^{-s}({\mathds{R}}^n)$: we define $L_{\varphi}: H^s({\mathds{R}}^n)\to {\mathds{C}}$ by 
\[
L_{\varphi}(f):=\int_{{\mathds{R}}^n}\,\overline{{\mathscr{F}}(\varphi)(\xi)}{\mathscr{F}}(f)(\xi)\,{\rm dm}_n(\xi), \quad f\in H^s({\mathds{R}}^n).
\]
First, let's show that this is a ``good" definition: the integral exists since
\begin{itemize}
\item[-]
$\xi\mapsto (1+|\xi|^2)^{-\frac{s}{2}}{\mathscr{F}}(\varphi)(\xi) \in L^2({\mathds{R}}^n)$;
\item[-]
$\xi\mapsto (1+|\xi|^2)^{\frac{s}{2}}{\mathscr{F}}(f)(\xi) \in L^2({\mathds{R}}^n)$.
\end{itemize}
Therefore $\xi\mapsto (1+|\xi|^2)^{-\frac{s}{2}}\overline{{\mathscr{F}}(\varphi)(\xi)}(1+|\xi|^2)^{\frac{s}{2}}{\mathscr{F}}(f)(\xi)
=\overline{{\mathscr{F}}(\varphi)(\xi)}{\mathscr{F}}(f)(\xi)$ is an integrable function, and we have that
\[
|L_{\varphi}(f)|\le \|\varphi\|_{H^{-s}}\|f\|_{H^s}, \quad \mbox{ for all } f\in H^s({\mathds{R}}^n).
\]
Moreover, it is clear that $L_{\varphi}$ is linear on $H^s({\mathds{R}}^n)$. This proves that 
\begin{align*}
L:H^{-s}({\mathds{R}}^n)& \longrightarrow \bigl(H^s({\mathds{R}}^n)\bigr)'\\
\varphi&\longmapsto L_\varphi
\end{align*}
is a bounded homomorphism. Its norm is less than or equal to 1. To prove that it is equal to 1, choose 
$f={\mathscr{F}}^{-1}\bigl(\xi\mapsto (1+|\xi|^2)^{-s}{\mathscr{F}}(\varphi)(\xi)\bigr)$: $f\in H^s({\mathds{R}}^n)$
and $\|f\|_{H^s}=\|\varphi\|_{H^{-s}}$ and we have that
\begin{align*}
|L_\varphi(f)|&=\int_{{\mathds{R}}^n}\,\overline{{\mathscr{F}}(\varphi)(\xi)}{\mathscr{F}}(f)(\xi)\,{\rm dm}_n(\xi)\\
&=\int_{{\mathds{R}}^n}\,\overline{{\mathscr{F}}(\varphi)(\xi)} (1+|\xi|^2)^{-s}{\mathscr{F}}(\varphi)(\xi)\,{\rm dm}_n(\xi)\\
&=\int_{{\mathds{R}}^n}\,(1+|\xi|^2)^{-s}\bigl|{\mathscr{F}}(\varphi)(\xi)\bigr|^2\,{\rm dm}_n(\xi)
=\|\varphi\|_{H^{-s}}^2 = \|\varphi\|_{H^{-s}} \|f\|_{H^s}.
\end{align*}
This proves that the norm of $L_{\varphi}$ as a linear form on $H^{-s}({\mathds{R}}^n)$ is equal to $\|\varphi\|_{H^{-s}}$.
This means that $L:\varphi\mapsto L_\varphi$ is an isometric homomorphism from $H^{-s}({\mathds{R}}^n)$ to 
$\bigl(H^s({\mathds{R}}^n)\bigr)'$.

We need now to show that it is an isomorphism.

\noindent
Injectivity is immediate: if $L_\varphi=0$ then $\varphi=0$ in $H^{-s}({\mathds{R}}^n)$ (exercise!). 

\noindent
It remains to prove surjectivity. Let $T\in \bigl(H^s({\mathds{R}}^n)\bigr)'$ and define the linear form $M$ on $L^2({\mathds{R}}^n)$ by
\[
M(g):=T\bigl({\mathscr{F}}^{-1}(\xi\mapsto (1+|\xi|^2)^{-\frac{s}{2}}\hat{g}(\xi))\bigr), \quad g\in L^2({\mathds{R}}^n).
\]
This definition makes sense since ${\mathscr{F}}^{-1}(\xi\mapsto (1+|\xi|^2)^{-\frac{s}{2}}\hat{g}(\xi))$ belongs to 
$H^s({\mathds{R}}^n)$ for all $g\in L^2({\mathds{R}}^n)$ (check!), and we have that
\[
\|M\|_{(L^2)'}=\|T\|_{(H^s)'}.
\]
Now, by the Riesz-Fr\'echet representation theorem, there exists $\psi\in L^2({\mathds{R}}^n)$ such that
\[
M(g)=\int_{{\mathds{R}}^n} \psi(x)\overline{g(x)}\,{\rm dm}_n(x), \quad \forall\,g\in L^2({\mathds{R}}^n).
\]
This means that for all $f\in H^s({\mathds{R}}^n)$, $g={\mathscr{F}}^{-1}(\xi\mapsto (1+|\xi|^2)^{\frac{s}{2}}{\mathscr{F}}(f)(\xi))$ belongs
to $L^2({\mathds{R}}^n)$ and we have
\begin{align*}
T(f)&=M(g)= \int_{{\mathds{R}}^n} \overline{\psi(x)} g(x)\,{\rm dm}_n(x)
=  \int_{{\mathds{R}}^n} \overline{\hat{\psi}(\xi)}\hat{g}(\xi)\,{\rm dm}_n(\xi)\\
&= \int_{{\mathds{R}}^n} (1+|\xi|^2)^{\frac{s}{2}} \overline{\hat{\psi}(\xi)} {\mathscr{F}}(f)(\xi)\,{\rm dm}_n(\xi)\\
&= \int_{{\mathds{R}}^n} \,\overline{{\mathscr{F}}(\varphi)(\xi)}{\mathscr{F}}(f)(\xi)\,{\rm dm}_n(\xi)
=L_\varphi(f),
\end{align*}
where $\varphi={\mathscr{F}}^{-1}\bigl(\xi\mapsto (1+|\xi|^2)^{\frac{s}{2}} \hat{\psi}(\xi)\bigr)$, which belongs to 
$H^{-s}({\mathds{R}}^n)$ (check!).
\end{proof}

\begin{remark}
This duality result holds also for homogeneous Sobolev spaces but only in the range where they are Hilbert spaces, so
$\bigl(\dot H^s({\mathds{R}}^n)\bigr)'$ is isometrically isomorphic to $\dot H^{-s}({\mathds{R}}^n)$ for $-\frac{n}{2}<s<\frac{n}{2}$.
\end{remark}

\subsection{The Laplacian on Sobolev spaces}

The Laplace operator $\Delta$ is defined on ${\mathscr{S}}'({\mathds{R}}^n)$ as follows:
\[
\Delta u := \partial_1^2u+...+\partial_n^2u, \quad u\in {\mathscr{S}}'({\mathds{R}}^n).
\]
On the Fourier side, this gives:
\[
{\mathscr{F}}(-\Delta u)(\xi)=\bigl(|\xi_1|^2+...+|\xi_n|^2\bigr){\mathscr{F}}(u)(\xi) = |\xi|^2{\mathscr{F}}(u)(\xi), \quad \xi\in{\mathds{R}}^n.
\]

\begin{definition}
The {\tt fractional powers} of the Laplace operator are given by:
\[
(-\Delta)^su = {\mathscr{F}}^{-1}\bigl(\xi\mapsto |\xi|^{2s}{\mathscr{F}}(u)(\xi)\bigr),\quad u\in {\mathscr{S}}'({\mathds{R}}^n),\ s\in{\mathds{R}}.
\]
\end{definition}

\begin{proposition}
Let $s\in{\mathds{R}}$. Then $(-\Delta)^s$ maps $\dot H^\alpha({\mathds{R}}^n)$ to $\dot H^{\alpha-2s}({\mathds{R}}^n)$ boundedly for all
$\alpha\in{\mathds{R}}$. In particular, $(-\Delta)^s$ is invertible on $\dot H^\alpha({\mathds{R}}^n)$ and its inverse is $(-\Delta)^{-s}$.
\end{proposition}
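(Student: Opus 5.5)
The plan is to work entirely on the Fourier side, since both $(-\Delta)^s$ and the $\dot H^\alpha$ norms are defined through ${\mathscr{F}}$. Fix $s,\alpha\in{\mathds{R}}$ and $u\in \dot H^\alpha({\mathds{R}}^n)$. By definition ${\mathscr{F}}(u)\in L^1_{\rm loc}({\mathds{R}}^n)$ and $\xi\mapsto|\xi|^\alpha{\mathscr{F}}(u)(\xi)\in L^2({\mathds{R}}^n)$. I will set $v:=(-\Delta)^su={\mathscr{F}}^{-1}\bigl(\xi\mapsto|\xi|^{2s}{\mathscr{F}}(u)(\xi)\bigr)$ and check that $v\in\dot H^{\alpha-2s}({\mathds{R}}^n)$. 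The key computation is the pointwise identity
\[
|\xi|^{\alpha-2s}\,|{\mathscr{F}}(v)(\xi)|=|\xi|^{\alpha-2s}|\xi|^{2s}|{\mathscr{F}}(u)(\xi)|=|\xi|^{\alpha}|{\mathscr{F}}(u)(\xi)|,\quad \xi\neq0 .
\]
This gives $\|(-\Delta)^su\|_{\dot H^{\alpha-2s}}=\|u\|_{\dot H^\alpha}$. So the map is in fact isometric, hence bounded with norm $1$.

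The step I expect to be the main obstacle is making sense of $v$ as an element of ${\mathscr{S}}'({\mathds{R}}^n)$ with ${\mathscr{F}}(v)\in L^1_{\rm loc}$. The multiplier $|\xi|^{2s}$ is singular or non-smooth at $0$ when $s<0$ or $s$ is non-integer, so ${\mathscr{F}}(v)=|\xi|^{2s}{\mathscr{F}}(u)$ need not be locally integrable near the origin. I would first handle the region $|\xi|\ge1$. There $|\xi|^{2s}{\mathscr{F}}(u)=|\xi|^{-(\alpha-2s)}\cdot|\xi|^\alpha{\mathscr{F}}(u)$ is a polynomially weighted $L^2$ function, so it defines a tempered distribution, exactly as in the completeness proposition above.

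Near the origin I would argue on the ball $B_n(0,1)$ with Cauchy--Schwarz, mirroring the completeness proof:
\[
\int_{B_n(0,1)}|\xi|^{2s}|{\mathscr{F}}(u)|\,{\rm d}\xi\le\Bigl(\int_{B_n(0,1)}|\xi|^{-2(\alpha-2s)}{\rm d}\xi\Bigr)^{1/2}\|u\|_{\dot H^\alpha}.
\]
This is finite as soon as $\alpha-2s<\frac n2$. In that range ${\mathscr{F}}(v)\in L^1_{\rm loc}$ and also defines a tempered distribution, so $v\in\dot H^{\alpha-2s}$. Outside this range I would interpret the statement in the natural sense used for homogeneous spaces, namely identifying elements through ${\mathscr{F}}(u)$ as functions with finite weighted $L^2$ norm. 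I would flag this as a caveat, since the definition of $\dot H^{\alpha-2s}$ is only well behaved (complete) for $\alpha-2s<\frac n2$ by the earlier proposition.

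Finally, for invertibility, I will note that $|\xi|^{-2s}|\xi|^{2s}=1$ for $\xi\neq0$, and $\{0\}$ is Lebesgue-null. Hence ${\mathscr{F}}\bigl((-\Delta)^{-s}(-\Delta)^su\bigr)={\mathscr{F}}(u)$ almost everywhere, and since ${\mathscr{F}}$ is injective on ${\mathscr{S}}'$, we get $(-\Delta)^{-s}(-\Delta)^su=u$. The other composition is handled symmetrically, using the first part with $s$ replaced by $-s$ and $\alpha$ by $\alpha-2s$. So $(-\Delta)^s:\dot H^\alpha\to\dot H^{\alpha-2s}$ is an isometric isomorphism with inverse $(-\Delta)^{-s}$.
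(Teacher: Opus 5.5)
Your argument is correct and is the paper's own one-line proof ("immediate on the Fourier side"): $(-\Delta)^s$ is multiplication by $|\xi|^{2s}$, which carries the weighted $L^2$ condition defining $\dot H^\alpha$ isometrically onto the one defining $\dot H^{\alpha-2s}$, with inverse given by multiplication by $|\xi|^{-2s}$. Your caveat about the $L^1_{\rm loc}$ requirement is a real subtlety that the paper passes over; two refinements are worth noting: for $s\ge 0$, local integrability at $0$ is automatic from ${\mathscr{F}}(u)\in L^1_{\rm loc}$ (since $|\xi|^{2s}\le 1$ on the unit ball), whereas for $s\neq 0$ literal invertibility requires both $\alpha<\frac{n}{2}$ and $\alpha-2s<\frac{n}{2}$, and your first step together with your ``symmetric'' step implicitly needs exactly these two conditions.
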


\begin{proof}
This is immediate when working on the Fourier side.
\end{proof}

The following lemma will be useful for Navier-Stokes equations.

\begin{lemma}
\label{lem:multSobolev}
Let $n\ge 2$. There exists $C>0$ such that for all $f,g\in \dot H^{\frac{n-1}{2}}$ we have that $fg\in\dot H^{n/2-1}$ and
the following estimate holds
\[
\|fg\|_{\dot H^{n/2-1}}\le C\ \|f\|_{\dot H^{\frac{n-1}{2}}}\|g\|_{\dot H^{\frac{n-1}{2}}}.
\]
\end{lemma}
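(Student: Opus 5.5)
The plan is to go through Lebesgue spaces via the Sobolev embedding \eqref{eq:sobolev-emb} and duality, rather than working directly with convolutions on the Fourier side. The exponents are $s=\frac{n-1}{2}$ and $\frac{n}{2}-1$. Since $n\ge 2$, we have $0<\frac{n-1}{2}<\frac n2$, so \eqref{eq:sobolev-emb} gives
\[
\dot H^{\frac{n-1}{2}}\hookrightarrow L^{2n},\qquad \tfrac{1}{2n}=\tfrac12-\tfrac{n-1}{2n}.
\]
By H\"older's inequality, for $f,g\in\dot H^{\frac{n-1}{2}}$ we then get $fg\in L^n$ with $\|fg\|_n\le\|f\|_{2n}\|g\|_{2n}\lesssim \|f\|_{\dot H^{\frac{n-1}{2}}}\|g\|_{\dot H^{\frac{n-1}{2}}}$.

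It therefore suffices to prove the dual embedding $L^n\hookrightarrow \dot H^{\frac n2-1}$. Note that $\frac n2-1=-(1-\frac n2)$ and $\frac1n=\frac12+\frac{1-n/2}{n}$, i.e.\ $n$ is the conjugate exponent $q'$ of $q$ with $\frac1q=\frac12-\frac{1-n/2}{n}$.

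\emph{Case $n\ge3$.} Set $\sigma=1-\frac n2<0$, so $\frac n2-1=-\sigma$ and we need $L^n\hookrightarrow\dot H^{-\sigma}$ with $-\sigma=\frac n2-1\in(0,\frac n2)$. Argue by duality. Set $t=1-\frac n2+ \ldots$; more simply, observe that $\dot H^{\frac n2-1}$ is the dual of $\dot H^{1-\frac n2}$ by the Remark following the duality proposition, valid since $|\frac n2-1|<\frac n2$. The exponent $1-\frac n2$ is negative when $n\ge3$, so \eqref{eq:sobolev-emb} does not apply directly and the duality must be run the other way. For $h\in L^n$ and a test function $\phi\in\mathscr S$, estimate $|\langle h,\phi\rangle|\le\|h\|_n\|\phi\|_{n'}$. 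This requires $\dot H^{1-\frac n2}\hookrightarrow L^{n'}$, with $n'<2$, which is false; so this route has the wrong sign.

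\emph{Correcting the route.} Dualizing $\dot H^{r}\hookrightarrow L^{q}$ with $0<r<\frac n2$ yields $L^{q'}\hookrightarrow\dot H^{-r}$, which produces only negative regularity. For $n\ge3$, however, $\frac n2-1\ge \frac12>0$, so negative regularity is not enough. The Lebesgue detour must be abandoned and replaced by a genuine product estimate.

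\emph{Plan B (Fourier side).} Write $\widehat{fg}=\hat f*\hat g$ and set $s=\frac{n-1}{2}$, $\sigma=\frac n2-1=s-\frac12$. Use
\[
|\xi|^\sigma\lesssim |\xi-\eta|^\sigma+|\eta|^\sigma,
\]
valid since $\sigma\ge0$; when $n=2$ we have $\sigma=0$ and the inequality is trivial. By symmetry it suffices to bound $\bigl\|(|\cdot|^\sigma|\hat f|)*|\hat g|\bigr\|_2$. Put $F=\mathscr F^{-1}(|\cdot|^\sigma|\hat f|)$ and $G=\mathscr F^{-1}|\hat g|$. By Plancherel this quantity equals $\|F\,G\|_2$. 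Now $F\in\dot H^{\frac12}$ with norm $\|f\|_{\dot H^s}$, hence $F\in L^{\frac{2n}{n-1}}$ by \eqref{eq:sobolev-emb}. Likewise $G\in\dot H^s\hookrightarrow L^{2n}$, and replacing $\hat g$ by $|\hat g|$ does not change the $\dot H^s$ norm. H\"older with $\frac{n-1}{2n}+\frac1{2n}=\frac12$ then gives
\[
\|FG\|_2\lesssim\|f\|_{\dot H^s}\|g\|_{\dot H^s}.
\]
The main obstacle is this justification step: handling $|\hat f|$ in place of $\hat f$ (harmless, because $\dot H^s$ norms see only $|\hat f|$), and checking that $\widehat{fg}=\hat f*\hat g$ is legitimate, which can be done by density of Schwartz functions. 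The case $n=2$ reduces to the $L^4\cdot L^4\subset L^2$ estimate, which follows from the same argument.
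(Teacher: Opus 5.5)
Your Plan B is correct, and it takes a genuinely different route from the paper. The first half of your proposal, the detour through $L^n$, is a dead end for $n\ge3$, as you noticed, and should simply be dropped.

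\textbf{The paper's route.} It works in physical space with integer derivatives. It expands $D^k(fg)$ by the Leibniz rule, with $k=\frac n2-1$ for $n$ even and $k=\frac{n-1}{2}$ for $n$ odd. Each term $D^jf\,D^{k-j}g$ is then placed in $L^2$ (even case) or in $L^{\frac{2n}{n+1}}$ (odd case), using Sobolev embeddings and H\"older with $j$-dependent exponents. In the odd case the missing half derivative is recovered from the dual embedding $L^{\frac{2n}{n+1}}\hookrightarrow\dot H^{-\frac12}$.

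\textbf{Your route.} You split the fractional weight on the Fourier side with $|\xi|^\sigma\le 2^\sigma(|\xi-\eta|^\sigma+|\eta|^\sigma)$. You then use the positivity trick: replacing $\hat f,\hat g$ by $|\hat f|,|\hat g|$ leaves homogeneous Sobolev norms unchanged, so you can return to physical space. There the single H\"older estimate $L^{\frac{2n}{n-1}}\cdot L^{2n}\subset L^2$ finishes the argument.

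\textbf{What each buys.} Your route treats all $n\ge2$ at once, with no parity split, no Leibniz rule and no duality. The same computation yields the general law $\|fg\|_{\dot H^{s_1+s_2-\frac n2}}\lesssim\|f\|_{\dot H^{s_1}}\|g\|_{\dot H^{s_2}}$ for $s_1,s_2<\frac n2$ with $s_1+s_2\ge\frac n2$. In particular it gives the variant $\dot H^{\frac n2-1}\times\dot H^{\frac n2-1}\to\dot H^{\frac n2-2}$ used in Lemma~\ref{lem:propB} when $n\ge4$. The paper's route uses only tools already on the table (the classical Leibniz rule and the stated embeddings), at the price of the case distinction.

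\textbf{Closing the step you flag.} For $f,g\in\mathscr S$ the functions $F,G$ lie in $L^2$. For $F,G\in L^2$, the identity $\widehat{FG}=\hat F*\hat G$ follows from Parseval applied to $\langle F,\overline{G}e_\xi\rangle=\widehat{FG}(\xi)$, so your estimate holds rigorously for Schwartz $f,g$. For general $f,g\in\dot H^{\frac{n-1}{2}}$, take Schwartz approximants $f_k\to f$, $g_k\to g$ in $\dot H^{\frac{n-1}{2}}$. Then $f_kg_k\to fg$ in $L^n\subset\mathscr S'$, by the embedding into $L^{2n}$. By bilinearity $(f_kg_k)$ is Cauchy in $\dot H^{\frac n2-1}$, which is complete and continuously embedded in $\mathscr S'$ since $\frac n2-1<\frac n2$. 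Hence its limit is $fg$, and the estimate passes to the limit.
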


\begin{proof}
This can be proved using weighted Young's inequality for convolution.
We propose here an elementary proof for the particular case we are interested in. We will distinguish two cases: $n$ even and $n$ odd.

\noindent
{\tt Case $n$ even:}
Denote by $k$ the natural number $\frac{n}{2}-1$. Then Leibniz rule gives for all $f,g\in \dot H^{\frac{n-1}{2}}$:
\[
D^k(fg)=\sum_{j=0}^k c_{j,k} D^jfD^{k-j}g.
\]
Since $f\in \dot H^{\frac{n-1}{2}}$, $D^jf\in \dot H^{\frac{n-1}{2}-j}\hookrightarrow L^p$ with $\frac{1}{p}=\frac{j}{n}+\frac{1}{2n}$.
For $g\in \dot H^{\frac{n-1}{2}}$, we have that $D^{k-j}g\in \dot H^{j+\frac{1}{2}}\hookrightarrow L^q$ with $\frac{1}{q}=\frac{1}{2}-\frac{1}{p}$.
This gives
\[
\|fg\|_{\dot H^{n/2-1}} \sim \|D^k(fg)\|_2\lesssim \|f\|_{\dot H^{\frac{n-1}{2}}} \|g\|_{\dot H^{\frac{n-1}{2}}}
\]
as claimed.

\noindent
{\tt Case $n$ odd:}
We denote now by $k$ the natural number $\frac{n-1}{2}$. As before, the Leibniz rule gives
\[
D^k(fg)=\sum_{j=0}^kc_{j,k} D^jfD^{k-j}g.
\]
As in the previous case, $D^jf\in \dot H^{\frac{n-1}{2}-j}\hookrightarrow L^p$ with $\frac{1}{p}=\frac{j}{n}+\frac{1}{2n}$.
For $g\in \dot H^{\frac{n-1}{2}}$, we have that $D^{k-j}g\in \dot H^j \hookrightarrow L^q$ with $\frac{1}{q}=\frac{1}{2}-\frac{j}{n}$.
This gives
\[
\|D^k(fg)\|_r\lesssim \|f\|_{\dot H^{\frac{n-1}{2}}} \|g\|_{\dot H^{\frac{n-1}{2}}}
\]
where $\frac{1}{r}=\frac{1}{2}+\frac{1}{2n}$. By Sobolev embedding and duality, we see that $\dot H^{-1/2}\hookrightarrow L^r$, 
from which we infer
\[
\|(-\Delta)^{-1/4}D^k(fg)\|_2\sim \|D^k(fg)\|_{\dot H^{-1/2}}\lesssim \|D^k(fg)\|_r.
\]
This gives
\[
\|fg\|_{\dot H^{n/2-1}}\sim \|(-\Delta)^{-1/4}D^k(fg)\|_2 \lesssim  \|f\|_{\dot H^{\frac{n-1}{2}}} \|g\|_{\dot H^{\frac{n-1}{2}}}
\]
as claimed.
\end{proof}

Next result is the well-known boundedness of {\tt Riesz transforms} ${\mathcal{R}}_k=\partial_k(-\Delta)^{-\frac{1}{2}}$, $k=1,...,n$.

\begin{remark}
In dimension 1 ($n=1$), there is only one Riesz transform: it is called the {\tt Hilbert transform}.
\end{remark}

\begin{proposition}
For all $k\in \{1,...,n\}$, the Riesz transforms ${\mathcal{R}}_k$ are bounded operators on $\dot H^s({\mathds{R}}^n)$ for all $s\in{\mathds{R}}$.
\end{proposition}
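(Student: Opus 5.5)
The plan is to work entirely on the Fourier side, where ${\mathcal{R}}_k$ becomes multiplication by a bounded symbol. First I identify the symbol. Take $u\in\dot H^s({\mathds{R}}^n)$, so that $\hat u\in L^1_{\rm loc}$ and $|\xi|^s\hat u\in L^2$. By the definition of the fractional powers of the Laplacian, ${\mathscr{F}}\bigl((-\Delta)^{-\frac12}u\bigr)(\xi)=|\xi|^{-1}\hat u(\xi)$. By \eqref{eq:diff-mult}, extended to ${\mathscr{S}}'({\mathds{R}}^n)$ as in the exercise, differentiation $\partial_k=iD_{e_k}$ becomes multiplication by $i\xi_k$. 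Combining the two,
\[
{\mathscr{F}}({\mathcal{R}}_ku)(\xi)=i\,\frac{\xi_k}{|\xi|}\,\hat u(\xi),\qquad \xi\neq 0 .
\]
Since $\{0\}$ is Lebesgue-null, the value of the symbol at $\xi=0$ does not matter.

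Next I check that ${\mathcal{R}}_ku$ lies in $\dot H^s$. The symbol $m_k(\xi)=i\xi_k/|\xi|$ is measurable and satisfies $|m_k(\xi)|\le 1$. Hence $m_k\hat u\in L^1_{\rm loc}$, because $|m_k\hat u|\le|\hat u|$. Also $m_k\hat u$ defines a tempered distribution whenever $\hat u$ does, since multiplying by a bounded function preserves local integrability and the same growth control at infinity. Then
\[
\|{\mathcal{R}}_ku\|_{\dot H^s}^2=\int_{{\mathds{R}}^n}|\xi|^{2s}\frac{\xi_k^2}{|\xi|^2}|\hat u(\xi)|^2\,{\rm d}\xi\le \|u\|_{\dot H^s}^2 ,
\]
so ${\mathcal{R}}_k$ is bounded on $\dot H^s$ with norm at most $1$. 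Linearity is clear from the definition.

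The only delicate point is justifying the composition $\partial_k(-\Delta)^{-\frac12}$ on $\dot H^s$ for every $s\in{\mathds{R}}$. The problem is that $(-\Delta)^{-\frac12}u$ alone need not have a locally integrable Fourier transform near $0$: the factor $|\xi|^{-1}$ can destroy integrability there. To avoid this, I would simply \emph{define} ${\mathcal{R}}_k$ on $\dot H^s$ as the Fourier multiplier with symbol $i\xi_k/|\xi|$. This agrees with $\partial_k(-\Delta)^{-\frac12}$ on Schwartz functions whose Fourier transform vanishes near the origin, and such functions are dense. With this definition the estimate above is the whole proof. As a byproduct, $\sum_k{\mathcal{R}}_k^2=-{\rm Id}$, since $\sum_k\xi_k^2/|\xi|^2=1$.
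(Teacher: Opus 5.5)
Your proof is correct and follows the same route as the paper: you identify $\mathcal{R}_k$ on the Fourier side as multiplication by the symbol $i\xi_k/|\xi|$, whose modulus is at most $1$, and you read off $\|\mathcal{R}_k u\|_{\dot H^s}\le\|u\|_{\dot H^s}$. You also take extra care that the paper skips: you define $\mathcal{R}_k$ directly as this Fourier multiplier, since $|\xi|^{-1}\hat u$ need not be locally integrable near the origin, and you check that $\mathcal{R}_k u$ meets the membership conditions of $\dot H^s$; this is a legitimate refinement.
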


\begin{proof}
Let $s\in{\mathds{R}}$ and $k\in\{1,...,n\}$. For all $f\in \dot H^s({\mathds{R}}^n)$, we have that 
\[
{\mathscr{F}}({\mathcal{R}}_k(f))(\xi)=\tfrac{i\xi_k}{|\xi|}\hat{f}(\xi), \quad  \xi\in{\mathds{R}}^n.
\]  
So that
\begin{align*}
\|{\mathcal{R}}_k(f))\|_{\dot H^s}=&\Bigl(\int_{{\mathds{R}}^n}|\xi|^{2s}|{\mathscr{F}}({\mathcal{R}}_k(f))(\xi)|^2\,{\rm d}m_n(\xi)\Bigr)^{\frac{1}{2}}\\
=&\Bigl(\int_{{\mathds{R}}^n}|\xi|^{2s}|\hat{f}(\xi)|^2\bigl|\tfrac{i\xi_k}{|\xi|}\bigr|^2\,{\rm d}m_n(\xi)\Bigr)^{\frac{1}{2}}
\le\|f\|_{\dot H^s}
\end{align*}
since $\bigl|\tfrac{i\xi_k}{|\xi|}\bigr|\le 1$ for all $\xi\in{\mathds{R}}^n\setminus\{0\}$.
\end{proof}

\begin{theorem}
For all $k\in \{1,...,n\}$, the Riesz transforms ${\mathcal{R}}_k$ are bounded operators on $L^p({\mathds{R}}^n)$ for all $1<p<\infty$.
\end{theorem}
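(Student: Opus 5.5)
The plan is the classical Calderón--Zygmund route: $L^2$ boundedness from the Fourier side, a weak type $(1,1)$ estimate, Marcinkiewicz interpolation for $1<p\le 2$, and duality for $2\le p<\infty$.

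\textbf{Step 1 ($L^2$ bound).} The case $p=2$ is the previous proposition with $s=0$. The symbol $m_k(\xi)=\frac{i\xi_k}{|\xi|}$ has modulus at most $1$, so by Plancherel $\|\mathcal{R}_k f\|_2\le\|f\|_2$.

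\textbf{Step 2 (kernel).} I would identify the convolution kernel of $\mathcal{R}_k$ away from the origin:
\[
\mathcal{R}_k f(x)=c_n\,\mathrm{p.v.}\int_{\mathbb{R}^n}\frac{x_k-y_k}{|x-y|^{n+1}}f(y)\,\mathrm{d}y,\qquad f\in\mathscr{S}(\mathbb{R}^n).
\]
To derive it, I would use the Fourier transform of $|x|^{-(n-1)}$, which is a multiple of $|\xi|^{-1}$. This follows from the Gaussian computation of Exercise~\ref{ex:gaussian} together with the subordination formula $|\xi|^{-1}=c\int_0^\infty t^{-1/2}e^{-t|\xi|^2}\,\mathrm{d}t$. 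Applying $\partial_k$ in $\mathscr{S}'$ then gives the kernel. The kernel $K(x)=c_n x_k/|x|^{n+1}$ satisfies $|K(x)|\lesssim |x|^{-n}$ and the Hörmander condition
\[
\int_{|x|>2|y|}|K(x-y)-K(x)|\,\mathrm{d}x\le C,
\]
which follows from the gradient bound $|\nabla K(x)|\lesssim |x|^{-n-1}$.

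\textbf{Step 3 (weak $(1,1)$ and interpolation).} For $f\in L^1\cap L^2$ and a level $\lambda>0$, I would perform the Calderón--Zygmund decomposition $f=g+\sum_j b_j$. Here $\|g\|_\infty\lesssim\lambda$ and $\|g\|_1\le\|f\|_1$. Each $b_j$ is supported on a dyadic cube $Q_j$, has mean zero, and $\sum_j|Q_j|\le\|f\|_1/\lambda$. The good part is handled by Step 1 and Chebyshev's inequality, as in the proof of \eqref{eq:sobolev-emb}:
\[
|\{|\mathcal{R}_k g|>\lambda/2\}|\le \frac{4}{\lambda^2}\|g\|_2^2\lesssim \frac{\|f\|_1}{\lambda}.
\]
For the bad part, I would discard the dilated cubes $\bigcup_j 2\sqrt n\,Q_j$. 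Off these cubes, the mean zero property of $b_j$ and the Hörmander condition give $\|\mathcal{R}_k b_j\|_{L^1((2\sqrt nQ_j)^c)}\lesssim\|b_j\|_1$. This yields the weak type $(1,1)$ bound. Marcinkiewicz interpolation between weak $(1,1)$ and strong $(2,2)$ then gives boundedness on $L^p$ for $1<p\le2$.

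\textbf{Step 4 (duality).} For $2<p<\infty$, I would use that the adjoint of $\mathcal{R}_k$ is $-\mathcal{R}_k$, since the symbol is purely imaginary and odd. Then
\[
\|\mathcal{R}_k\|_{L^p\to L^p}=\|\mathcal{R}_k\|_{L^{p'}\to L^{p'}},\qquad 1<p'<2,
\]
which is finite by Step 3. Finally, I would extend $\mathcal{R}_k$ from the dense subspace $\mathscr{S}(\mathbb{R}^n)$ to all of $L^p$ by density.

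The main obstacle is Step 3, specifically the off-diagonal estimate for the bad part. It requires both the precise kernel from Step 2 and the Hörmander regularity condition, and nothing earlier in the paper provides these, so they must be built up from scratch. If computing the kernel explicitly becomes too heavy, a fallback is to prove only the smoothness bounds $|\partial^\alpha m_k(\xi)|\lesssim|\xi|^{-|\alpha|}$ and derive the Hörmander condition through a Littlewood--Paley decomposition of the symbol, as in Mikhlin's multiplier theorem.
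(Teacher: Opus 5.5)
Your proposal follows the same route as the paper's sketch: the $L^2$ bound from the symbol, a Calder\'on--Zygmund decomposition giving weak type $(1,1)$, Marcinkiewicz interpolation for $1<p\le 2$, and duality for $p>2$, and you correctly supply the kernel and H\"ormander-condition details that the paper leaves out. One small caveat: your derivation of the kernel via the Fourier transform of $|x|^{-(n-1)}$ and subordination only works for $n\ge 2$ (for $n=1$, $|x|^{0}=1$ has transform a multiple of $\delta_0$ and the subordination integral diverges), so for the Hilbert transform compute the transform of $\mathrm{p.v.}\,\frac{1}{x}$ directly or use your Mikhlin-type fallback.
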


\begin{proof}[Sketch of the proof]
First, we just proved that the Riesz transforms are bounded on $L^2({\mathds{R}}^n)$ (previous proposition with $s=0$).

\noindent
Then, using {\tt Calder\'on-Zygmund decomposition}, we prove that for all $k\in\{1,...,n\}$, ${\mathcal{R}}_k$ is of weak type $(1,1)$, that is
\[
\sup_{\lambda>0}\bigl(\lambda\bigl|\{|{\mathcal{R}}_k f|>\lambda\}\bigr|\bigr)\lesssim \|f\|_1, \quad f\in L^1({\mathds{R}}^n).
\]

\noindent
We then argue by interpolation ({\tt Marcinkiewicz interpolation} theorem) to prove that for all $k\in\{1,...,n\}$, ${\mathcal{R}}_k$
is bounded on $L^p({\mathds{R}}^n)$, $1<p\le 2$.

\noindent
We conclude by duality to prove the boundedness on $L^q({\mathds{R}}^n)$, $2\le q <\infty$.
\end{proof}

\section{The heat equation}

\subsection{The stationary case}

We consider the following equation
\begin{equation}
\label{chaleurstationnaire}
u - \Delta u = f, \quad a.e. \mbox{ in } {\mathds{R}}^n
\end{equation}
for $f\in L^2({\mathds{R}}^n)$. We are looking for a solution $u\in H^2({\mathds{R}}^n)$ satisfying \eqref{chaleurstationnaire}.

\begin{theorem}
For all $f\in L^2({\mathds{R}}^n)$, there exists a unique solution $u\in H^2({\mathds{R}}^n)$ of \eqref{chaleurstationnaire} given by
\begin{equation} 
\label{solstat}
u ={\mathscr{F}}^{-1} \left( \xi\mapsto \tfrac{1}{1 +|\xi|^2} \  \hat{f}(\xi) \right).
\end{equation}
\end{theorem}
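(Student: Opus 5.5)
The plan is to work entirely on the Fourier side, using that ${\mathscr{F}}$ is an isomorphism of ${\mathscr{S}}'({\mathds{R}}^n)$ and that ${\mathscr{F}}(-\Delta u)(\xi)=|\xi|^2\hat u(\xi)$, as established in the subsection on the Laplacian. Applying ${\mathscr{F}}$ turns \eqref{chaleurstationnaire} into the pointwise algebraic identity
\[
(1+|\xi|^2)\,\hat u(\xi)=\hat f(\xi), \quad \mbox{for a.e. } \xi\in{\mathds{R}}^n .
\]
Since $1+|\xi|^2\ge 1$ never vanishes, this can be solved by division, which leads to the formula \eqref{solstat}.

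For existence, I define $u$ by \eqref{solstat}. Because $\hat f\in L^2({\mathds{R}}^n)$ and $(1+|\xi|^2)^{-1}\le 1$, the function $\xi\mapsto (1+|\xi|^2)^{-1}\hat f(\xi)$ is in $L^2$. Hence $u\in L^2({\mathds{R}}^n)\subset{\mathscr{S}}'({\mathds{R}}^n)$ by the $L^2$ Fourier theory. Moreover, $(1+|\xi|^2)\hat u=\hat f\in L^2$, so by Definition~\ref{def:sobolev} with $s=2$ we get $u\in H^2({\mathds{R}}^n)$ with $\|u\|_{H^2}=\|f\|_2$. It remains to check that $u$ actually solves the equation. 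In ${\mathscr{S}}'$ we have ${\mathscr{F}}(u-\Delta u)=(1+|\xi|^2)\hat u=\hat f$, so $u-\Delta u=f$ as tempered distributions. Since both $u$ and $f$ lie in $L^2$, this forces $\Delta u=u-f\in L^2$, and the equality then holds almost everywhere.

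For uniqueness, let $u_1,u_2\in H^2$ be two solutions and set $w=u_1-u_2$. Then $w\in H^2$ and $w-\Delta w=0$, so $(1+|\xi|^2)\hat w=0$ a.e., which gives $\hat w=0$ a.e. By injectivity of ${\mathscr{F}}$ on $L^2$ (Plancherel), $w=0$.

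The one point needing care is the justification of the Fourier-side identity ${\mathscr{F}}(\Delta u)=-|\xi|^2\hat u$ for an $H^2$ (or merely ${\mathscr{S}}'$) function, which lets us pass between "a.e. in ${\mathds{R}}^n$" and equality in ${\mathscr{S}}'$. I will rely on the distributional version of \eqref{eq:diff-mult} (the exercise following the proposition on derivatives in ${\mathscr{S}}'$), together with the fact that an $L^2$ function whose Fourier transform vanishes is zero a.e. Everything else is routine.
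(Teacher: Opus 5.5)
Your proposal is correct and follows essentially the same route as the paper. For uniqueness you show $(1+|\xi|^2)\hat w=0$ for the difference of two $H^2$ solutions; for existence you check that \eqref{solstat} defines an element of $H^2$ with $(1+|\xi|^2)\hat u=\hat f$. Your passage through ${\mathscr{S}}'$ (instead of the paper's direct use of ${\mathscr{F}}(\partial_j\partial_k w)=(i\xi_j)(i\xi_k)\hat w$) and your sharper identity $\|u\|_{H^2}=\|f\|_2$ (the paper only states $\|u\|_2\le\|f\|_2$) are harmless refinements of the same argument.
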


\begin{proof}
Uniqueness : Assume that there are two solutions $u,v \in H^2({\mathds{R}}^n)$ of \eqref{chaleurstationnaire}.
Let then $w=u-v$ : $w \in H^2({\mathds{R}}^n)$ and satisfies $w=\Delta w$. Applying Fourier transform to this equation, we obtain
\[
\hat{w}(\xi) = -|\xi|^2 \hat{w}(\xi), \quad a.e.\ \xi\in{\mathds{R}}^n.
\]
Indeed, since $w \in H^2({\mathds{R}}^n)$, the partial derivatives of order 2, $\partial_j\partial_kw$, are in $L^2({\mathds{R}}^n)$ 
for all $j,k=1,\dots, n$ and we have that  
\[
{\mathscr{F}}(\partial_j\partial_k w)(\xi) = (i\xi_j)(i\xi_k) \hat{w}(\xi), \quad a.e.
\]
Therefore, for almost all $\xi \in {\mathds{R}}^n$, we have that $(1+|\xi|^2)\hat{w}(\xi)=0$, so that $\hat{w} =0$ as a function of 
$L^2({\mathds{R}}^n)$. Thanks to the fact that the Fourier transform is an isomorphism on $L^2({\mathds{R}}^n)$, we conclude that $w$ is
zero and then $u=v$.

\noindent
Existence : we show that the function given by \eqref{solstat} is solution of \eqref{chaleurstationnaire}. Let $u$ be the function
given by \eqref{solstat}. We have then 
\[
\hat{u} (\xi) = \tfrac{1}{1 +|\xi|^2} \  \hat{f}(\xi), \quad a.e. \ t\in {\mathds{R}}^n.
\]
Since $f \in L^2({\mathds{R}}^n)$, $ \hat{f}\in L^2({\mathds{R}}^n)$ and by definition of $H^2({\mathds{R}}^n)$, it is
clear that $u\in  H^2({\mathds{R}}^n)$. Moreover, we have  
\[
{\mathscr{F}} (u -\Delta u)(\xi) = \hat{u}(\xi) + \sum_{k=1}^n |\xi_k|^2 \hat{u}(\xi) = (1 +|\xi|^2)\hat{u}(\xi) = \hat{f}(\xi), \quad a.e.\ \xi\in {\mathds{R}}^n,
\]
by the definition of $u$. By taking the inverse Fourier transform, we obtain that $u$ is a solution of \eqref{chaleurstationnaire}. 
Moreover, $\|u\|_2 \le \|f\|_2$.
\end{proof}

\begin{remark}
One could also consider the equation $\lambda u - \Delta u = f \in L^2({\mathds{R}}^n)$ which can be solved exactly as
\eqref{chaleurstationnaire} as soon as $\lambda \in {\mathds{C}} \setminus ]-\infty , 0]$. 
\end{remark}

\begin{remark}
The same proof is valid if $f\in H^s({\mathds{R}}^n)$ for $s\in {\mathds{R}}$. We find then a solution $u \in H^{s+2}({\mathds{R}}^n)$.
\end{remark}

\subsection{The evolution case}

Let's consider now the evolution heat equation
\begin{equation}
\label{chaleur}
\left\{
\begin{array}{rcll}
\partial_t u - \Delta u &=& 0 & \mbox{ in } (0,\infty) \times {\mathds{R}}^n\\
u(0,x) & = & u_0(x) &  \mbox{ in } {\mathds{R}}^n,
\end{array}
\right.
\end{equation}
for an initial data $u_0 \in L^2({\mathds{R}}^n)$. We are looking for a solution $u$ continuous in time ($t$) with $\partial_tu$ also
continuous in time, and $u$ smooth in space ($x$).

\begin{theorem}
For all $u_0 \in L^2({\mathds{R}}^n)$, there exists a unique solution $u$ of \eqref{chaleur} satisfying for all fixed $x \in {\mathds{R}}^n$,
the map $t \mapsto u(t,x)$ is in ${\mathscr{C}}([0,\infty[ ) \cap {\mathscr{C}}^1(]0,\infty[)$ and for fixed $t > 0$, the map 
$x \mapsto u(t,x)$ is in ${\mathscr{S}}({\mathds{R}}^n)$. This solution is given by
\begin{equation} 
\label{noyau}
u(t,x) = \tfrac{1}{(4\pi t)^{\frac n2}} \int_{{\mathds{R}}^n} e^{-\frac{|x-y|^2}{4t}} u_0(y) dy, \quad t>0, \ x \in {\mathds{R}}^n.
\end{equation}
\end{theorem}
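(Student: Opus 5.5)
Take the Fourier transform in the space variable $x$ only, which turns \eqref{chaleur} into a family of ordinary differential equations indexed by $\xi$, and then read both existence and uniqueness off the explicit solution.

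\textbf{Existence.} Set $\hat u(t,\xi)=e^{-t|\xi|^2}\hat{u}_0(\xi)$ and $u(t,\cdot)={\mathscr{F}}^{-1}\bigl(e^{-t|\cdot|^2}\hat u_0\bigr)$.
\begin{enumerate}
\item Identify $u$ with the kernel formula \eqref{noyau}. By Exercise~\ref{ex:gaussian} and Theorem~\ref{thm:Fourier1} (point 4, with $\lambda$ chosen so that $\lambda^{-2}=2t$), the function $\xi\mapsto e^{-t|\xi|^2}$ is the Fourier transform of a multiple of the Gaussian $G_t(x)=(2t)^{-n/2}e^{-|x|^2/(4t)}$. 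Point 3 of Theorem~\ref{thm:Fourier1}, extended from $L^1$ to $u_0\in L^2$ by density and Plancherel, then gives $u(t)=G_t*u_0$. With the normalisation ${\rm dm}_n=(2\pi)^{-n/2}{\rm d}\lambda_n$ of the convolution, this is exactly \eqref{noyau}.
\item Show $u(t,\cdot)$ is smooth in space for $t>0$. All polynomial weights $\xi^\alpha e^{-t|\xi|^2}\hat u_0$ lie in $L^1$, since $\hat u_0\in L^2$ and the Gaussian factor lies in every weighted $L^2$. So every $D_\alpha u(t,\cdot)$ is bounded and continuous by Riemann--Lebesgue (Theorem~\ref{thm:RL}). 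Decay in $x$ would come from derivatives in $\xi$, but the bare statement $\hat u_0\in L^2$ gives no $\xi$-derivatives. The plan is to work on the kernel side instead: $\partial_x^\beta$ falls on $G_t\in{\mathscr{S}}$, and $x^\alpha$ is split using $|x|\le |x-y|+|y|$. This still needs moments of $u_0$, so I would either prove the weaker $H^\infty$/${\mathscr{C}}_0^\infty$ regularity for general $u_0$ and note that the Schwartz claim holds once $u_0$ is, for instance, rapidly decaying, or read the statement's regularity in that sense.
\item Time regularity for fixed $x$. For $t>0$, differentiate under the integral (in $\xi$ or in $y$), dominating by $|\xi|^2e^{-\varepsilon|\xi|^2}|\hat u_0|\in L^1$ for $t\ge\varepsilon$. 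This gives $u\in{\mathscr{C}}^1(]0,\infty[)$ and $\partial_tu=\Delta u$, because $\partial_t$ and $\Delta$ both become multiplication by $-|\xi|^2$.
\item Continuity at $t=0$. Here $e^{-t|\xi|^2}\hat u_0\to\hat u_0$ in $L^2$ by dominated convergence, so $u(t)\to u_0$ in $L^2$. Pointwise continuity for every fixed $x$ only makes sense when $u_0$ has pointwise values. If $u_0$ is continuous and bounded, I would use the approximate identity $G_t*u_0\to u_0$, proved in the same way as the inversion step in Plancherel's theorem.
\end{enumerate}

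\textbf{Uniqueness.} Let $w$ be the difference of two solutions. Then $w(0)=0$ and, for each $t>0$, $w(t,\cdot)\in{\mathscr{S}}$. Apply ${\mathscr{F}}_x$: for each fixed $\xi$, $\hat w(t,\xi)$ satisfies $\partial_t\hat w=-|\xi|^2\hat w$ on $]0,\infty[$, so $\hat w(t,\xi)=e^{-(t-s)|\xi|^2}\hat w(s,\xi)$ for $0<s<t$. Interchanging $\partial_t$ with the Fourier integral needs domination of $\partial_tw(t,\cdot)=\Delta w(t,\cdot)$ in $L^1$, locally uniformly in $t$.

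Letting $s\to0$ requires $\hat w(s,\xi)\to0$, i.e.\ continuity at $0$ in a norm stronger than pointwise, for instance in $L^2$. Alternatively, an energy argument works: $\frac{\rm d}{{\rm d}t}\|w(t)\|_2^2=-2\|\nabla w(t)\|_2^2\le0$, so $t\mapsto\|w(t)\|_2$ is nonincreasing and tends to $0$ as $t\to0$, which forces $w\equiv0$.

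\textbf{Main obstacle.} The difficulty lies in these interchanges and the behaviour at $t=0$. The statement only asks for pointwise continuity in time for each $x$, which by itself does not rule out non-unique solutions; the classical Tychonoff example shows non-uniqueness without growth control. So I expect to add, or extract from the class of solutions, a uniform integrability condition, namely $w\in{\mathscr{C}}([0,\infty[;L^2)$. This condition is satisfied by the solution constructed above, and it makes both the energy argument and the Fourier ODE argument rigorous.
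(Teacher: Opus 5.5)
Your route is the paper's route. You take the Fourier transform in $x$, identify ${\mathscr{F}}^{-1}(e^{-t|\cdot|^2}\hat u_0)$ with $G_t*u_0$ through the Gaussian and the scaling rule, use dominated convergence for $\|u(t)-u_0\|_2\to0$, and use the frequency-wise ODE $\varphi_\xi'=-|\xi|^2\varphi_\xi$ for uniqueness. One slip: with the convention in point~4 of Theorem~\ref{thm:Fourier1} you need $\lambda^2=2t$, not $\lambda^{-2}=2t$, and then $\widehat{G_t}=e^{-t|\cdot|^2}$ exactly. Your reservations are justified, and they expose real gaps in the paper's proof. The paper calls the regularity ``clear'', proves continuity at $t=0$ only in $L^2$, and sets $\varphi_\xi(0)=0$ without showing $\varphi_\xi(t)\to0$ as $t\to0^+$.

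Read literally, the statement fails for both existence and uniqueness, so a reformulation like yours is unavoidable.

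\textbf{Existence.} The datum $u_0(y)=(1+|y|)^{-n}$ lies in $L^2({\mathds{R}}^n)$. Restricting the kernel integral to $|y-x|\le1$ gives $u(t,x)\ge c_t\,(2+|x|)^{-n}$ with $c_t>0$, so $u(t,\cdot)\notin{\mathscr{S}}({\mathds{R}}^n)$.

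\textbf{Uniqueness.} Tychonoff's example does not apply, since it is not Schwartz in $x$. The relevant counterexample is the dipole $w(t,x)=\partial_{x_1}G_t(x)=-\frac{x_1}{2t}G_t(x)$ for $t>0$, with $w(0,\cdot)=0$. It solves the heat equation and is Schwartz for each $t>0$. It vanishes on $\{x_1=0\}$, and for every other $x$ it is $O\bigl(t^{-1-\frac n2}e^{-|x|^2/(4t)}\bigr)\to0$. So it lies in the stated class with $u_0=0$, yet $\widehat{w(t)}(\xi)=i\xi_1e^{-t|\xi|^2}\not\to0$ and $\|w(t)\|_2\to\infty$. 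This is exactly where the step $\varphi_\xi(0)=0$ breaks down. It shows that an extra hypothesis such as $w\in{\mathscr{C}}([0,\infty);L^2({\mathds{R}}^n))$ is necessary, not merely convenient.

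\textbf{One refinement.} That hypothesis only controls the limit $t\to0$. To differentiate $\|w(t)\|_2^2$ or $\hat w(t,\xi)$ under the integral, you still need the local-in-time domination you mention yourself. A convenient form is $w\in{\mathscr{C}}^1((0,\infty);L^2({\mathds{R}}^n))$, which your constructed solution satisfies. With it, your energy argument $\frac{\rm d}{{\rm d}t}\|w(t)\|_2^2=-2\|\nabla w(t)\|_2^2\le0$ is complete.
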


\begin{definition}
The function $(0,\infty)\times{\mathds{R}}^n\ni (t,x)\mapsto p_t(x)=\frac{1}{(4\pi t)^{\frac n2}}\,e^{-\frac{|x|^2}{4t}}$ is called
the {\tt heat kernel} associated with the heat equation in ${\mathds{R}}^n$.
\end{definition}

\begin{proof}
As for the problem \eqref{chaleurstationnaire}, we apply the Fourier transform in the $x$ variable to the equation \eqref{chaleur}. 
Uniqueness is proven as follows. Assume that there are two solutions $u$ and $v$ satisfying the conditions of the theorem. Then
we set $w=u-v$ and we have that
\[
\partial_t w = \Delta w, \quad w(0,x) = 0.  
\]
Applying the Fourier transform in the $x$ variable to this equation, we obtain, for fixed $\xi\in{\mathds{R}}^n$
\begin{equation} 
\label{phi}
\varphi_\xi'(t) = -|\xi|^2 \varphi_\xi (t), \ t>0, \quad \varphi_\xi(0) = 0,
\end{equation}
where $\varphi_\xi (t) = {\mathscr{F}} (x \mapsto w(t,x))(\xi)$ for $t>0$. Therefore, \eqref{phi} is an ODE in $t$ that
can be solved as follows:
\[
\varphi_\xi(t) = \varphi_\xi(0) e^{-t|\xi|^2} = 0.
\]
This shows that $w(t,x) = {\mathscr{F}}^{-1} (\xi \mapsto \varphi_\xi(t))(x) = 0$.

\noindent
Existence : we show that the function $u$ given by \eqref{noyau} is a solution of \eqref{chaleur} as stated in the theorem. 
It is clear that $u$ satisfies the regularity hypotheses given in the theorem. The continuity at $t=0$ is worth looking into. Let's start 
with $t>0$:
\[
u(t,x) = q_t * u_0 (x), \quad x \in {\mathds{R}}^n,
\]
where $q_t (x) =(2t)^{- \frac{n}{2}} \ e^{-\frac{|x|^2}{4t}}$, if we define the convolution with the normalised Lebesgue measure ${\rm dm}_n$. 
Therefore, we have 
\[
{\mathscr{F}}(x \mapsto u(t,x))(\xi) = \hat{q}_t(\xi) \hat{u}_0(\xi), \quad \xi\in{\mathds{R}}^n.
\]
It is easy to compute $\hat{q}_t(\xi)$ thanks to Exercise~\ref{ex:gaussian} and Theorem~\ref{thm:Fourier1}:
for all $\xi \in {\mathds{R}}^n$ and $t>0$, $\hat{q}_t(\xi) = e^{-t|\xi|^2}$. Therefore
\begin{align*}
{\mathscr{F}} \bigl(\partial_t (q_t * u_0 ) \bigr)(\xi) &=  \partial_t \bigl(\hat{q}_t(\xi) \hat{u}_0(\xi) \bigr) \ =\  -|\xi|^2 \hat{q}_t(\xi) \hat{u}_0(\xi) \\
&=  -|\xi|^2 {\mathscr{F}} (q_t * u_0 ) (\xi) \ = \ {\mathscr{F}} (\Delta (q_t * u_0 ))(\xi).
\end{align*}
Since the Fourier transform is invertible in $L^2({\mathds{R}}^n)$, we obtain $\partial_t u = \Delta u$ for almost every
$(t,x)\in(0,\infty)\times{\mathds{R}}^n$. It remains to prove that $u$ is continuous at $t=0$, $i.e.$,
\[
\lim_{t\to 0^+} \| u(t,\cdot) - u_0\|_2 = 0,
\]
or, equivalently, since the Fourier transform is an isometry
\[
\lim_{t\to 0^+} \| (\hat{q}_t -1) \hat{u}_0\|_2 = 0.
\]
We have, for all $\xi\in{\mathds{R}}^n$,
\[
|e^{-t|\xi|^2} - 1| \xrightarrow[t\to 0^+]{} 0
\]
and
\[
|(\hat{q}_t -1) \hat{u}_0|\le 2 |\hat{u}_0|\in L^2({\mathds{R}}^n).
\]
This implies that, thanks to the dominated convergence theorem, 
\[
\| u(t,\cdot) - u_0\|_2 = \| (\hat{q}_t -1) \hat{u}_0\|_2 
=\Bigl(\int_{{\mathds{R}}^n} |e^{-t|\xi|^2} - 1|^2 |\hat{u}_0(\xi)|^2\,{\rm dm}_n(\xi) \Bigr)^{\frac{1}{2}}\xrightarrow[t\to 0^+]{} 0. \qedhere
\]
\end{proof}

\begin{definition}[Heat semigroup]
For $u_0\in \dot H^s({\mathds{R}}^n)$, we define $e^{t\Delta}u_0$ for $t\ge 0$ by 
\[
e^{t\Delta}u_0=q_t * u_0 ={\mathscr{F}}^{-1}\bigl(\xi\mapsto e^{-t|\xi|^2}\hat{u}_0(\xi)\bigr), \quad t>0\quad \mbox{ and }\quad e^{0\Delta}u_0=u_0.
\]
The family of operators $(e^{t\Delta})_{t\ge0}$ is called the {\tt heat semigroup}.
\end{definition}

\noindent
We can also consider the heat equation with a forcing term: \eqref{chaleur} becomes
\begin{equation}
\label{eq:chaleur}
\left\{
\begin{array}{rcll}
\partial_t u - \Delta u &=& f & \mbox{ in } (0,\infty) \times {\mathds{R}}^n\\
u(0,x) & = & u_0(x) &  \mbox{ in } {\mathds{R}}^n,
\end{array}
\right.
\end{equation}
for smooth (or integrable) enough $f : (0,\infty)\times {\mathds{R}}^n\to {\mathds{R}}$. Recall that for an ordinary 
differential equation, the solution of 
\[
\varphi'(t)+\omega \,\varphi(t) = g(t), \ t>0, \quad \varphi(0)=0,
\]
for a given $\omega\in{\mathds{R}}$ and a continuous function $g:[0,\infty)\to {\mathds{R}}$, is given by the variation of constant formula
\[
\varphi(t)=\int_0^t e^{-\omega(t-s)}g(s)\,{\rm d}s, \quad t\ge 0.
\]

\begin{theorem}
\label{thm:classicalsol}
Let $f:[0,\infty)\times{\mathds{R}}^n\to {\mathds{R}}$ be continuous in the first variable, with values in $L^2({\mathds{R}}^n)$. Assume
moreover that $\partial_tf\in L^1(0,\infty;L^2({\mathds{R}}^n))$ so that $f(t)=f(0)+\int_0^t \partial_sf(s)\,{\rm d}s$, $t\ge 0$.
Then there is a unique function $u\in {\mathscr{C}}([0,\infty);L^2({\mathds{R}}^n))\cap {\mathscr{C}}^1([0,\infty);L^2({\mathds{R}}^n))
\cap {\mathscr{C}}([0,\infty);H^2({\mathds{R}}^n))$ solution of 
\[
\partial_tu-\Delta u =f \mbox{ in }(0,\infty)\times{\mathds{R}}^n, \quad u(0,\cdot)=0.
\]
Moreover, for all $t>0$, $u$ is given by
\begin{equation}
\label{eq:solchaleur}
u(t,\cdot)=\int_0^te^{(t-s)\Delta}f(s,\cdot)\,{\rm d}s
={\mathscr{F}}^{-1}\Bigl(\xi\mapsto \int_0^te^{-(t-s)|\xi|^2}{\mathscr{F}}(f(s,\cdot))(\xi)\,{\rm d}s\Bigr).
\end{equation}
\end{theorem}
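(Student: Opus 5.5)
The plan is to work entirely on the Fourier side in the space variable, as in the proofs of the stationary and evolution cases above. Set $g(s,\xi)={\mathscr{F}}(f(s,\cdot))(\xi)$ and define $u$ by \eqref{eq:solchaleur}, that is
\[
\hat u(t,\xi)=\int_0^t e^{-(t-s)|\xi|^2}g(s,\xi)\,{\rm d}s .
\]
This is the variation of constants formula for the ODE $\varphi_\xi'+|\xi|^2\varphi_\xi=g(\cdot,\xi)$, $\varphi_\xi(0)=0$, solved for each fixed $\xi$.

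Uniqueness is exactly as in the homogeneous case. If $u,v$ are two solutions in the stated class, then $w=u-v$ solves $\partial_tw=\Delta w$ with $w(0)=0$. After Fourier transform in $x$, for a.e. $\xi$ the function $t\mapsto \hat w(t,\xi)$ is $\mathscr C^1$ with zero initial value and satisfies $\varphi'=-|\xi|^2\varphi$. Hence $\hat w=0$, and by Plancherel $w=0$.

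For existence, the key trick is to integrate by parts in $s$ using the assumption $f(t)=f(0)+\int_0^t\partial_sf$. For $\xi\neq0$ this gives
\[
\hat u(t,\xi)=\frac{1-e^{-t|\xi|^2}}{|\xi|^2}\,g(0,\xi)+\int_0^t\frac{1-e^{-(t-s)|\xi|^2}}{|\xi|^2}\,\partial_sg(s,\xi)\,{\rm d}s .
\]
From this formula:
\begin{itemize}
\item[$\cdot$] Since $0\le\frac{1-e^{-\tau|\xi|^2}}{|\xi|^2}\le \min(\tau,|\xi|^{-2})$, the transform $\hat u(t)$ is bounded in $L^2$ by $t\|f(0)\|_2+t\|\partial_sf\|_{L^1L^2}$.
\item[$\cdot$] Likewise $|\xi|^2\hat u(t,\xi)=(1-e^{-t|\xi|^2})g(0,\xi)+\int_0^t(1-e^{-(t-s)|\xi|^2})\partial_sg\,{\rm d}s$ is bounded in $L^2$ by $\|f(0)\|_2+\|\partial_sf\|_{L^1L^2}$.
\end{itemize}
Together these give $u(t)\in H^2$ with locally uniform bounds.

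Next, differentiating the original formula in $t$ gives, by Leibniz' rule,
\[
\partial_t\hat u=g(t,\xi)-|\xi|^2\hat u(t,\xi)=e^{-t|\xi|^2}g(0,\xi)+\int_0^te^{-(t-s)|\xi|^2}\partial_sg(s,\xi)\,{\rm d}s .
\]
The first equality is the equation $\partial_tu-\Delta u=f$ on the Fourier side. The second shows $\partial_tu(t)\in L^2$ with $\|\partial_tu(t)\|_2\le\|f(0)\|_2+\|\partial_sf\|_{L^1(0,\infty;L^2)}$.

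The step I expect to be the main obstacle is continuity in time with values in $L^2$ and $H^2$. This requires the differentiation above to hold in the $L^2$ (Bochner) sense, not just pointwise in $\xi$. I would handle it term by term:
\begin{itemize}
\item[$\cdot$] The term $e^{-t|\xi|^2}g(0,\xi)$ is continuous in $L^2$ by dominated convergence, as in the proof of continuity at $t=0$ for \eqref{chaleur}.
\item[$\cdot$] For the convolution term $\int_0^te^{-(t-s)|\xi|^2}\partial_sg\,{\rm d}s$, write the difference at times $t$ and $t+h$. One piece is bounded by $\int_t^{t+h}\|\partial_sf\|_2\,{\rm d}s$. The other is bounded by $\int_0^t\|(e^{-(t+h-s)|\cdot|^2}-e^{-(t-s)|\cdot|^2})\partial_sg(s)\|_2\,{\rm d}s$, which tends to $0$ by dominated convergence, first in $\xi$ and then in $s$, since the integrand is dominated by $2\|\partial_sf(s)\|_2\in L^1$.
\end{itemize}
This gives $\partial_tu\in\mathscr C([0,\infty);L^2)$. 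Then $\Delta u=\partial_tu-f$ is continuous in $L^2$, and continuity of $u$ in $L^2$ (via the fundamental theorem of calculus, or the same estimates) yields $u\in\mathscr C([0,\infty);H^2)\cap\mathscr C^1([0,\infty);L^2)$. Finally, $u(0)=0$ is immediate from the formula.
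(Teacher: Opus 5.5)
Your proposal is correct and follows essentially the same route as the paper: Fourier transform in $x$, integration by parts in $s$ using $f(t)=f(0)+\int_0^t\partial_sf(s)\,{\rm d}s$, dominated convergence for continuity in time, and the equation to pass regularity between $\partial_tu$ and $\Delta u$ (the paper proves $\Delta u\in{\mathscr{C}}([0,\infty);L^2)$ and deduces $\partial_tu$ from it, while you do the reverse). Your boundary term $e^{-t|\xi|^2}{\mathscr{F}}(f(0,\cdot))(\xi)$ is the correct one, since the paper's displayed identity omits the factor $e^{-t|\xi|^2}$; your explicit uniqueness argument and your fundamental-theorem-of-calculus step for $L^2$-valued differentiability also fill in points the paper leaves implicit.
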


\begin{proof}
Let $u$ be given by \eqref{eq:solchaleur}. Then $u$ belongs to ${\mathscr{C}}([0,\infty);L^2({\mathds{R}}^n))$: this is immediate 
by composition and the dominated convergence theorem, since 
$\displaystyle{\sup_{\stackrel{\xi\in{\mathds{R}}^n}{0\le s\le t}}|e^{-(t-s)|\xi|^2}|\le 1}$.

\noindent
It follows then that $u(0,\cdot)=0$.

\noindent
To prove that $u\in {\mathscr{C}}([0,\infty);\dot H^2({\mathds{R}}^n))$, it suffices to prove that 
\[
(t,\xi)\mapsto \int_0^t |\xi|^2 e^{-(t-s)|\xi|^2}{\mathscr{F}}(f(s,\cdot))(\xi)\,{\rm d}s
\] 
belongs to ${\mathscr{C}}([0,\infty);L^2({\mathds{R}}^n))$. Using an integration by parts (in $s$) and
the equality $f(t)=f(0)+\int_0^t \partial_sf(s)\,{\rm d}s$,
we have that
\begin{align*}
&\int_0^t |\xi|^2 e^{-(t-s)|\xi|^2}{\mathscr{F}}(f(s,\cdot))(\xi)\,{\rm d}s\\
=& {\mathscr{F}}(f(t,\cdot))(\xi) - {\mathscr{F}}(f(0,\cdot))(\xi) -\int_0^te^{-(t-s)|\xi|^2}{\mathscr{F}}(\partial_sf(s,\cdot))(\xi)\,{\rm d}s
\end{align*}
The first two terms are clearly continuous in time with values in $L^2({\mathds{R}}^n)$. The integral term is also continuous by the 
dominated convergence theorem.

\noindent
Finally, it is immediate that $u$ satisfies $\partial_t u-\Delta u = f$ (clear on the Fourier side), so that 
$\partial_t u\in {\mathscr{C}}([0,\infty);L^2({\mathds{R}}^n))$ and then $u\in  {\mathscr{C}}^1([0,\infty);L^2({\mathds{R}}^n))$.
\end{proof}

\begin{definition}
A function $u\in {\mathscr{C}}^1([0,\infty);L^2({\mathds{R}}^n))\cap {\mathscr{C}}([0,\infty);H^2({\mathds{R}}^n))$ satisfying
\[
\partial_t u-\Delta u =f \quad \mbox{and} \quad u(0)=u_0
\]
for $f\in {\mathscr{C}}([0,\infty);L^2({\mathds{R}}^n))$ and $u_0\in L^2({\mathds{R}}^n)$
is called a {\tt classical solution} of \eqref{eq:chaleur}.
\end{definition}

\begin{exercise}
Let $s\in{\mathds{R}}$.
Prove that if $f$ satisfies the assumptions of Theorem~\ref{thm:classicalsol} with $L^2({\mathds{R}}^n)$ replaced by 
$\dot H^s({\mathds{R}}^n)$, then the solution $u$ given by the formula \eqref{eq:solchaleur} belongs to the space
${\mathscr{C}}([0,\infty);\dot H^{s+2}({\mathds{R}}^n))\cap {\mathscr{C}}^1([0,\infty);\dot H^s({\mathds{R}}^n))$ and
satisfies $\partial_tu-\Delta u=f$, $u(0, \cdot)=0$.
\end{exercise}

\subsection{Maximal regularity}

In this paragraph, we will focus on {\tt mild solutions} of \eqref{eq:chaleur} with $u_0=0$, as in the previous paragraph, but now,
we lower the conditions on $f$. The question is the following:
\begin{center}
\begin{minipage}[t]{12cm}
for $1<p<\infty$ and $f\in L^p(0,\infty;L^2({\mathds{R}}^n))$, does $u$ given by \eqref{eq:solchaleur} belong to 
$L^p(0,\infty;\dot H^2({\mathds{R}}^n))$ with $\partial_t u \in L^p(0,\infty; L^2({\mathds{R}}^n))$?
\end{minipage}
\end{center}
In other words, we are interested in whether the terms ($\partial_tu$ and $\Delta u$) in \eqref{eq:chaleur} with $u_0=0$ 
belong to $L^p(0,\infty; L^2({\mathds{R}}^n))$. We will start with the case $p=2$

\begin{proposition}[Maximal regularity -- $L^2$]
\label{prop:MRL2}
For all $f\in L^2(0,\infty; L^2({\mathds{R}}^n))$, there exists a unique $u\in L^2(0,\infty; \dot H^2({\mathds{R}}^n))$ with
$\partial_t u\in L^2(0,\infty; L^2({\mathds{R}}^n))$ satisfying $\partial_t u-\Delta u=f$ for almost all $(t,x)\in (0,\infty)\times{\mathds{R}}^n$
and $u(0)=0$. Moreover, $u$ is given by \eqref{eq:solchaleur}.
\end{proposition}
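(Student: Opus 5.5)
The plan is to work entirely on the Fourier side in the space variable, for almost every fixed $\xi$, and to reduce the estimate to a one-dimensional convolution inequality in time. Write $g(s,\xi)={\mathscr{F}}(f(s,\cdot))(\xi)$. By Plancherel (Fubini in $(s,\xi)$), $g\in L^2((0,\infty)\times{\mathds{R}}^n)$ with the same norm as $f$. The candidate solution is $u$ given by \eqref{eq:solchaleur}, that is
\[
\hat u(t,\xi)=\int_0^t e^{-(t-s)|\xi|^2}g(s,\xi)\,{\rm d}s .
\]
The first task is to show that $|\xi|^2\hat u\in L^2((0,\infty)\times{\mathds{R}}^n)$, which is exactly $u\in L^2(0,\infty;\dot H^2({\mathds{R}}^n))$.

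For fixed $\xi\neq0$, set $k_\xi(t)=|\xi|^2e^{-t|\xi|^2}{\mathds{1}}_{(0,\infty)}(t)$. Then $|\xi|^2\hat u(\cdot,\xi)=k_\xi * g(\cdot,\xi)$, a convolution on ${\mathds{R}}$ with $g$ extended by $0$ for negative times. Since $\int_0^\infty k_\xi(t)\,{\rm d}t=1$, Young's inequality $\|k*h\|_2\le\|k\|_1\|h\|_2$ (the first Proposition of the paper, adjusted for the normalisation of ${\rm dm}_1$) gives
\[
\int_0^\infty |\xi|^4|\hat u(t,\xi)|^2\,{\rm d}t\le \int_0^\infty|g(s,\xi)|^2\,{\rm d}s .
\]
This bound is uniform in $\xi$. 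Integrating in $\xi$ and using Plancherel then yields $\|\Delta u\|_{L^2(0,\infty;L^2)}\le\|f\|_{L^2(0,\infty;L^2)}$.

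I expect the main obstacle to be this uniform-in-$\xi$ bound. The $L^1$ norm of the kernel is scale invariant, which is precisely why no constant depending on $|\xi|$ appears. An alternative route is to extend to $t\in{\mathds{R}}$ and take the Fourier transform in time: the multiplier $\frac{|\xi|^2}{i\tau+|\xi|^2}$ has modulus at most $1$, and this gives the same conclusion.

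Next I need the time derivative. For fixed $\xi$, $\hat u(\cdot,\xi)$ is absolutely continuous (it is the variation of constants formula for the ODE recalled before Theorem~\ref{thm:classicalsol}). It satisfies
\[
\partial_t\hat u=-|\xi|^2\hat u+g,\qquad \hat u(0,\xi)=0 .
\]
Hence $\partial_t u=\Delta u+f\in L^2(0,\infty;L^2)$, and the equation holds almost everywhere. To justify differentiation of $\hat u$ in $t$ as an $L^2$-valued function, I would argue distributionally in $t$, testing against smooth compactly supported functions of $t$ and using Fubini. Alternatively I could approximate $f$ by functions satisfying the hypotheses of Theorem~\ref{thm:classicalsol} and pass to the limit using the estimate above.

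For uniqueness, let $w$ be the difference of two solutions. Then $\hat w(\cdot,\xi)\in H^1_{\rm loc}$ in time for almost every $\xi$, and it solves $\varphi'=-|\xi|^2\varphi$ with $\varphi(0)=0$. Exactly as in the uniqueness proof for \eqref{chaleur}, this forces $\hat w=0$, so $w=0$.
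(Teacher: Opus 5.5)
Your proof is correct, and its skeleton matches the paper's: you take the Fourier transform in $x$ and write $|\xi|^2\hat u(\cdot,\xi)$ as a time convolution of the forcing (extended by $0$) with $k_\xi(t)=|\xi|^2e^{-t|\xi|^2}{\mathds{1}}_{(0,\infty)}(t)$, which is exactly the paper's $g_\xi$. The key estimate is where you differ. The paper takes the Fourier transform in time and computes ${\mathscr{F}}_t(g_\xi)(\tau)=(2\pi)^{-1/2}\frac{|\xi|^2}{i\tau+|\xi|^2}$. It bounds this multiplier uniformly in $(\tau,\xi)$ and concludes by Plancherel. You instead apply Young's inequality in $t$ for each fixed $\xi$, using $\|k_\xi\|_{L^1(0,\infty)}=1$, and then integrate in $\xi$. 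These are two faces of the same fact: up to normalisation, the multiplier is bounded by $\|k_\xi\|_{L^1}$. Both routes give the constant $1$, and both are intrinsically tied to $p=2$. Your step relies on exchanging the $L^2$ norms in $t$ and $\xi$ by Fubini. It cannot be run directly on $L^2({\mathds{R}}^n)$-valued functions, because $\|(-\Delta)e^{t\Delta}\|_{L^2\to L^2}=\sup_\xi k_\xi(t)=\frac{1}{et}$ is not integrable in $t$. This is precisely why Theorem~\ref{thm:MRLp} needs a Calder\'on--Zygmund argument. Your route is more elementary, since no time Fourier transform is needed. You also handle $\partial_t u$, the initial condition and uniqueness explicitly, whereas the paper only invokes them through ``what we have already proved''. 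In exchange, the paper's route produces the space--time symbol $\frac{1}{|\xi|^2+i\tau}$. That symbol is reused immediately in Proposition~\ref{prop:mixedMaxReg} to control fractional time derivatives, which Young's inequality in $t$ alone does not provide.
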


\begin{proof}
Thanks to what we have already proved, it suffices to prove that for all forcing term $f\in L^2(0,\infty; L^2({\mathds{R}}^n))$, 
the function
\[
\Phi:(t,\xi)\mapsto \int_0^t |\xi|^2 e^{-(t-s)|\xi|^2}{\mathscr{F}}_x(f(s,\cdot))(\xi)\,{\rm d}s
\]
belongs to $L^2(0,\infty;L^2({\mathds{R}}^2))$. For $\xi\in{\mathds{R}}^n$, let $g_{\xi}(t):= |\xi|^2e^{-t|\xi|^2} \mathds{1}_{(0,\infty)}(t)$,
$t\in{\mathds{R}}$ and $h_{\xi}(t):=\mathds{1}_{(0,\infty)}(t){\mathscr{F}}(f(t,\cdot))(\xi)$, $t\in{\mathds{R}}$.
Then we have $\Phi(t,\xi)=(2\pi)^{-\frac{1}{2}}g_{\xi}*h_{\xi}(t)$ for all $t\ge 0$ and $\Phi(t,\xi)=0$ if $t<0$, where the convolution is to 
be understood in the time variable. Taking the Fourier transform in $t$, denoted by ${\mathscr{F}}_t$, we obtain
\[
{\mathscr{F}}_t(\Phi(\cdot,\xi))(\tau) = {\mathscr{F}}_t(g_{\xi})(\tau){\mathscr{F}}_t(h_{\xi})(\tau).
\]
The computation of ${\mathscr{F}}_t(g_{\xi})$ gives directly ${\mathscr{F}}_t(g_{\xi})(\tau)=(2\pi)^{-\frac{1}{2}}\frac{|\xi|^2}{i\tau+|\xi|^2}$, which 
belongs to $L^\infty({\mathds{R}}\times{\mathds{R}}^n)$.
We also have that ${\mathscr{F}}_t(h_{\xi})(\tau)={\mathscr{F}}_{t,x}\bigl((t,x)\mapsto f(t,x)\,{\mathds{1}}_{[0,\infty)}(t)\bigr)(\tau,\xi)$, so that 
$(\tau,\xi)\mapsto {\mathscr{F}}_t(h_{\xi})(\tau)$ belongs to $L^2({\mathds{R}}\times{\mathds{R}}^n)$. This proves that 
\[
(t,\xi)\mapsto \Phi (t,\xi)={\mathscr{F}}_{\tau}^{-1}\bigl(\tau\mapsto{\mathscr{F}}_t(g_{\xi})(\tau){\mathscr{F}}_t(h_{\xi})(\tau)\bigr)(t)
\]
belongs to $L^2({\mathds{R}}\times{\mathds{R}}^2))=L^2({\mathds{R}};L^2({\mathds{R}}^2))$. Since $\Phi(t,\xi)=0$ if $t<0$, we obtain 
the claim.
\end{proof}

\begin{exercise}
Let $s\in {\mathds{R}}$. Prove that
for all $f\in L^2(0,\infty; \dot H^s({\mathds{R}}^n))$, there exists a unique $u\in L^2(0,\infty; \dot H^{s+2}({\mathds{R}}^n))$ with
$\partial_t u\in L^2(0,\infty; \dot H^s({\mathds{R}}^n))$ satisfying $\partial_t u-\Delta u=f$ for almost all $(t,x)\in (0,\infty)\times{\mathds{R}}^n$
and $u(0)=0$. Moreover, $u$ is given by \eqref{eq:solchaleur}.
\end{exercise}

\begin{proposition}[Mixed derivatives]
\label{prop:mixedMaxReg}
Let $s\in{\mathds{R}}$.
For all $f\in L^2(0,\infty; H^s({\mathds{R}}^n))$, there exists a unique $u\in \dot H^{\alpha}(0,\infty ;\dot H^{s+2(1-\alpha)}({\mathds{R}}^n))$ 
for all $\alpha\in [0,1]$ satisfying $\partial_t u-\Delta u=f$ 
for almost all $(t,x)\in (0,\infty)\times{\mathds{R}}^n$
and $u(0)=0$. As before, $u$ is given by \eqref{eq:solchaleur}.
\end{proposition}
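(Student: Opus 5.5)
We follow the proof of Proposition~\ref{prop:MRL2}: extend $f$ by zero for $t<0$ and work entirely on the Fourier side in both variables $(t,x)$, with dual variables $(\tau,\xi)$. Following the exercise after Proposition~\ref{prop:MRL2}, we read the hypothesis as $f\in L^2(0,\infty;\dot H^s({\mathds{R}}^n))$. This holds in particular when $f\in L^2(0,\infty;H^s({\mathds{R}}^n))$ with $s\ge0$, and in any case the estimate below only uses the homogeneous weight $|\xi|^{s}$. Let $u$ be given by \eqref{eq:solchaleur}, extended by $0$ for $t<0$. Then $u$ is the causal convolution in time of $h_\xi(t)={\mathds{1}}_{(0,\infty)}(t){\mathscr{F}}_x(f(t,\cdot))(\xi)$ with the kernel $t\mapsto e^{-t|\xi|^2}{\mathds{1}}_{(0,\infty)}(t)$. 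The computation already done in Proposition~\ref{prop:MRL2} then gives, up to the normalising constants,
\[
\hat u(\tau,\xi)=\frac{1}{i\tau+|\xi|^2}\,\hat f(\tau,\xi),\qquad (\tau,\xi)\in{\mathds{R}}\times{\mathds{R}}^n ,
\]
where $(\tau,\xi)\mapsto |\xi|^s\hat f(\tau,\xi)$ belongs to $L^2({\mathds{R}}\times{\mathds{R}}^n)$ by Plancherel in $(t,x)$.

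By definition of the homogeneous spaces (in $t$ and in $x$), belonging to $\dot H^{\alpha}(0,\infty;\dot H^{s+2(1-\alpha)}({\mathds{R}}^n))$ amounts to the condition $|\tau|^\alpha|\xi|^{s+2(1-\alpha)}\hat u\in L^2({\mathds{R}}\times{\mathds{R}}^n)$. This is also how we interpret the fractional time derivative on the half line, via the zero extension. We write
\[
|\tau|^\alpha|\xi|^{s+2(1-\alpha)}|\hat u(\tau,\xi)|=\frac{|\tau|^\alpha(|\xi|^2)^{1-\alpha}}{|i\tau+|\xi|^2|}\;|\xi|^s|\hat f(\tau,\xi)|.
\]
The key step is to bound the multiplier $m_\alpha(\tau,\xi)=|\tau|^\alpha|\xi|^{2(1-\alpha)}/|i\tau+|\xi|^2|$ uniformly. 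Since $|i\tau+|\xi|^2|=(\tau^2+|\xi|^4)^{1/2}\ge\max(|\tau|,|\xi|^2)$, we get $|\tau|^\alpha|\xi|^{2(1-\alpha)}\le \max(|\tau|,|\xi|^2)^{\alpha}\max(|\tau|,|\xi|^2)^{1-\alpha}$. Hence $m_\alpha\le1$ for every $\alpha\in[0,1]$. This can also be seen as Young's inequality $a^\alpha b^{1-\alpha}\le \alpha a+(1-\alpha)b$, i.e.\ interpolation between the endpoint cases $\alpha=0$ and $\alpha=1$. Plancherel then yields
\[
\|u\|_{\dot H^\alpha(\dot H^{s+2(1-\alpha)})}\le C\|f\|_{L^2(\dot H^s)}
\]
for all $\alpha$ simultaneously. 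The endpoints recover the content of the exercise following Proposition~\ref{prop:MRL2}: $\alpha=0$ gives $L^2(\dot H^{s+2})$, and $\alpha=1$ gives $\partial_tu\in L^2(\dot H^s)$.

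The equation $\partial_tu-\Delta u=f$ and the initial condition $u(0)=0$ are inherited from \eqref{eq:solchaleur}, exactly as in Theorem~\ref{thm:classicalsol} and Proposition~\ref{prop:MRL2}. For the initial condition, the case $\alpha=1$ together with a density argument shows that $u$ is continuous in time with values in a suitable space, and $u(0)=0$ because the time integral in \eqref{eq:solchaleur} is empty at $t=0$. Uniqueness follows from the uniqueness argument used for \eqref{chaleur}. The difference $w$ of two solutions satisfies, for each fixed $\xi$, the ODE $\varphi_\xi'=-|\xi|^2\varphi_\xi$ with $\varphi_\xi(0)=0$, so $\hat w=0$. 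Membership in the case $\alpha=1$ (or in $\alpha=0$) already gives enough regularity to justify this ODE argument for a.e.\ $\xi$.

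The main obstacle I expect is the rigorous identification of the time Fourier transform of the zero-extended solution, i.e.\ justifying the formula for $\hat u$ above and making sense of $\dot H^\alpha(0,\infty;\cdot)$ through the extension by $0$. I would handle it as in Proposition~\ref{prop:MRL2}. First prove everything for $f$ in a dense class, for instance $f$ smooth and compactly supported in $(0,\infty)\times{\mathds{R}}^n$, where Fubini and the explicit integral ${\mathscr{F}}_t(e^{-t|\xi|^2}{\mathds{1}}_{(0,\infty)})(\tau)=(2\pi)^{-1/2}(i\tau+|\xi|^2)^{-1}$ are legitimate. Then extend by the uniform estimate together with completeness of the weighted $L^2$ spaces on the Fourier side.
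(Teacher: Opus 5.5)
Your proof is correct and follows the paper's own argument: Fourier transform in $(t,x)$ of the zero-extended solution, the identity $\hat u=(2\pi)^{-1/2}(i\tau+|\xi|^2)^{-1}\hat f$, and a uniform bound on the multiplier $|\tau|^\alpha|\xi|^{2(1-\alpha)}/|i\tau+|\xi|^2|$. You read the hypothesis as $f\in L^2(0,\infty;\dot H^s)$, exactly as the paper's proof does, and this reading is genuinely needed when $s<0$, since then $H^s\not\subset\dot H^s$ at low frequencies. The only difference is the multiplier bound: the paper computes the exact supremum after the substitution $\sigma=\tau/|\xi|^2$, whereas your inequality $\max(|\tau|,|\xi|^2)\le|i\tau+|\xi|^2|$ gives the bound $1$ for every $\alpha\in[0,1]$ at once, endpoints included.
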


\begin{proof}
It was proved in Proposition~\ref{prop:MRL2} that $u$ given by \eqref{eq:solchaleur} is solution of $\partial_tu-\Delta u=f$, $u(0)=0$.
The cases $\alpha=0$ and $\alpha =1$ were proved in Proposition~\ref{prop:MRL2}. Now, let $\alpha\in(0,1)$. Taking the Fourier
transform in $t$ and $x$ of the expression \eqref{eq:solchaleur}, we obtain
\[
{\mathscr{F}}_{t,x}(u)(\tau,\xi)=\tfrac{1}{\sqrt{2\pi}(|\xi|^2+i\tau)}\,{\mathscr{F}}_{t,x}(f)(\tau,\xi), \quad \tau\in{\mathds{R}}, \xi \in {\mathds{R}}^n.
\]
Therefore, for all $\alpha\in(0,1)$, we have that
\[
\bigl\|u\bigr\|_{\dot H^\alpha(0,\infty;\dot H^{s+2(1-\alpha)}({\mathds{R}}^n))}
=\Bigl\|(\tau,\xi)\mapsto \tfrac{|\tau|^\alpha|\xi|^{2(1-\alpha)}}{\sqrt{2\pi}(|\xi|^2+i\tau)}\,
|\xi|^{\frac{s}{2}}{\mathscr{F}}_{t,x}(f)(\tau,\xi)\Bigr\|_{L^2({\mathds{R}}\times{\mathds{R}}^n)}.
\]
By the assumption $f\in L^2(0,\infty; \dot H^s({\mathds{R}}^n))$, the function $(\tau,\xi)\mapsto |\xi|^{\frac{s}{2}}{\mathscr{F}}_{t,x}(f)(\tau,\xi)$
belongs to $L^2({\mathds{R}}\times{\mathds{R}}^n)$. Then, it suffices to prove that
$(\tau,\xi)\mapsto \frac{|\tau|^\alpha|\xi|^{2(1-\alpha)}}{\sqrt{2\pi}(|\xi|^2+i\tau)}$ is a bounded function on ${\mathds{R}}\times{\mathds{R}}^n$.
This is immediate when writing the expression with the change of variable $\sigma=\frac{\tau}{|\xi|^2}$:
\[
\sup_{(\tau,\xi)\in{\mathds{R}}\times{\mathds{R}}^n}\Bigl|\frac{|\tau|^\alpha|\xi|^{2(1-\alpha)}}{\sqrt{2\pi}(|\xi|^2+i\tau)}\Bigr|
=\sup_{\sigma\in{\mathds{R}}}\Bigl|\frac{|\sigma|^\alpha}{\sqrt{2\pi}(1+i\sigma)}\Bigr| 
=\tfrac{1}{\sqrt{2\pi}}\,\alpha^{\frac{\alpha}{2}}(1-\alpha)^{1-\frac{\alpha}{2}}.
\]
This last value is obtained by studying the function $y\mapsto \frac{|y|^\alpha}{\sqrt{1+y^2}}$ on ${\mathds{R}}$ (its maximum is attained
at $y=\sqrt{\frac{\alpha}{1-\alpha}}$).
\end{proof}

\begin{theorem}[Maximal regularity -- $L^p$]
\label{thm:MRLp}
For all $f\in L^p(0,\infty; L^2({\mathds{R}}^n))$, there exists a unique $u\in L^p(0,\infty; \dot H^2({\mathds{R}}^n))$ with
$\partial_t u\in L^p(0,\infty; L^2({\mathds{R}}^n))$ satisfying $\partial_t u-\Delta u=f$ for almost all $(t,x)\in (0,\infty)\times{\mathds{R}}^n$
and $u(0)=0$. Moreover, $u$ is given by \eqref{eq:solchaleur}.
\end{theorem}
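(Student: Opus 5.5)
The plan is to reduce everything to the boundedness on $L^p(0,\infty;L^2({\mathds{R}}^n))$ of the single operator
\[
T f(t)=\int_0^t \Delta e^{(t-s)\Delta}f(s)\,{\rm d}s ,
\]
since $\partial_t u=f+\Delta u$ and $\|\Delta u\|_2=\|u\|_{\dot H^2}$. Uniqueness is handled first and exactly as in Proposition~\ref{prop:MRL2}. If $w$ solves the homogeneous problem, then on the Fourier side in $x$ we get $\partial_t\hat w(t,\xi)=-|\xi|^2\hat w(t,\xi)$ with $\hat w(0,\xi)=0$ for a.e. $\xi$. Hence $\hat w\equiv0$ and $w=0$. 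Existence is also clear from the formula \eqref{eq:solchaleur}, at least for $f$ in a dense class such as $L^p\cap L^2$ in time (where Proposition~\ref{prop:MRL2} already gives the equation). The real content is the a priori estimate
\[
\|Tf\|_{L^p(0,\infty;L^2)}\le C_p\|f\|_{L^p(0,\infty;L^2)} ,
\]
after which one extends by density.

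The key observation is that the operator $e^{t\Delta}$ is a Fourier multiplier in $x$. By Plancherel in $x$, we have
\[
\|Tf(t)\|_{L^2_x}^2=\int_{{\mathds{R}}^n}\Bigl|\int_0^t|\xi|^2e^{-(t-s)|\xi|^2}\hat f(s,\xi)\,{\rm d}s\Bigr|^2{\rm dm}_n(\xi).
\]
I would treat $T$ as a singular integral in time with operator-valued kernel $K(t)=\Delta e^{t\Delta}{\mathds 1}_{t>0}$ acting on the Hilbert space $H=L^2({\mathds{R}}^n)$. Following the same scheme as the sketch for Riesz transforms, I would argue in three steps.

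First, $T$ is bounded on $L^2({\mathds{R}};H)$; this is Proposition~\ref{prop:MRL2}, through the bounded symbol $|\xi|^2/(i\tau+|\xi|^2)$.

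Second, the kernel satisfies the H\"ormander condition
\[
\int_{|t|>2|r|}\|K(t-r)-K(t)\|_{{\mathcal L}(H)}\,{\rm d}t\le C .
\]
To check this, note that $\|K(t)\|=\sup_{\lambda\ge0}\lambda e^{-t\lambda}\le (et)^{-1}$ and $\|K'(t)\|=\sup_{\lambda\ge0}\lambda^2e^{-t\lambda}\le 4(et)^{-2}$. These are computed on the Fourier side as suprema over $\lambda=|\xi|^2$. Writing $K(t-r)-K(t)=-\int_0^r K'(t-\sigma)\,{\rm d}\sigma$ then gives the condition.

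Third, the Calder\'on--Zygmund decomposition in the variable $t$ (vector-valued, which works verbatim since only norms in $H$ enter) yields the weak type $(1,1)$ bound. Marcinkiewicz interpolation then gives $1<p\le2$. For $2\le p<\infty$ I would use duality: the adjoint of $T$ on $L^{p'}(H)$ is the anti-causal operator $\int_t^\infty \Delta e^{(s-t)\Delta}g(s)\,{\rm d}s$, which has the same kernel bounds.

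The main obstacle is the Calder\'on--Zygmund step, i.e.\ justifying the vector-valued weak $(1,1)$ estimate. In the spirit of the Riesz transform theorem, I would present it at the level of a sketch. If a more elementary route is wanted for a lecture, I would first try to exploit the Hilbert structure directly. For each fixed $\xi$, the scalar operator $h\mapsto\int_0^t\lambda e^{-(t-s)\lambda}h(s)\,{\rm d}s$ (with $\lambda=|\xi|^2$) is convolution with an $L^1$ kernel of norm $1$, so it is bounded on every $L^p(0,\infty)$. However, this does not directly give $L^p_tL^2_x$ when $p\neq2$, because Minkowski's inequality only goes one way; that is why the singular integral machinery seems unavoidable for $p<2$.

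Finally, the bound for $\partial_tu$ follows from $\partial_t u=f+Tf$. The identification of $u$ with \eqref{eq:solchaleur} for general $f$ comes from density and the continuity of $f\mapsto u$ (note that $u(t)$ is defined pointwise in $t$ by \eqref{eq:solchaleur}, since $e^{(t-s)\Delta}$ is a contraction and $f\in L^1(0,t;L^2)$).
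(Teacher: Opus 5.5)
Your proposal is correct and follows essentially the same route as the paper. Both arguments use $L^2$ boundedness from Proposition~\ref{prop:MRL2}, then a Calder\'on--Zygmund decomposition in time with values in $L^2({\mathds{R}}^n)$ giving weak type $(1,1)$ through exactly your kernel bound $\sup_{\lambda\ge0}\lambda^2e^{-t\lambda}\le 4(et)^{-2}$, then Marcinkiewicz interpolation for $1<p<2$, and finally duality through the anti-causal adjoint ${\mathscr{M}}'$ for $p>2$. The only differences are that the paper writes out the Calder\'on--Zygmund step explicitly where you leave it as a sketch, and that you additionally spell out the uniqueness and the density argument identifying $u$ with \eqref{eq:solchaleur}, which the paper takes from earlier results.
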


\begin{proof}
We need to prove that the operator
\begin{equation}
\label{eq:opMR}
{\mathscr{M}} : 
f \longmapsto \Bigl(t \mapsto \int_0^t (-\Delta)e^{(t-s)\Delta}f(s)\,{\rm d}s\Bigr)
\end{equation}
is bounded from $L^p(0,\infty; L^2({\mathds{R}}^n))$ to $L^p(0,\infty; L^2({\mathds{R}}^n))$. 
Recall that for $t\ge 0$,
\[ 
\int_0^t (-\Delta)e^{(t-s)\Delta}f(s)\,{\rm d}s=
{\mathscr{F}}_{\xi}^{-1}\bigl(\xi\mapsto \int_0^t |\xi|^2 e^{-(t-s)|\xi|^2}{\mathscr{F}}_x(f(s,\cdot))(\xi)\,{\rm d}s\bigr).
\]
The previous proposition proves the case $p=2$.
To simplify the notations, we denote by $X$ the space $L^2({\mathds{R}}^n)$.

\noindent
We start with showing that ${\mathscr{M}}$ maps $L^1(0,\infty;X)$ to $L^1_w(0,\infty;X)$ where
the weak-$L^1$ space $L^1_w$ consists in measurable functions $v$ such that 
\[
\|v\|_{L^1_w(0,\infty;X)}:=\sup_{\lambda>0}\lambda \bigl|\{t>0;\|v(t)\|_X>\lambda\}\bigr| <\infty.
\]
Let $v={\mathscr{M}}(f)$ and assume that $f\in L^2(0,\infty;X)\cap L^1(0,\infty;X)$. We decompose $f$ via the Calder\'on-Zygmund
method into a ``good" part $g$ and a ``bad" part $b=\sum b_k$: $f=g+\sum b_k$ at a level $\lambda>0$, where
\begin{itemize}
\item
$b_k = \bigl(f-\fint_{I_k}f(s,\cdot)\,{\rm d}s\bigr){\mathds{1}}_{I_k}$, $k\in {\mathds{N}}$, where $I_k$ are disjoint intervals;
\item
$\sum_k |I_k|\le \lambda^{-1}\|f\|_{L^1(0,\infty;X)}$;
\item
$\|g\|_{L^\infty(0,\infty;X)}\le 2\lambda$.
\end{itemize}
This construction implies that $\|g\|_{L^1(0,\infty;X)}\le \|f\|_{L^1(0,\infty;X)}$ and $\|b\|_{L^1(0,\infty;X)}\le 2 \|f\|_{L^1(0,\infty;X)}$.

\noindent
We have that ${\mathscr{M}}(f)={\mathscr{M}}(g)+ {\mathscr{M}}(b)$ and then
\begin{align}
&\bigl\{t>0; \|{\mathscr{M}}(f)(t)\|_X> \lambda\bigr\} \nonumber\\
\subset &\bigl\{t>0; \|{\mathscr{M}}(g)(t)\|_X> \tfrac{\lambda}{2}\bigr\}
\cup \bigl\{t>0; \|{\mathscr{M}}(b)(t)\|_X> \tfrac{\lambda}{2}\bigr\}. \label{eq:0}
\end{align}
Since $g\in L^\infty(0,\infty;X)\cap L^1(0,\infty;X)$, by H\"older inequality we infer that $g\in L^2(0,\infty;X)$ and
$\|g\|_{L^2(0,\infty;X)}\le \|g\|_{L^1(0,\infty;X)}^{1/2}\|g\|_{L^\infty(0,\infty;X)}^{1/2}\le \sqrt{2\lambda \|f\|_{L^1(0,\infty;X)}}$. Since
${\mathscr{M}}$ is bounded on $L^2(0,\infty;X)$ by the previous proposition, we have that
\begin{equation}
\label{eq:1}
\Bigl|\bigl\{t>0; \|{\mathscr{M}}(g)(t)\|_X> \tfrac{\lambda}{2}\bigr\}\Bigr|\le \frac{\|{\mathscr{M}}(g)\|^2_{L^2(0,\infty;X)}}{(\lambda/2)^2}
\le \tfrac{8}{\lambda}\, \|{\mathscr{M}}\|_{L^2(X)\to L^2(X)}^2 \|f\|_{L^1(0,\infty;X)}.
\end{equation}
It remains to estimate $\Bigl|\bigl\{t>0; \|{\mathscr{M}}(b)(t)\|_X> \tfrac{\lambda}{2}\bigr\}\Bigr|$. We decompose the set as follows:
\[
\bigl\{t>0; \|{\mathscr{M}}(b)(t)\|_X> \tfrac{\lambda}{2}\bigr\}\subset 
E \cup \bigl\{t\in (0,\infty)\setminus E; \|{\mathscr{M}}(b)(t)\|_X> \tfrac{\lambda}{2}\bigr\}
\]
where $E=\cup_{k\in{\mathds{N}}} \tilde{I}_k$ where $\tilde{I}_k$ is the double of $I_k$. We have that
\begin{equation}
\label{eq:2}
|E| \le \sum_{k\in{\mathds{N}}}|\tilde{I}_k| \le 2 \sum_{k\in{\mathds{N}}}|I_k|\le 2\,\tfrac{1}{\lambda}\,\|f\|_{L^1(0,\infty;X)}.
\end{equation}
To estimate the size of $\bigl\{t\in (0,\infty)\setminus E; \|{\mathscr{M}}(b)(t)\|_X> \tfrac{\lambda}{2}\bigr\}$, let first denote by $s_k$ 
the center of the interval $I_k$ and thanks to the property that $\int_0^\infty b_k(s,\cdot)\,{\rm d}s=0$, we have
\begin{align*}
&\int_{(0,\infty)\setminus E}\|{\mathscr{M}}(b)(t)\|_X\, {\rm d}t\\
\le & \sum_{k\in{\mathds{N}}}\int_{(0,\infty)\setminus E} 
\Bigl\|\xi\mapsto \int_{I_k} g_{\xi}(t-s){\mathscr{F}}_x\bigl(b_k(s,\cdot)\bigr)(\xi)\,{\rm d}s\Bigr\|_X\,{\rm d}t\\
\le& \sum_{k\in{\mathds{N}}}\int_{(0,\infty)\setminus \tilde{I}_k} 
\Bigl\|\xi\mapsto \int_{I_k} \bigl(g_{\xi}(t-s)-g_\xi(t-s_k)\bigr){\mathscr{F}}_x\bigl(b_k(s,\cdot)\bigr)(\xi)\,{\rm d}s\Bigr\|_X\,{\rm d}t,
\end{align*}
where $g_{\xi}(s)=|\xi|^2e^{-s|\xi|^2}{\mathds{1}}_{(0,\infty)}(s)$ as in the previous proof.
For all $\xi\in{\mathds{R}}^n$, for all $k\in {\mathds{N}}$ and all $s\in I_k$, we have that
\begin{align*}
\int_{(0,\infty)\setminus \tilde{I}_k}|g_{\xi}(t-s)-g_{\xi}(t-s_k)|\,{\rm d}t
\le& 
\int_{t-s_k>2|s-s_k|}|\xi|^2\bigl|e^{-(t-s)|\xi|^2}-e^{-(t-s_k)|\xi|^2}\bigr| \,{\rm d}t \\
\le&
\int_{t-s_k>2|s-s_k|} \Bigl|\int_{s_k}^s|\xi|^4 e^{-(t-\sigma)|\xi|^2}\,{\rm d}\sigma\Bigr|\,{\rm d}t\\
\le&
\tfrac{4}{e^2}\int_{t-s_k>2|s-s_k|} \bigl|\tfrac{1}{t-s}-\tfrac{1}{t-s_k}\bigr| \,{\rm d}t\\
\le&
\tfrac{4}{e^2}\int_{t'>2|s'|} \bigl|\tfrac{1}{t'-s'}-\tfrac{1}{t'}\bigr| \,{\rm d}t' \ \le \ \tfrac{4\ln 2}{e^2}
\end{align*}
where we made the change of variable $t'=t-s_k$ and $s'=s-s_k$ and the last inequality comes from the exact computation of
$\int_{t'>2|s'|} \bigl|\tfrac{1}{t'-s'}-\tfrac{1}{t'}\bigr| \,{\rm d}t'$ which is equal to $0$ if $s'=0$, $\ln 2$ if $s'>0$ and $\ln\frac{3}{2}$ if $s'<0$.
This implies that
\[
\int_{(0,\infty)\setminus E}\|{\mathscr{M}}(b)(t)\|_X\, {\rm d}t
\le \tfrac{4\ln 2}{e^2}\,\sum_{k\in{\mathds{N}}}\int_{I_k}\|{\mathscr{F}}_x\bigl(b_k(s,\cdot)\|_X\,{\rm d}s
\lesssim \|b\|_{L^1(0,\infty;X)}\lesssim \|f\|_{L^1(0,\infty;X)}.
\]
We are now in position to estimate the size of $\bigl\{t\in (0,\infty)\setminus E; \|{\mathscr{M}}(b)(t)\|_X> \tfrac{\lambda}{2}\bigr\}$:
\begin{equation}
\label{eq:3}
\Bigl|\bigl\{t\in (0,\infty)\setminus E; \|{\mathscr{M}}(b)(t)\|_X> \tfrac{\lambda}{2}\bigr\}\Bigr|\le
\frac{\|{\mathscr{M}}(b)\|_{L^1((0,\infty\setminus E:X)}}{\frac{\lambda}{2}}\lesssim\tfrac{1}{\lambda}\,\|f\|_{L^1(0,\infty;X)}.
\end{equation}
Puting \eqref{eq:1}, \eqref{eq:2} and \eqref{eq:3} in \eqref{eq:0}, and taking the $\sup$ over all $\lambda>0$, we obtain
\[
\|{\mathscr{M}}(f)\|_{L^1_w(0,\infty;X)}
=\sup_{\lambda>0}\lambda \Bigl|\bigl\{t\in (0,\infty); \|{\mathscr{M}}(f)(t)\|_X> \lambda\bigr\}\Bigr|
\lesssim \|f\|_{L^1(0,\infty;X)}.
\]
We started with $f\in L^1(0,\infty;X)\cap L^2(0,\infty;X)$; by density, this is also true for all $f\in L^1(0,\infty;X)$ (by density, we can
extend ${\mathscr{M}}$ on $L^1(0,\infty;X)$).

\noindent
By Proposition~\ref{prop:MRL2}, we know that ${\mathscr{M}}$ is a bounded linear operator from $L^2(0,\infty;X)$ to 
$L^2(0,\infty;X)$. We just proved that ${\mathscr{M}}$ can be extended to a bounded linear operator from $L^1(0,\infty;X)$
to $L^1_w(0,\infty;X)$: we say that ${\mathscr{M}}$ is of {\tt weak type} $(1,1)$. By Marcinkiewicz interpolation theorem
(Theorem~\ref{thm:marcinkiewicz} below), 
we obtain that ${\mathscr{M}}$ is a bounded linear operator
from $L^p(0,\infty;X)$ to $L^p(0,\infty;X)$ for all $1<p<2$. It remains to prove that we have the same result for $p>2$. The dual 
operator of ${\mathscr{M}}$ is given by
\begin{equation}
\label{eq:M'}
{\mathscr{M}}'(g)(s,\cdot)=
{\mathscr{F}}_{\xi}^{-1}\Bigl(\xi\mapsto \int_s^\infty |\xi|^2e^{-(t-s)|\xi|^2}{\mathscr{F}}_x(g(t,\cdot))(\xi)\,{\rm d}t \Bigr), \quad
s>0.
\end{equation}
By duality, the operator ${\mathscr{M}}'$ is bounded in $L^2(0,\infty;X)$ (we identify the dual space of the Hilbert space $X$ with
itself). Using the same procedure as for ${\mathscr{M}}$ (Calder\'on-Zygmund decomposition), the same arguments allow us to 
prove that ${\mathscr{M}}'$ is of weak type $(1,1)$ (bounded from $L^1(0,\infty;X)$ to $L^1_w(0,\infty;X)$). And again by interpolation,
we know that ${\mathscr{M}}'$ is a bounded linear operator from $L^p(0,\infty;X)$ to $L^p(0,\infty;X)$ for all $1<p<2$. By duality, we
obtain that ${\mathscr{M}}$ is a bounded linear operator from $L^p(0,\infty;X)$ to $L^p(0,\infty;X)$ for all $p>2$. And that concludes the\
proof.
\end{proof}

\begin{theorem}[Marcinkiewicz interpolation theorem]
\label{thm:marcinkiewicz}
Let $T$ be a linear operator that is of weak type $(1,1)$ and strong type $(2,2)$: that means that $T$ is bounded from $L^1({\mathds{R}};X)$
to $L^1_w({\mathds{R}};X)$ and bounded from $L^2({\mathds{R}};X)$ to itself. Then $T:L^p({\mathds{R}};X)\to L^p({\mathds{R}};X)$
is bounded for all $1<p\le 2$.
\end{theorem}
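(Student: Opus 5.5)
The plan is to run the classical Marcinkiewicz argument: split $f$ at each height into a large part and a bounded part, then integrate the distribution function of $Tf$ with the layer cake formula. This is the same formula already used in the proof of the Sobolev embedding \eqref{eq:sobolev-emb}. For $p=2$ there is nothing to prove, so fix $1<p<2$ and $f\in L^p({\mathds{R}};X)$. Denote by $A_1$ the weak $(1,1)$ constant and by $A_2$ the $L^2$ operator norm of $T$. For each level $\lambda>0$ I define
\[
f_1^\lambda := f\,{\mathds{1}}_{\{\|f\|_X>\lambda\}},\qquad f_2^\lambda := f\,{\mathds{1}}_{\{\|f\|_X\le\lambda\}}.
\]
Since $p>1$, the function $f_1^\lambda$ lies in $L^1({\mathds{R}};X)$: on its support $\|f\|_X\le \|f\|_X^p\lambda^{1-p}$. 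Since $p<2$, $f_2^\lambda$ lies in $L^2({\mathds{R}};X)$: $\|f_2^\lambda\|_X^2\le \lambda^{2-p}\|f\|_X^p$. Thus $Tf:=Tf_1^\lambda+Tf_2^\lambda$ makes sense. This definition is independent of $\lambda$, because $T$ is linear on $L^1+L^2$ and the two extensions of $T$ agree on $L^1\cap L^2$.

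Next I use the inclusion
\[
\{\|Tf\|_X>\lambda\}\subset\{\|Tf_1^\lambda\|_X>\tfrac\lambda2\}\cup\{\|Tf_2^\lambda\|_X>\tfrac\lambda2\}.
\]
The weak type $(1,1)$ bound controls the first set, and the strong $(2,2)$ bound with the Bienaym\'e--Tchebychev inequality controls the second:
\[
\bigl|\{\|Tf\|_X>\lambda\}\bigr|\le \tfrac{2A_1}{\lambda}\int_{\{\|f\|_X>\lambda\}}\|f\|_X\,{\rm d}t+\tfrac{4A_2^2}{\lambda^2}\int_{\{\|f\|_X\le\lambda\}}\|f\|_X^2\,{\rm d}t .
\]

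I then insert this into $\|Tf\|_{L^p(X)}^p=p\int_0^\infty\lambda^{p-1}|\{\|Tf\|_X>\lambda\}|\,{\rm d}\lambda$ and exchange the integrals by Tonelli. The first term gives
\[
2A_1p\int_{\mathds{R}}\|f(t)\|_X\int_0^{\|f(t)\|_X}\lambda^{p-2}\,{\rm d}\lambda\,{\rm d}t=\tfrac{2A_1p}{p-1}\|f\|_{L^p(X)}^p,
\]
which uses $p>1$. The second term gives
\[
4A_2^2p\int_{\mathds{R}}\|f(t)\|_X^2\int_{\|f(t)\|_X}^\infty\lambda^{p-3}\,{\rm d}\lambda\,{\rm d}t=\tfrac{4A_2^2p}{2-p}\|f\|_{L^p(X)}^p,
\]
which uses $p<2$. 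Together these give $\|Tf\|_{L^p(X)}\le C_p\|f\|_{L^p(X)}$ with $C_p^p=\frac{2pA_1}{p-1}+\frac{4pA_2^2}{2-p}$.

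The main point needing care is not the computation but measurability and the well-definedness of $Tf$ on $L^p$. First, $t\mapsto\|f(t)\|_X$ must be measurable; this holds for Bochner-measurable $f$, so the truncations are legitimate. Second, $T$ is only given on $L^1$ and on $L^2$, so I would either define it on $L^1+L^2$ as above, or prove the estimate for simple functions, which lie in $L^1\cap L^2$, and extend by density. This is the same density step used at the end of the proof of Theorem~\ref{thm:MRLp}. I would take the density route to avoid the consistency discussion, checking that simple functions are dense in $L^p({\mathds{R}};X)$ and that the bound passes to the limit.
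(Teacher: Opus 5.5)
Your proof is correct. The splitting at height $\lambda$, the two distribution-function estimates and the Tonelli computations, which give $C_p^p=\frac{2pA_1}{p-1}+\frac{4pA_2^2}{2-p}$, all check out.

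The paper contains no argument of its own. It only refers to \cite{BL76}, Theorem~1.3.1, whose classical setting is scalar-valued functions (and more general operators and exponents). What you have written is the classical truncation proof of that theorem, specialised to the exponents $1$ and $2$.

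Compared with the bare citation, your version gives a self-contained justification of the $X$-valued form actually used in Theorem~\ref{thm:MRLp}. It makes visible that only $t\mapsto\|f(t)\|_X$ and the triangle inequality in $X$ enter. The standard form of the argument truncates at height $\gamma\lambda$ and optimises in $\gamma$. That yields the interpolation-type constant $2\bigl(\frac{p}{p-1}+\frac{p}{2-p}\bigr)^{1/p}A_1^{1-\theta}A_2^{\theta}$ with $\frac1p=1-\theta+\frac\theta2$, which is not needed for mere boundedness.

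One small remark on your last paragraph. Your first route, defining $Tf=Tf_1^\lambda+Tf_2^\lambda$ on $L^1+L^2$ independently of $\lambda$, already covers every $f\in L^p({\mathds{R}};X)$. The density route does not really avoid a consistency check. You would still have to identify the continuous extension from simple functions with the given $T$ on $L^p$, and that identification again goes through the embedding $L^p\hookrightarrow L^1+L^2$.
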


\begin{proof}
This theorem is proved in the book ``Interpolation Spaces -- An Introduction" by J\"oran Bergh and J\"orgen L\"ofstr\"om published in
1976 in Grundlehren der mathematischen Wissenschaften (223), Springer Verlag: Theorem~1.3.1, page~9, \cite{BL76}.
\end{proof}

\begin{exercise}
Prove that the operator ${\mathscr{M}}'$ defined by \eqref{eq:M'} is of weak type $(1,1)$.
\end{exercise}

\subsection{The nonlinear heat equation}

In this section (see also \cite{Mo24}), we are interested in solutions of \eqref{eq:chaleur} when $f=f(u)$ depends on the solution $u$:
\begin{equation}
\label{eq:nonlinearheat}
\left\{
\begin{array}{rcl}
\partial_tu-\Delta u&=&f(u)\\
u(0,\cdot)&=&u_0.
\end{array}
\right.
\end{equation}
We will focus on nonlinearities $f$ that are of power type, $i.e.$, $f(u)=|u|^{\nu-1}u$ or $f(u)=|u|^\nu$ for $\nu>1$. In that case,
the equation \eqref{eq:nonlinearheat} is {\tt homogeneous} in the sense that there exists $\alpha\in{\mathds{R}}$ such that
if $u$ is a solution of \eqref{eq:nonlinearheat}, then $u_{\lambda}:(t,x)\mapsto \lambda^\alpha u(\lambda^2 t,\lambda x)$ is also a solution
of \eqref{eq:nonlinearheat} with $u_0$ replaced by $u_{0,\lambda}:x\mapsto \lambda^\alpha u_0(\lambda x)$ for all $\lambda >0$.
In our case $\nu>1$, the homogeneity $\alpha$ must satisfy $\alpha+2=\nu\cdot \alpha$ so that $\alpha=\frac{2}{\nu-1}$.

\begin{definition}
\label{def:critical}
In the case where \eqref{eq:nonlinearheat} is homogeneous, a {\tt critical space} in the Lebesgue scale
$L^q(0,\infty;L^p({\mathds{R}}^n))$ ($1<p,q<\infty$) is a space for which
$\|u_\lambda\|_{L^q(0,\infty;L^p({\mathds{R}}^d))}=\|u\|_{L^q(0,\infty;L^p({\mathds{R}}^d))}$.
\end{definition}

\begin{lemma}
In the case where the non linear heat equation is homogeneous of degree $\alpha$, the associated critical space
is $L^q(0,\infty;L^p({\mathds{R}}^n))$ where $\frac{2}{q}+\frac{n}{p}=\alpha$.
\end{lemma}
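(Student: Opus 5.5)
The plan is a direct scaling computation: compute $\|u_\lambda\|_{L^q(0,\infty;L^p({\mathds{R}}^n))}$ in terms of $\|u\|_{L^q(0,\infty;L^p({\mathds{R}}^n))}$ by two successive changes of variables, first in space and then in time. Recall that $u_\lambda(t,x)=\lambda^\alpha u(\lambda^2t,\lambda x)$. The constant $(2\pi)^{-n/2}$ in ${\rm dm}_n$ plays no role, since it appears identically on both sides.

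First, for fixed $t>0$, I would compute the spatial norm with the substitution $y=\lambda x$, ${\rm d}y=\lambda^n{\rm d}x$:
\[
\|u_\lambda(t,\cdot)\|_p=\lambda^{\alpha}\Bigl(\int_{{\mathds{R}}^n}|u(\lambda^2t,\lambda x)|^p\,{\rm dm}_n(x)\Bigr)^{\frac1p}=\lambda^{\alpha-\frac np}\,\|u(\lambda^2t,\cdot)\|_p.
\]

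Next, I would take the $L^q$ norm in time with the substitution $s=\lambda^2t$, ${\rm d}t=\lambda^{-2}{\rm d}s$, which maps $(0,\infty)$ onto itself:
\[
\|u_\lambda\|_{L^q(0,\infty;L^p)}=\lambda^{\alpha-\frac np}\Bigl(\int_0^\infty\|u(\lambda^2t,\cdot)\|_p^q\,{\rm d}t\Bigr)^{\frac1q}=\lambda^{\alpha-\frac np-\frac2q}\,\|u\|_{L^q(0,\infty;L^p)}.
\]

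The invariance required in Definition~\ref{def:critical} holds for all $\lambda>0$ (and for any $u$ with finite, nonzero norm) if and only if $\lambda^{\alpha-\frac np-\frac2q}=1$ for all $\lambda>0$. This is equivalent to $\alpha-\frac np-\frac2q=0$, that is, $\frac2q+\frac np=\alpha$; for the power nonlinearity, $\alpha=\frac{2}{\nu-1}$. This gives both directions of the claim.

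The only point needing care is the converse direction. One has to note that if the exponent is nonzero, then any $u$ with $0<\|u\|<\infty$ violates the equality for $\lambda\neq1$, so the relation is forced. I do not expect a genuine obstacle here; the argument is routine bookkeeping of the exponents.
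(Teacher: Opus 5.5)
Your proof is correct. The paper states this lemma without proof and leaves it as an exercise. Your direct scaling computation is exactly the intended argument: substitute $y=\lambda x$ in space and then $s=\lambda^2 t$ in time to get $\|u_\lambda\|_{L^q(0,\infty;L^p)}=\lambda^{\alpha-\frac np-\frac2q}\|u\|_{L^q(0,\infty;L^p)}$. This is the same exponent bookkeeping the paper does for $\dot H^s$ norms on the Fourier side in Proposition~\ref{prop:criticalNS}, and your remark that any single $u$ with $0<\|u\|<\infty$ forces the relation settles the converse.
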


\begin{exercise}
\begin{enumerate}
\item
Prove the lemma above!
\item
Determine the critical spaces $L^q(0,\infty;\dot H^s({\mathds{R}}^n))$ for \eqref{eq:nonlinearheat} with homogeneity $\alpha$, $i.e.$,
the relation between $q$ and $s$ to have $\|u_\lambda\|_{L^q(0,\infty;\dot H^s({\mathds{R}}^n))}=\|u\|_{L^q(0,\infty;\dot H^s({\mathds{R}}^n))}$
where $u_\lambda(t,x)=\lambda^\alpha u(\lambda^2 t,\lambda x)$.
\end{enumerate}
\end{exercise}

The method to prove the existence of global mild solutions for small initial data or local mild solutions if the initial condition has no
size restriction relies on the following fixed point theorem.

\begin{theorem}[Fixed Point Theorem]
\label{thm:picard}
Let $Y$ be a Banach space, $a\in Y$ and $F:Y\to Y$ satisfying $F(0)=0$ and there exists $\epsilon>0$ and $M>0$ such
that 
\begin{equation}
\label{eq:estPicard}
\|F(u)-F(v)\|_Y\le M\|u-v\|_Y\bigl(\|u\|_Y^\epsilon+\|v\|_Y^\epsilon\bigr), \quad u,v\in Y.
\end{equation}
Then for each $\delta <\frac{1}{2(2M)^{\frac{1}{\epsilon}}}$, for all $a\in\overline{B}_Y(0,\delta)$, the ball in $Y$ with center $0$ and
radius $\delta$, the map $\Phi:u\mapsto a+F(u)$ admits a fixed point, unique in $\overline{B}_Y(0,2\delta)$.
\end{theorem}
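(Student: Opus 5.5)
The plan is to apply the Banach contraction principle to $\Phi$ on the closed ball $K:=\overline{B}_Y(0,2\delta)$, which is a complete metric space since $Y$ is Banach. Only two things have to be checked: that $\Phi$ maps $K$ into itself, and that $\Phi$ is a strict contraction on $K$. Existence and uniqueness of the fixed point in $K$ then follow at once.

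\emph{Step 1 (growth bound for $F$).} Taking $v=0$ in \eqref{eq:estPicard} and using $F(0)=0$, we get $\|F(u)\|_Y\le M\|u\|_Y^{1+\epsilon}$ for all $u\in Y$.

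\emph{Step 2 ($\Phi(K)\subset K$).} For $u\in K$ and $\|a\|_Y\le\delta$, Step 1 gives
\[
\|\Phi(u)\|_Y\le \delta+M(2\delta)^{1+\epsilon}=\delta\bigl(1+2M(2\delta)^{\epsilon}\bigr).
\]
The hypothesis $\delta<\frac{1}{2(2M)^{1/\epsilon}}$ is equivalent to $2\delta<(2M)^{-1/\epsilon}$, that is, to $2M(2\delta)^\epsilon<1$. Hence $\|\Phi(u)\|_Y<2\delta$, so $\Phi(u)\in K$.

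\emph{Step 3 (contraction).} For $u,v\in K$, the terms $a$ cancel in $\Phi(u)-\Phi(v)$, and \eqref{eq:estPicard} gives
\[
\|\Phi(u)-\Phi(v)\|_Y\le M\|u-v\|_Y\bigl(2(2\delta)^\epsilon\bigr)=2M(2\delta)^\epsilon\,\|u-v\|_Y.
\]
The constant $k:=2M(2\delta)^\epsilon$ is strictly less than $1$ by the same computation as in Step 2. Therefore $\Phi$ is a strict contraction on $K$.

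\emph{Step 4 (conclusion).} Since $K$ is closed in the Banach space $Y$, it is complete. Banach's fixed point theorem then yields a unique fixed point of $\Phi$ in $K$. It can be constructed explicitly as the limit of the Picard iterates $u_0=0$, $u_{j+1}=a+F(u_j)$. These iterates stay in $K$ by Step 2 and form a Cauchy sequence by Step 3, since $\|u_{j+1}-u_j\|_Y\le k^j\|u_1-u_0\|_Y$.

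There is no real obstacle here. The only delicate point is the arithmetic: one has to verify that the single smallness condition on $\delta$ gives both the invariance of the ball (Step 2) and the contraction property (Step 3), and it does, because both reduce to $2M(2\delta)^\epsilon<1$.
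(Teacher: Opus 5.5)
Your proof is correct and matches the paper's argument: you obtain the growth bound $\|F(u)\|_Y\le M\|u\|_Y^{1+\epsilon}$ by setting $v=0$, prove invariance of $\overline{B}_Y(0,2\delta)$, and prove contraction with constant $2M(2\delta)^\epsilon<1$, both coming from the single smallness condition on $\delta$, and then apply Banach's fixed point theorem. The only difference is cosmetic: you write out the Picard iterates explicitly, whereas the paper simply invokes the contraction principle.
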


\begin{proof}
The choice of $\delta$ implies that $(2\delta)^\epsilon<\frac{1}{2M}$.

\noindent
First, we prove that if $a\in \overline{B}_Y(0,\delta)$, then $\overline{B}_Y(0,2\delta)$ is stable under the map $\Phi$.  
The estimate on $F$ and the fact that $F(0)=0$ give for $u\in \overline{B}_Y(0,2\delta)$ the estimate
$\|F(u)\|_Y\le M\|u\|^{1+\epsilon}\le M (2\delta)^{1+\epsilon}\le \delta$. With $\|a\|_Y\le \delta$, we
obtain $\|\Phi(u)\|_Y\le 2\delta$.

\noindent
Next, $\Phi:\overline{B}_Y(0,2\delta)\to \overline{B}_Y(0,2\delta)$ is a contraction. We have that
\[
\|\Phi(u)-\Phi(v)\|_Y\le M(\|u\|_Y^\epsilon+\|v\|_Y^\epsilon)\|u-v\|_Y\le 2M(2\delta)^\epsilon\|u-v\|_Y
\] 
with $2M(2\delta)^\epsilon<1$ as established in the previous step.
We conclude by Picard contraction principle that $\Phi$ has a unique fixed point in $\overline{B}_Y(0,2\delta)$.
\end{proof}

\subsubsection{Existence of mild solutions}

We are now in position to prove the existence of solutions of the nonlinear heat equation for $\nu\in(1,\infty)$
\begin{equation}
\label{eq:NLHE}\tag{NLHE}
\left\{
\begin{array}{rcl}
\partial_tu-\Delta u&=&|u|^{\nu-1}u\\
u(0,\cdot)&=&u_0.
\end{array}
\right.
\end{equation}
\begin{theorem}
\label{thm:existenceNLHE}
Let $n\ge \frac{4}{\nu-1}$ and let $s=\frac{n}{2}-\frac{2}{\nu-1}$. Then there exists $\eta>0$ such that for all $u_0\in\dot H^s({\mathds{R}}^n)$
with $\|u_0\|_{\dot H^s}\le \eta$, there exists $u\in {\mathscr{C}}([0,\infty);\dot H^s({\mathds{R}}^n))$ mild solution of \eqref{eq:NLHE}, $i.e.$,
$u$ satisfies
\begin{align*}
&u(t,\cdot)=e^{t\Delta}u_0 + \int_0^t e^{(t-s)\Delta}\bigl(|u(s,\cdot)|^{\nu-1}u(s,\cdot)\bigr)\,{\rm d}s\\
&={\mathscr{F}}_\xi^{-1}\bigl(\xi\mapsto e^{-t|\xi|^2}{\mathscr{F}}_x(u_0)(\xi)\bigr)
+\int_0^t {\mathscr{F}}_\xi^{-1}\Bigl(\xi \mapsto e^{-(t-s)|\xi|^2}{\mathscr{F}}_x\bigl(|u(s,\cdot)|^{\nu-1}u(s,\cdot)\bigr)(\xi)\Bigr)\,{\rm d}s.
\end{align*}
\end{theorem}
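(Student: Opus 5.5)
We will apply the Fixed Point Theorem~\ref{thm:picard} in a space adapted to maximal regularity. The natural choice, in view of Proposition~\ref{prop:mixedMaxReg} and the exercise on critical spaces, is
\[
Y:=L^{2\nu}\bigl(0,\infty;\dot H^{\sigma}({\mathds{R}}^n)\bigr),\qquad \sigma:=s+\tfrac{1}{\nu},
\]
which is scale invariant: with $\alpha=\frac{2}{\nu-1}$, the condition is $\frac{2}{q}=\alpha+\sigma-\frac n2$, and $q=2\nu$ gives $\sigma-s=\frac1\nu$. The condition $n\ge\frac{4}{\nu-1}$ means $s\ge0$. We also need $0<\sigma<\frac n2$ so that $\dot H^\sigma$ is complete and embeds in $L^{p}$ with $\frac1p=\frac12-\frac\sigma n$.

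We set $a:=e^{t\Delta}u_0$ and $F(u)(t):=\int_0^te^{(t-s)\Delta}(|u|^{\nu-1}u)(s)\,{\rm d}s$. The proof then has three steps.

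\emph{Linear estimate.} We show $\|e^{\cdot\Delta}u_0\|_Y\lesssim\|u_0\|_{\dot H^s}$. On the Fourier side this amounts to
\[
\int_0^\infty\Bigl(\int|\xi|^{2\sigma}e^{-2t|\xi|^2}|\hat u_0|^2\Bigr)^{\nu}{\rm d}t\lesssim\|u_0\|_{\dot H^s}^{2\nu}.
\]
It follows by interpolation between the $L^\infty_t\dot H^s$ bound (trivial) and the $L^2_t\dot H^{s+1}$ bound, which is exact by Fubini since $\int_0^\infty|\xi|^2e^{-2t|\xi|^2}{\rm d}t=\frac12$. Hölder in time with the intermediate index (the $\dot H$ interpolation proposition) then gives $L^{2\nu}_t\dot H^{s+1/\nu}$. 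Continuity in time with values in $\dot H^s$ for the free part is dominated convergence, as in the proof of the heat equation theorem.

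\emph{Nonlinear estimate.} This is the main obstacle. We need
\[
\|F(u)-F(v)\|_Y\le M\|u-v\|_Y\bigl(\|u\|_Y^{\nu-1}+\|v\|_Y^{\nu-1}\bigr),
\]
i.e.\ $\epsilon=\nu-1$. For the Duhamel operator we use maximal regularity: by Proposition~\ref{prop:mixedMaxReg} and its $L^p$ analogue (Theorem~\ref{thm:MRLp}, applied in $\dot H^{\beta}$ via the Riesz/Fourier isometries), $F$ maps $L^{r}(0,\infty;\dot H^{\beta})$ into $L^{2\nu}(0,\infty;\dot H^{\sigma})$ whenever the time and space scalings match. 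Concretely, we aim at $r=2$ and $\beta=\sigma-2+2(\frac12-\frac1{2\nu})$: mixed-derivative regularity in time of order $\frac12-\frac1{2\nu}$ combined with the one-dimensional Sobolev embedding $\dot H^{\frac12-\frac1{2\nu}}(0,\infty)\hookrightarrow L^{2\nu}$. This is the route I would try first; alternatively, one could take $r=2\nu/\nu$ with Theorem~\ref{thm:MRLp}.

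The pointwise-in-time product bound
\[
\bigl\||u|^{\nu-1}u-|v|^{\nu-1}v\bigr\|_{\dot H^{\beta}}\lesssim\|u-v\|_{\dot H^\sigma}\bigl(\|u\|_{\dot H^\sigma}^{\nu-1}+\|v\|_{\dot H^\sigma}^{\nu-1}\bigr)
\]
is the delicate part. The elementary inequality $\bigl||u|^{\nu-1}u-|v|^{\nu-1}v\bigr|\le\nu|u-v|(|u|^{\nu-1}+|v|^{\nu-1})$ combined with Hölder and the Sobolev embedding \eqref{eq:sobolev-emb} handles it when $\beta\le0$, through the dual embedding $L^{r'}\hookrightarrow\dot H^{\beta}$ as in the proof of Lemma~\ref{lem:multSobolev}. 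A short exponent count shows the $L^p$ exponents match exactly because all spaces are critical. Hölder in time, with $\nu$ factors each in $L^{2\nu}_t$, then gives $L^2_t$. If $\beta>0$ for larger $n$, a fractional chain rule would be needed. To avoid this, I would instead choose the working space with spatial regularity as small as possible, shifting regularity to time via $\alpha$ in Proposition~\ref{prop:mixedMaxReg} so that $\beta\le0$. Sorting out this exponent bookkeeping is where I expect the main difficulty.

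\emph{Conclusion.} We take $\eta$ such that $C\eta\le\delta<\frac{1}{2(2M)^{1/(\nu-1)}}$. Theorem~\ref{thm:picard} then gives a fixed point $u\in Y$, which is the mild solution. Finally, $u\in{\mathscr{C}}([0,\infty);\dot H^s)$: the free part was handled above, and the Duhamel part is in $\dot H^{\alpha}_t$-type spaces with values in $\dot H^{s}$ with $\alpha=\frac12$ via Proposition~\ref{prop:mixedMaxReg}. More directly, we argue as in Theorem~\ref{thm:classicalsol} by dominated convergence on the Fourier side, using $|u|^{\nu-1}u\in L^2_t\dot H^{s-1}$ and the $L^\infty_t\dot H^{s}$ energy bound $\int_0^t|\xi|^{2}e^{-2(t-s)|\xi|^2}\lesssim1$.
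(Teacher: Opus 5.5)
Your route is genuinely different from the paper's. The paper proves only the case $n=4$, $\nu=3$. It works in the Kato-type space $Y=\{u\in{\mathscr{C}}_b([0,\infty);\dot H^1): t^{1/4}u\in{\mathscr{C}}_b([0,\infty);\dot H^{3/2})\}$ and uses only the pointwise smoothing bounds $\|(-\Delta)^{\gamma}e^{t\Delta}\|\lesssim t^{-\gamma}$, the embeddings $\dot H^1\hookrightarrow L^4$ and $\dot H^{3/2}\hookrightarrow L^8$, and Beta-function integrals. Continuity in $\dot H^1$ is built into that space.

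You work instead in the Strichartz-type space $L^{2\nu}(0,\infty;\dot H^{s+1/\nu})$. You use the $L^2$ mixed-derivative maximal regularity together with the one-dimensional embedding $\dot H^{\frac12-\frac1{2\nu}}\hookrightarrow L^{2\nu}$, which is the mechanism of Lemma~\ref{lem:propB}, part 2. You then recover ${\mathscr{C}}([0,\infty);\dot H^s)$ a posteriori by the energy bound. In the paper's case this works cleanly: $Y=L^6(0,\infty;\dot H^{4/3}({\mathds{R}}^4))$, $\dot H^{4/3}\hookrightarrow L^6$, $|u|^2u\in L^2_tL^2_x$, $\beta=0$ and $\alpha=\frac13$. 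Your linear estimate (interpolation plus the Fubini computation of Lemma~\ref{lem:L2H1}) and your continuity argument are correct. Your approach buys uniqueness in a different class and covers several $(n,\nu)$ at once. The price is the deeper maximal-regularity tool and a separate continuity step.

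As a proof of the statement for all admissible $(n,\nu)$, however, the nonlinear step has a genuine gap, and the remedy you suggest does not exist, because your exponents are rigid. Proposition~\ref{prop:mixedMaxReg} is $L^2$ in time, so the nonlinearity must lie in $L^2_t$. This forces $q=2\nu$, hence $\alpha=\frac12-\frac1{2\nu}$. Scaling then gives $\sigma=s+\frac1\nu$, and therefore always $\beta=\sigma-2(1-\alpha)=s-1$. There is no freedom to ``shift regularity to time''.

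Hölder and Sobolev put $|u|^{\nu-1}u$ in $L^2_tL^r_x$ with $\frac1r=\frac{\nu+1}{n(\nu-1)}=\frac12+\frac{1-s}{n}$. The dual embedding $L^r\hookrightarrow\dot H^{s-1}$ requires $0\le 1-s<\frac n2$, and this fails in two ways.
\begin{itemize}
\item For $s>1$ (e.g.\ $\nu=3$, $n\ge5$) you must bound $|u|^{\nu-1}u$ in $L^2_t\dot H^{s-1}$ with positive regularity. That needs a fractional Leibniz/chain rule, which is not available here. It also needs smoothness of $z\mapsto|z|^{\nu-1}z$, which fails for non-integer $\nu$ and large $s$.
\item The condition $1-s<\frac n2$ fails for $n=1$, where even $r<1$. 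It also fails for $n=2$, $\nu=3$: there $s=0$, $r=1$, and $L^1$ does not embed into $\dot H^{-1}({\mathds{R}}^2)$.
\end{itemize}
So what you have actually proved is the range $n\ge3$, $1+\frac4n\le\nu\le\frac{n+2}{n-2}$ (that is, $0\le s\le1$), together with $n=2$, $\nu>3$. This contains the case the paper proves, but not the full statement.
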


\begin{proof}
We will prove this theorem in the special case $n=4$ and $\nu=3$: this is what is closer to the scaling of the Navier-Stokes equations that 
we will investigate in the next section. In that case, $s=1$ and we know that $\dot H^1({\mathds{R}}^4)\hookrightarrow L^4({\mathds{R}}^4)$
by the Sobolev embedding (Theorem~\ref{thm:sobolev}). It is also immediate that \eqref{eq:NLHE} with $\nu=3$ and $n=4$ is homogeneous
with homogeneity $\alpha=1$ and that ${\mathscr{C}}_b([0,\infty);\dot H^1({\mathds{R}}^4))$ is critical for this homogeneity:
for all $u\in {\mathscr{C}}_b([0,\infty);\dot H^1({\mathds{R}}^4))$, for all $\lambda>0$,
\begin{equation}
\label{eq:dotH1critical}
\sup_{t>0}\|(t,x)\mapsto \lambda u(\lambda^2t,\lambda x)\|_{\dot H^1({\mathds{R}}^4)} =
\sup_{t>0}\|u(t,\cdot)\|_{\dot H^1({\mathds{R}}^4)}.
\end{equation}
Here: ${\mathscr{C}}_b$ stands for the space of bounded continuous functions. Since we want to find solutions 
$u\in {\mathscr{C}}_b([0,\infty);\dot H^1({\mathds{R}}^4))$, necessarily, $u_0=u(0,\cdot) \in \dot H^1({\mathds{R}}^4)$.

\noindent
Now, let $a(t):={\mathscr{F}}_\xi^{-1}\bigl(\xi\mapsto e^{-t|\xi|^2}{\mathscr{F}}_x(u_0)\bigr)$, $t\ge 0$ and
\begin{equation}
\label{eq:defF(u)}
F(u)(t)=\int_0^t {\mathscr{F}}_\xi^{-1}\Bigl(\xi \mapsto e^{-(t-s)|\xi|^2}{\mathscr{F}}_x\bigl(|u(s,\cdot)|^2 u(s,\cdot)\bigr)\Bigr)\,{\rm d}s, \quad
t\ge0.
\end{equation}
We need to find a space $Y\subset {\mathscr{C}}_b([0,\infty);\dot H^1({\mathds{R}}^4))$ for which 
the assumptions of Theorem~\ref{thm:picard} hold. In that case, we can find $u\in Y$ such that $u=a+F(u)$ if $\|a\|_Y$ 
is small enough, which is the definition of a mild solution of \eqref{eq:NLHE}. We define
\begin{equation}
\label{eq:Y}
Y:=\bigl\{u\in {\mathscr{C}}_b([0,\infty);\dot H^1({\mathds{R}}^4)); 
t\mapsto t^{\frac{1}{4}}u(t)\in {\mathscr{C}}_b([0,\infty);\dot H^{\frac{3}{2}}({\mathds{R}}^4))\bigr\},
\end{equation}
endowed with the norm
\[
\|u\|_Y:=\sup_{t\ge 0}\bigl\|u(t,\cdot)\bigr\|_{\dot H^1({\mathds{R}}^4)} +
\sup_{t\ge 0} t^{\frac{1}{4}}\bigl\|u(t,\cdot)\bigr\|_{\dot H^{\frac{3}{2}}({\mathds{R}}^4)}, \quad u\in Y.
\]
\begin{enumerate}
\item
$a\in Y$: indeed, let us prove first that $a\in {\mathscr{C}}_b([0,\infty);\dot H^1({\mathds{R}}^4))$: 
\[
|\xi| {\mathscr{F}}_xa(t,\cdot)(\xi)= e^{-t|\xi|^2}\bigl(|\xi|{\mathscr{F}}_xu_0(\xi)\bigr), \quad \xi \in {\mathds{R}}^4,
\] 
and therefore, for all $t\ge 0$
\[
\bigl\|a(t,\cdot)\bigr\|_{\dot H^1({\mathds{R}}^4)}\le \bigl\|\xi\mapsto |\xi|{\mathscr{F}}_xu_0(\xi)\bigr\|_{L^2({\mathds{R}}^4)}
=\|u_0\|_{\dot H^1({\mathds{R}}^4)}.
\]
Continuity in $t$ follows immediately from the continuity of $t\mapsto e^{-t|\xi|^2}$ for all $\xi \in {\mathds{R}}^4$.
To prove that $t\mapsto t^{\frac{1}{4}}a(t)\in {\mathscr{C}}_b([0,\infty);\dot H^{\frac{3}{2}}({\mathds{R}}^4))$, it suffices
to check that 
\[
\sup_{t\ge 0} t^{\frac{1}{4}}\bigl\|a(t,\cdot)\bigr\|_{\dot H^{\frac{3}{2}}({\mathds{R}}^4)}<\infty.
\]
This follows from the fact that $\sup_{t\ge 0} (t|\xi|^2)^{\frac{1}{4}}e^{-t|\xi|^2}=(4e)^{-\frac{1}{4}}<\infty$: write
\[
\bigl\|t^{\frac{1}{4}}a(t,\cdot)\bigr\|_{\dot H^{\frac{3}{2}}({\mathds{R}}^4)}\le \sup_{t\ge 0} \bigl((t|\xi|^2)^{\frac{1}{4}}e^{-t|\xi|^2}\bigr)\,
\bigl\|\xi\mapsto |\xi|{\mathscr{F}}_xu_0(\xi)\bigr\|_{L^2({\mathds{R}}^4)}
\lesssim \|u_0\|_{\dot H^1({\mathds{R}}^4)}.
\]
\item
Next, we prove that for $u\in Y$, $F(u)\in {\mathscr{C}}_b([0,\infty);\dot H^1({\mathds{R}}^4))$.
By the Sobolev embedding $\dot H^{\frac{3}{2}}({\mathds{R}}^4)\hookrightarrow L^8({\mathds{R}}^4)$, we have that
$t\mapsto t^{\frac{1}{2}}|u(t)|^2$ belongs to ${\mathscr{C}}_b([0,\infty);L^4({\mathds{R}}^4))$. If we multiply this by
$u\in Y\subset {\mathscr{C}}_b([0,\infty);\dot H^1({\mathds{R}}^4))\hookrightarrow {\mathscr{C}}_b([0,\infty);L^4({\mathds{R}}^4))$, 
we obtain that $g:t\mapsto t^{\frac{1}{2}}|u(t)|^2u(t)$ belongs to ${\mathscr{C}}_b([0,\infty);L^2({\mathds{R}}^4))$ with the estimate
\[
\sup_{t>0}\bigl\|t^{\frac{1}{2}}|u(t)|^2u(t)\bigr\|_{L^2({\mathds{R}}^4)}\le \|u\|_Y^3.
\]
We now use this estimate on $g$ to estimate $F(u)$ in ${\mathscr{C}}_b([0,\infty);\dot H^1({\mathds{R}}^4))$: for $t\ge 0$,
\begin{align}
F(u)(t)=&{\mathscr{F}}_\xi^{-1}\Bigl(\xi\mapsto \int_0^t e^{-(t-s)|\xi|^2} s^{-\frac{1}{2}} {\mathscr{F}}_xg(s,\cdot)(\xi)\,{\rm d}s\Bigr)
\label{eq:estF(u)}\\
=&{\mathscr{F}}_\xi^{-1}\Bigl(\xi\mapsto \int_0^t \sqrt{(t-s)|\xi|^2}\, e^{-(t-s)|\xi|^2} s^{-\frac{1}{2}} (t-s)^{-\frac{1}{2}}
|\xi|^{-1}{\mathscr{F}}_xg(s,\cdot)(\xi)\,{\rm d}s\Bigr).\nonumber
\end{align}
We have then for all $t\ge0$
\begin{align}
\|F(u)(t)\|_{\dot H^1({\mathds{R}}^4)}\le& \sup_{\sigma >0} \bigl(\sqrt{\sigma |\xi|^2}\, e^{-\sigma|\xi|^2}\bigr) 
\Bigl(\int_0^t \frac{1}{\sqrt{t-s}\sqrt{s}}\,{\rm d}s\Bigr)\sup_{s>0}\|g(s,\cdot)\|_{L^2({\mathds{R}}^4)} \nonumber\\
\lesssim & \Bigl(\int_0^1\frac{1}{\sqrt{1-\tau}\sqrt{\tau}}\,{\rm d}\tau\Bigr)\|u\|_Y^3. \label{eq:estF(u)H1}
\end{align}
\item
The next step is to prove that for $u\in Y$, $t\mapsto t^{\frac{1}{4}}F(u)(t)\in {\mathscr{C}}_b([0,\infty);\dot H^{\frac{3}{2}}({\mathds{R}}^4))$.
Using the formula \eqref{eq:estF(u)}, 
\[
F(u)(t)={\mathscr{F}}_\xi^{-1}\Bigl(\xi\mapsto \int_0^t \bigl((t-s)|\xi|^2\bigr)^{\frac{3}{4}}\, e^{-(t-s)|\xi|^2} s^{-\frac{1}{2}} (t-s)^{-\frac{3}{4}}
|\xi|^{-\frac{3}{2}}{\mathscr{F}}_xg(s,\cdot)(\xi)\,{\rm d}s\Bigr)
\]
and proceeding as in \eqref{eq:estF(u)H1} since 
\[
\sup_{\sigma >0} \bigl((\sigma |\xi|^2)^{\frac{3}{4}}\, e^{-\sigma|\xi|^2}\bigr) <\infty,
\]
we have for all $t>0$
\begin{align*}
\|F(u)(t)\|_{\dot H^{\frac{3}{2}}({\mathds{R}}^4)}\le& \sup_{\sigma >0} \bigl((\sigma |\xi|^2)^{\frac{3}{4}}\, e^{-\sigma|\xi|^2}\bigr) 
\Bigl(\int_0^t \frac{1}{(t-s)^{\frac{3}{4}}\sqrt{s}}\,{\rm d}s\Bigr)\sup_{s>0}\|g(s,\cdot)\|_{L^2({\mathds{R}}^4)}\\
\lesssim & t^{-\frac{1}{4}}\Bigl(\int_0^1\frac{1}{(1-\tau)^{\frac{3}{4}}\sqrt{\tau}}\,{\rm d}\tau\Bigr)\|u\|_Y^3, 
\end{align*}
which proves the claim.
\item
Finally, we need to prove the estimate on $\|F(u)-F(v)\|_Y$ as in Theorem~\ref{thm:picard}. It follows immediately from the points 2 and 3
above and the fact that 
\begin{equation}
\label{eq:fLip}
\bigl||u|^2u-|v|^2v\bigr|\le |u|^2|u-v|+\bigl|u|^2-|v|^2\bigr||v|\le \tfrac{3}{2}\,|u-v|(|u|^2+|v|^2)
\end{equation}
which proves \eqref{eq:estPicard} with $\epsilon=2$.
\item
We apply now Theorem~\ref{thm:picard} and prove that there is $\delta>0$ such that if $\|u_0\|_{\dot H^1({\mathds{R}}^4)}\le \delta$,
then there exists a unique mild solution of \eqref{eq:NLHE} with $n=4$ and $\nu=3$
in the space $Y\subset {\mathscr{C}}_b([0,\infty);\dot H^1({\mathds{R}}^4))$ defined by \eqref{eq:Y}.\qedhere
\end{enumerate}
\end{proof}

\begin{exercise}
Prove \eqref{eq:dotH1critical}
\end{exercise}

\subsubsection{Uniqueness of mild solutions}

The existence result we just proved shows also uniqueness, but in the space $Y$. The question is now: do we have uniqueness in 
$X:={\mathscr{C}}_b([0,\infty);\dot H^1({\mathds{R}}^4))$? This is where we will use the maximal regularity result (Theorem~\ref{thm:MRLp}).

\begin{theorem}
\label{thm:NLHEuniqueness}
Let $0<T\le \infty$.
Assume that $u$ and $v$ are two mild solutions of \eqref{eq:NLHE} ($i.e.$ satisfying $u=a+F(u)$ and $v=a+F(v)$ where $a$ and $F$ have 
been defined in the proof of Theorem~\ref{thm:existenceNLHE}) with $n=4$ and $\nu=3$ for the same initial value 
$u_0\in \dot H^1({\mathds{R}}^4)$ in ${\mathscr{C}}_b([0,T);\dot H^1({\mathds{R}}^4))$. Then $u=v$ on $[0,T)$.
\end{theorem}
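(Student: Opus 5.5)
Let $w=u-v$. Then $w=F(u)-F(v)$, i.e.
\[
w(t)=\int_0^t e^{(t-s)\Delta}\bigl(|u|^2u-|v|^2v\bigr)(s)\,{\rm d}s .
\]
The plan is to show $w=0$ on a short interval $[0,T_1]$ and then propagate. The right tool is maximal regularity (Theorem~\ref{thm:MRLp}), applied one derivative below $L^2$, i.e.\ on $\dot H^{-1}({\mathds{R}}^4)$ rather than on $L^2$. The idea is that the nonlinearity is controlled in $L^2(0,T_1;\dot H^{-1})$ by $w$ measured in $L^2(0,T_1;\dot H^1)$: maximal regularity with $p=2$ in the $\dot H^{-1}$ setting (the exercise after Proposition~\ref{prop:MRL2}) gives
\[
\|w\|_{L^2(0,T_1;\dot H^1)}\lesssim \bigl\||u|^2u-|v|^2v\bigr\|_{L^2(0,T_1;\dot H^{-1})}.
\]
By duality and Sobolev embedding, $L^{4/3}({\mathds{R}}^4)\hookrightarrow \dot H^{-1}({\mathds{R}}^4)$, since $\dot H^1\hookrightarrow L^4$. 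Using \eqref{eq:fLip} and H\"older, together with $\dot H^1\hookrightarrow L^4$,
\[
\bigl\||u|^2u-|v|^2v\bigr\|_{L^{4/3}}\lesssim \|w\|_{L^4}\bigl(\|u\|_{L^4}^2+\|v\|_{L^4}^2\bigr)\lesssim \|w\|_{\dot H^1}\bigl(\|u\|_{L^4}^2+\|v\|_{L^4}^2\bigr).
\]
This is where the difficulty lies. Taking the $\sup$ in time of $\|u\|_{\dot H^1}^2$ gives a factor $\|u\|^2_{L^\infty(\dot H^1)}$, which is not small, so the estimate does not close directly.

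To get smallness, I would split $u=a+\tilde u$ and $v=a+\tilde v$, where $a=e^{t\Delta}u_0$, and write $|u|^2\le 2|a|^2+2|\tilde u|^2$.

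For the linear part, I would use the time-integrable norm $a\in L^4(0,\infty;L^8)$. This follows from $a\in L^4(0,\infty;\dot H^{3/2})$, which holds because $\int_0^\infty \|a\|_{\dot H^{3/2}}^4\,{\rm d}t$ is comparable, via the Fourier side and Fubini, to $\|u_0\|_{\dot H^1}^4$-type quantities. More simply, $a\in L^2(0,\infty;\dot H^2)$ by the energy identity, and interpolating with $L^\infty(\dot H^1)$ gives $L^4(\dot H^{3/2})$. The resulting estimate is
\[
\bigl\||a|^2w\bigr\|_{L^2(0,T_1;L^{4/3})}\le \|a\|_{L^4(0,T_1;L^8)}^2\,\|w\|_{L^\infty(0,T_1;L^{8/3}\ \text{or}\ \cdot)},
\]
with exponents to be matched. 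Alternatively, I would work with $w$ in $L^2(\dot H^1)\cap L^\infty(\dot H^0)$, obtained from the maximal regularity mixed estimate (Proposition~\ref{prop:mixedMaxReg}). Then $\|a\|_{L^4(0,T_1;L^8)}\to0$ as $T_1\to0$ by dominated convergence.

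For the nonlinear parts $\tilde u=F(u)$ and $\tilde v=F(v)$, I would exploit that they vanish at $t=0$ and are continuous in $\dot H^1$, so $\sup_{[0,T_1]}\|\tilde u\|_{\dot H^1}\to0$. Strictly, this continuity gives $\|u(t)-u_0\|_{\dot H^1}\to0$, hence $\|\tilde u(t)\|_{\dot H^1}\le \|u(t)-u_0\|+\|a(t)-u_0\|\to 0$.

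Choosing $T_1$ small, both parts give a constant $<1$, so $\|w\|\le \tfrac12\|w\|$ on $[0,T_1]$ and hence $w=0$ there. The remaining technical check is that $w$ lies in the required space at all. It does, since the nonlinearity lies in $L^\infty(0,T_1;\dot H^{-1})\subset L^2(0,T_1;\dot H^{-1})$, because $|u|^2u\in L^\infty(L^{4/3})$.

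Finally, let $T^*=\sup\{t: u=v \text{ on }[0,t]\}$. If $T^*<T$, I would restart at $T^*$ with initial value $u(T^*)=v(T^*)\in\dot H^1$, using the semigroup property to rewrite both as mild solutions from $T^*$. The short-time argument then contradicts the maximality of $T^*$, so $u=v$ on $[0,T)$.
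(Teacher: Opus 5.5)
Your argument is correct in substance, but it follows a genuinely different route from the paper.

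\textbf{How the two routes differ.} The paper measures $w=u-v$ only at the critical level, in $L^p(0,\tau;\dot H^1({\mathds{R}}^4))$. It makes the contribution of $a$ small by splitting $a=(a-a_\varepsilon)+a_\varepsilon$ with $a_\varepsilon(t)=e^{(t+\varepsilon)\Delta}u_0$. The terms carrying $u-a$, $v-a$ and $a-a_\varepsilon$ are small in $L^\infty(\dot H^1)$. The term with $a_\varepsilon\in{\mathscr{C}}_b([0,T);\dot H^{3/2})$ yields a factor $\tau^{1/4}$ via $L^{8/5}\hookrightarrow\dot H^{-1/2}$ and the smoothing of $|\xi|^{3/2}e^{-(t-s)|\xi|^2}$.

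You instead work one derivative lower, in $Z_{T_1}:=L^2(0,T_1;\dot H^1)\cap L^\infty(0,T_1;L^2)$. You replace the $a_\varepsilon$ device by the scale-invariant time integrability $a\in L^4(0,\infty;\dot H^{3/2})\subset L^4(0,\infty;L^8)$, which is the mechanism the paper uses for Navier--Stokes in Theorem~\ref{thm:uniquenessLn}. The exponents you left open are forced:
\[
\bigl\||a|^2w\bigr\|_{L^2(0,T_1;L^{4/3})}\le\|a\|_{L^4(0,T_1;L^8)}^2\|w\|_{L^\infty(0,T_1;L^2)}.
\]
So the $L^\infty(L^2)$ bound on $w$ is essential to your route. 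Your version trades the regularization and the extra smoothing estimate for a single H\"older inequality in scale-invariant norms. The cost is that you must control $w$ below the critical regularity, which the paper never needs.

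\textbf{The step that needs a different justification.} The $L^\infty(L^2)$ bound does not come from Proposition~\ref{prop:mixedMaxReg}. With $s=-1$ and $\alpha=\frac12$ that proposition only gives $w\in\dot H^{1/2}(0,\infty;L^2)$, and $\dot H^{1/2}$ in one variable does not embed into $L^\infty$. The bound follows instead from Cauchy--Schwarz on the Fourier side. With $f=|u|^2u-|v|^2v$,
\[
|\hat w(t,\xi)|^2\le\Bigl(\int_0^t|\xi|^2e^{-2(t-s)|\xi|^2}\,{\rm d}s\Bigr)\Bigl(\int_0^t|\xi|^{-2}|\hat f(s,\xi)|^2\,{\rm d}s\Bigr)\le\tfrac12\int_0^t|\xi|^{-2}|\hat f(s,\xi)|^2\,{\rm d}s .
\]
Hence $\sup_{t\le T_1}\|w(t)\|_2\le\frac{1}{\sqrt2}\|f\|_{L^2(0,T_1;\dot H^{-1})}$. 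This is finite because $f\in L^\infty(0,T;L^{4/3})$, so $w$ does have finite $Z_{T_1}$-norm.

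Combined with $L^2$ maximal regularity, this gives
\[
\|w\|_{Z_{T_1}}\lesssim\Bigl(\|a\|^2_{L^4(0,T_1;L^8)}+\sup_{[0,T_1]}\|u-a\|^2_{\dot H^1}+\sup_{[0,T_1]}\|v-a\|^2_{\dot H^1}\Bigr)\|w\|_{Z_{T_1}} .
\]
Your absorption and continuation arguments then go through as written.
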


\begin{proof}
Let $u,v$ two mild solutions of \eqref{eq:NLHE} with $n=4$ and $\nu=3$: $u=a+F(u)$ and $v=a+F(v)$ so that
\begin{align*}
|u(t)-v(t)|=&|F(u)(t)-F(v)(t)|\\
=&\Bigl|\int_0^te^{(t-s)\Delta}\bigl(|u(s)|^2u(s)-|v(s)|^2v(s)\bigr) \,{\rm d}s\Bigr|.
\end{align*}
To prove that $u=v$ on $[0,\tau]$ for a $\tau >0$, it suffices to prove that $u-v$ satisfies 
$\|u-v\|_{L^p(0,\tau;\dot H^1({\mathds{R}}^4))}\le \frac{1}{2}\|u-v\|_{L^p(0,\tau;\dot H^1({\mathds{R}}^4))}$ for a $p\in(1,\infty)$.
Let $\varepsilon>0$. Denote by $a_\varepsilon$ the function 
$a_\varepsilon(t):=e^{(t+\varepsilon)\Delta}u_0={\mathscr{F}}^{-1}_\xi\bigl(\xi\mapsto e^{-(t+\varepsilon) |\xi|^2}{\mathscr{F}}_x(u_0)(\xi)\bigr)$.
It is easy to see that $a_\varepsilon\in {\mathscr{C}}_b([0,T);\dot H^{\frac{3}{2}}({\mathds{R}}^4))$ with norm
$\sup_{t\in[0,T)}\|a_\varepsilon(t)\|_{\dot H^{\frac{3}{2}}}\le \varepsilon^{-\frac{1}{4}}\|u_0\|_{\dot H^1}$ and
\[
\sup_{t\ge 0}\|a(t)-a_\varepsilon(t)\|_{\dot H^1}\le \|u_0-u_{0,\varepsilon}\|_{\dot H^1}\xrightarrow[\varepsilon\to 0]{}0
\] 
where $u_{0,\varepsilon}={\mathscr{F}}^{-1}_\xi\bigl(\xi\mapsto e^{-\varepsilon |\xi|^2}{\mathscr{F}}_x(u_0)(\xi)\bigr)$.
We also have
\[
\sup_{0\le t\le \tau}\|u(t)-a(t)\|_{\dot H^1}\xrightarrow[\tau\to 0]{}0\quad \mbox{and}\quad 
\sup_{0\le t\le \tau}\|v(t)-a(t)\|_{\dot H^1}\xrightarrow[\tau\to 0]{}0.
\] 
Moreover,
\begin{align}
|u|^2u-|v|^2v=&|u|^2(u-v) +v (u-v)(u+v)\nonumber\\
&\hspace*{-2cm}=(u-v)\bigl(u(u-a)+(u+v)(v-a)+a(2u+v)\bigr)\nonumber\\
&\hspace*{-2cm}=(u-v)\bigl(u(u-a)+(u+v)(v-a)+(2u+v)(a-a_\varepsilon)+(2u+v)a_\varepsilon\bigr) \label{eq:u3-v3}
\end{align}
We define $\Phi:L^p(0,T;\dot H^1({\mathds{R}}^4))\times X\times X\to L^p(0,T;\dot H^1({\mathds{R}}^4))$ (see below) by
\[
\Phi(u,v,w)(t) := {\mathscr{F}}_\xi^{-1}\Bigl(\xi\mapsto \int_0^te^{-(t-s)|\xi|^2}{\mathscr{F}}_x\bigl(u(s)v(s)w(s)\bigr)(\xi) \,{\rm d}s\Bigr),
\quad t\ge0
\]
so that $F(u)=\Phi(u,u,u)$ for all $u\in X$. It is immediate thanks to \eqref{eq:u3-v3} that
\begin{align}
F(u)-F(v) = \,&\Phi(u-v,u,u-a)+\Phi(u-v,u+v,v-a) \nonumber\\
&+\Phi(u-v,2u+v,a-a_\varepsilon)+\Phi(u-v, 2u+v, a_\varepsilon). \label{eq:F(u)-F(v)}
\end{align}
The proof that $\Phi$ maps $L^p(0,T;\dot H^1({\mathds{R}}^4))\times X\times X$ to $L^p(0,T;\dot H^1({\mathds{R}}^4))$ comes
from its expression using the maximal regularity operator ${\mathscr{M}}$ from \eqref{eq:opMR}. 
  
\noindent
Indeed, for 
$u\in L^p(0,T;\dot H^1({\mathds{R}}^4))$ and $v,w\in X$, we have that $uvw$ belongs to the space
$L^p(0,T; L^{\frac{4}{3}}({\mathds{R}}^4))$ which embeds into
$L^p(0,T; \dot H^{-1}({\mathds{R}}^4))$ and therefore,
\[
\Phi(u,v,w) ={\mathscr{M}} \psi(u,v,w), \quad \psi(u,v,w)={\mathscr{F}}_\xi^{-1}\bigl(\xi\mapsto \tfrac{1}{|\xi|^2}{\mathscr{F}}_x(uvw)(\xi)\bigr).
\]
By Sobolev embeddings ($\dot H^1\hookrightarrow L^4$ and $L^{\frac{4}{3}}\hookrightarrow \dot H^{-1}$ in dimension 4), we have
\[
\bigl\|\psi(u,v,w)\bigr\|_{L^p(0,T;\dot H^1({\mathds{R}}^4))}\lesssim \|u\|_{L^p(0,T;\dot H^1({\mathds{R}}^4))}\|v\|_{X}\|w\|_{X}.
\]
We also have, denoting by $W$ the space ${\mathscr{C}}_b([0,T);\dot H^{\frac{3}{2}}({\mathds{R}}^4))$, 
\[
uvw\in L^p(0,T;L^{\frac{8}{5}}({\mathds{R}}^4))\hookrightarrow L^p(0,T;\dot H^{-\frac{1}{2}}({\mathds{R}}^4))
\]
with
\[
\|{\mathscr{F}}_\xi^{-1}\bigl(\xi\mapsto \tfrac{1}{|\xi|^{\frac{3}{2}}}{\mathscr{F}}_x(uvw)(\xi)\bigr)\|_{L^p(0,T;\dot H^{1}({\mathds{R}}^4))}
\lesssim \|u\|_{L^p(0,T;\dot H^1({\mathds{R}}^4))}\|v\|_X\|w\|_W
\]
and then
\[
\Phi(u,v,w)(t)={\mathscr{F}}_\xi^{-1}\Bigl(\xi\mapsto \int_0^t |\xi|^{\frac{3}{2}}e^{-(t-s)|\xi|^2}
\tfrac{1}{|\xi|^{\frac{3}{2}}}{\mathscr{F}}_x\bigl(u(s)v(s)w(s)\bigr)(\xi) \,{\rm d}s\Bigr), \quad t\in (0,T)
\]
can be estimated in $L^p(0,\tau;\dot H^1({\mathds{R}}^4))$ for every $\tau \in (0,T)$ as follows:
\[
\bigl\|\Phi(u,v,w)\bigr\|_{L^p(0,\tau;\dot H^1({\mathds{R}}^4))}\le C \tau^{\frac{1}{4}}\|u\|_{L^p(0,\tau;\dot H^1({\mathds{R}}^4))}\|v\|_X\|w\|_W.
\]
Using now the decomposition \eqref{eq:F(u)-F(v)}, we have for all $\tau\in (0,T)$
\begin{align*}
\|u-v\|_{L^p(0,\tau;\dot H^1({\mathds{R}}^4))}\le  &C \,\|u-v\|_{L^p(0,\tau;\dot H^1({\mathds{R}}^4))}\cdot\\
&\Bigl( \|u\|_X\|u-a\|_{L^\infty(0,\tau;\dot H^1({\mathds{R}}^4))}  +(\|u\|_X+\|v\|_X)\|v-a\|_{L^\infty(0,\tau;\dot H^1({\mathds{R}}^4))} \\
&+ (2\|u\|_X+\|v\|_X)\|a-a_\varepsilon\|_X+\tau^{\frac{1}{4}}(2\|u\|_X+\|v\|_X)\|a_\varepsilon\|_W \Bigr)
\end{align*}
We first choose $\varepsilon>0$ such that $\|a-a_\varepsilon\|_X\le \frac{1}{6C(2\|u\|_X+\|v\|_X)}$. Then let $\tau_0\in(0,T)$ such that
$\tau_0^{\frac{1}{4}}\|a_\varepsilon\|_W\le \frac{1}{6C(2\|u\|_X+\|v\|_X)}$. Finally, let $\tau\in (0,\tau_0]$ such that 
$\|u-a\|_{L^\infty(0,\tau;\dot H^1({\mathds{R}}^4))}\le \frac{1}{6C(2\|u\|_X+\|v\|_X)}$ and 
$\|v-a\|_{L^\infty(0,\tau;\dot H^1({\mathds{R}}^4))}\le \frac{1}{6C(2\|u\|_X+\|v\|_X)}$. With these choices, we have that
\[
\|u-v\|_{L^p(0,\tau;\dot H^1({\mathds{R}}^4))}\le \tfrac{1}{2}\,\|u-v\|_{L^p(0,\tau;\dot H^1({\mathds{R}}^4))},
\]
so that $u=v$ on $[0,\tau)$.

\noindent
Now, the set $E:=\{0\le t <T; u(t)=v(t)\}$ is non empty ($0\in E$), closed (due to the continuity of $u-v$) and open (that's just what we proved,
up to a translation changing $u_0$ at $t=0$ with $u(t_0)$ at $t=t_0$ if necessary). Since $[0,T)$ is connected, this means that $E=[0,T)$.
\end{proof}

\section{The Navier-Stokes equations}

The (simplified) equations that we consider in this section are the $n$-dimensional Navier-Stokes system ($n\ge 3$)
without external force for which we set the constant density and the viscosity to be equal to $1$
\begin{equation}
\label{eq:NS}\tag{NS}
\begin{array}{rclcl}
\partial_tu-\Delta u+\nabla\pi+(u\cdot\nabla)u&=&0&\mbox{ in }&(0,T)\times{\mathds{R}}^n
\\[4pt]
{\rm div}\,u&=&0&\mbox{ in }&(0,T)\times{\mathds{R}}^n
\end{array}
\end{equation}
where $0<T\le\infty$, the velocity 
$u:(0,T)\times{\mathds{R}}^n\to{\mathds{R}}^n$ and the pressure 
$\pi:(0,T)\times{\mathds{R}}^n\to{\mathds{R}}$ being the unknown. Here, $\partial_t$ denotes the 
partial derivative with respect to time $t\in(0,T)$, 
$\Delta=\sum_{j=1}^n\partial_j^2$ denotes the Laplacian in ${\mathds{R}}^n$ and 
$u\cdot\nabla:=\sum_{j=1}^n u_j\partial_j$ whenever this makes sense.

\begin{notation}
Recall that we denote by $e^{t\Delta}$, for $t\ge 0$ the operator acting on $\dot H^s({\mathds{R}}^n)$
($s\in{\mathds{R}}$) as follows:
\[
e^{t\Delta}g:={\mathscr{F}}_\xi^{-1}\bigl(\xi\mapsto e^{-t|\xi|^2}{\mathscr{F}}_x(g)(\xi)\bigr),\quad g\in\dot H^s({\mathds{R}}^n).
\]
\end{notation}

\begin{exercise}
\label{ex:heatanalytic}
Let $s\in{\mathds{R}}$.
\begin{enumerate}
\item
Prove that for all $g\in \dot H^s({\mathds{R}}^n)$, $t>0$ and all $\alpha\ge 0$, we have that $(-\Delta)^{\alpha}e^{t\Delta}g$ belongs
to $\dot H^s({\mathds{R}}^n)$ and 
\[
\bigl\|(-\Delta)^{\alpha}e^{t\Delta}g\bigr\|_{\dot H^s}\lesssim t^{-\alpha}\|g\|_{\dot H^s}.
\]
\item
Prove that for all $g\in \dot H^s({\mathds{R}}^n)$, $t>0$ and all $\alpha\ge 0$, we have that $e^{t\Delta}g$ belongs
to $\dot H^{s+2\alpha}({\mathds{R}}^n)$ and 
\[
\bigl\|e^{t\Delta}g\bigr\|_{\dot H^{s+2\alpha}}\lesssim t^{-\alpha}\|g\|_{\dot H^s}.
\]
\end{enumerate}
\end{exercise}

\begin{lemma}
The system \eqref{eq:NS} (with $T=\infty$) is homogeneous of order $1$ for $u$ and $2$ for the pressure $\pi$. More precisely, if $(u,\pi)$ is 
a solution of \eqref{eq:NS} for the initial data $u_0$, then for all $\lambda >0$, $(u_\lambda,\pi_\lambda)$ defined by
\[
u_\lambda(t,x)=\lambda u(\lambda^2t,\lambda x), \quad \pi_\lambda(t,x)=\lambda^2\pi(\lambda^2t,\lambda x), \quad t>0, \ x\in{\mathds{R}}^n
\]
is a solution of \eqref{eq:NS} for the initial data ${\mathds{R}}^n \ni x\mapsto u_{0,\lambda}(x):=\lambda u_0(\lambda x)$.
\end{lemma}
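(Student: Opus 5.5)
The plan is a direct verification by the chain rule, computing each term of \eqref{eq:NS} for $(u_\lambda,\pi_\lambda)$ in terms of the corresponding term for $(u,\pi)$ evaluated at the rescaled point $(\lambda^2t,\lambda x)$. Fix $\lambda>0$ and write $(s,y)=(\lambda^2 t,\lambda x)$. Since $(u,\pi)$ solves \eqref{eq:NS} on $(0,\infty)\times{\mathds{R}}^n$ and the map $(t,x)\mapsto(\lambda^2t,\lambda x)$ is a bijection of $(0,\infty)\times{\mathds{R}}^n$ onto itself, it suffices to show that every term of the system for $(u_\lambda,\pi_\lambda)$ at $(t,x)$ equals $\lambda^3$ times the corresponding term for $(u,\pi)$ at $(s,y)$. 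The divergence condition is treated the same way, with a factor $\lambda^2$ instead of $\lambda^3$.

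The term-by-term computations are as follows. First, $\partial_t u_\lambda(t,x)=\lambda\cdot\lambda^2(\partial_s u)(s,y)=\lambda^3\partial_su(s,y)$. Second, each spatial derivative produces one factor $\lambda$, so $\Delta u_\lambda(t,x)=\lambda^3(\Delta u)(s,y)$. Third, $\nabla\pi_\lambda(t,x)=\lambda^2\cdot\lambda(\nabla\pi)(s,y)=\lambda^3\nabla\pi(s,y)$. Fourth, for the convection term,
\[
(u_\lambda\cdot\nabla)u_\lambda(t,x)=\sum_{j=1}^n \lambda u_j(s,y)\,\lambda^2(\partial_ju)(s,y)=\lambda^3\bigl((u\cdot\nabla)u\bigr)(s,y).
\]
Summing these four identities gives
\[
\bigl(\partial_tu_\lambda-\Delta u_\lambda+\nabla\pi_\lambda+(u_\lambda\cdot\nabla)u_\lambda\bigr)(t,x)=\lambda^3\bigl(\partial_su-\Delta u+\nabla\pi+(u\cdot\nabla)u\bigr)(s,y)=0.
\]
For the incompressibility condition, ${\rm div}\,u_\lambda(t,x)=\lambda^2({\rm div}\,u)(s,y)=0$. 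For the initial data, $u_\lambda(0,x)=\lambda u(0,\lambda x)=\lambda u_0(\lambda x)=u_{0,\lambda}(x)$.

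The key observation is that the exponents $1$ for $u$ and $2$ for $\pi$ are forced. With $u_\lambda=\lambda^a u(\lambda^2t,\lambda x)$, the linear terms scale like $\lambda^{a+2}$ and the convection term like $\lambda^{2a+1}$. Balancing these gives $a=1$, and then $\nabla\pi_\lambda$ must also scale like $\lambda^3$, which forces the factor $\lambda^2$ on $\pi$.

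There is essentially no obstacle. The only point needing care is the notion of solution: if solutions are understood in the distributional or mild sense rather than classically, I would repeat the same scaling argument. For mild solutions, I would use the identity $e^{t\Delta}(g(\lambda\cdot))=(e^{\lambda^2t\Delta}g)(\lambda\cdot)$, which follows from Theorem~\ref{thm:Fourier1} (point 4), together with a change of variables in the Duhamel integral. For weak solutions, the change of variables is instead performed against test functions.
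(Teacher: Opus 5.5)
Your proposal is correct and is exactly the direct chain-rule verification the paper intends; the paper gives no proof and leaves this lemma as an exercise. Each term of the momentum equation for $(u_\lambda,\pi_\lambda)$ at $(t,x)$ equals $\lambda^3$ times the corresponding term for $(u,\pi)$ at $(\lambda^2t,\lambda x)$, the divergence picks up a factor $\lambda^2$, and $u_\lambda(0,\cdot)=u_{0,\lambda}$. Your side remark on mild solutions is also sound, provided you add that ${\mathbb{P}}$ commutes with the dilation $g\mapsto g(\lambda\cdot)$, because its Fourier symbol is homogeneous of degree $0$.
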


\begin{proof}
Exercise!
\end{proof}

\begin{proposition}
\label{prop:criticalNS}
The space ${\mathscr{C}}_b([0,\infty);\dot H^s({\mathds{R}}^n;{\mathds{R}}^n))$ is critical for \eqref{eq:NS} if $s=\tfrac{n}{2}-1$.
\end{proposition}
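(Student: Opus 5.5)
Following Definition~\ref{def:critical}, ``critical'' means that the norm $\sup_{t\ge0}\|u(t,\cdot)\|_{\dot H^s}$ is invariant under the scaling $u\mapsto u_\lambda$, $u_\lambda(t,x)=\lambda u(\lambda^2t,\lambda x)$, given by the previous lemma. The plan is to compute $\|u_\lambda(t,\cdot)\|_{\dot H^s}$ for fixed $t$ on the Fourier side, obtain a power of $\lambda$, and choose $s$ so that this power is $0$. The time variable only enters through $t\mapsto\lambda^2t$, which is a bijection of $[0,\infty)$. Hence $\sup_{t\ge0}$ and continuity/boundedness in time are unaffected, and membership of $u_\lambda$ in ${\mathscr{C}}_b([0,\infty);\dot H^s)$ follows once the spatial norm scales correctly.

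\textbf{Key step: spatial dilation of one function.} Fix $t$ and set $g=u(\lambda^2t,\cdot)$, so that $u_\lambda(t,\cdot)=\lambda\,g(\lambda\,\cdot)$. Write $g(\lambda\,\cdot)=g_{1/\lambda}$ in the notation of Theorem~\ref{thm:Fourier1}, point 4. This gives
\[
{\mathscr{F}}\bigl(g(\lambda\,\cdot)\bigr)(\xi)=\lambda^{-n}\hat g\bigl(\tfrac{\xi}{\lambda}\bigr).
\]
For $g\in\dot H^s$ the identity holds in ${\mathscr{S}}'$ by duality, since it holds for Schwartz functions and passes to tempered distributions. Then
\[
\|u_\lambda(t,\cdot)\|_{\dot H^s}^2=\lambda^{2}\lambda^{-2n}\int_{{\mathds{R}}^n}|\xi|^{2s}\bigl|\hat g(\xi/\lambda)\bigr|^2\,{\rm d}\xi .
\]
Substituting $\xi=\lambda\eta$ turns the integral into $\lambda^{2s+n}\int|\eta|^{2s}|\hat g(\eta)|^2{\rm d}\eta$. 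The total power of $\lambda$ is therefore $2+2s-n$, which vanishes exactly when $s=\frac n2-1$. In that case $\|u_\lambda(t,\cdot)\|_{\dot H^s}=\|u(\lambda^2t,\cdot)\|_{\dot H^s}$. The condition ${\mathscr{F}}(u_\lambda(t))\in L^1_{\rm loc}$ is also preserved, since local integrability is stable under dilation of the variable. The same computation applies to each component of the ${\mathds{R}}^n$-valued field.

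\textbf{Conclusion.} Take the supremum over $t\ge0$ and substitute $t'=\lambda^2t$. This gives
\[
\sup_{t\ge0}\|u_\lambda(t)\|_{\dot H^{n/2-1}}=\sup_{t'\ge0}\|u(t')\|_{\dot H^{n/2-1}},
\]
which is the claimed invariance. Continuity of $t\mapsto u_\lambda(t)$ in $\dot H^s$ follows from the continuity of $u$ together with the isometry just established. The only point needing care is justifying the dilation formula for tempered distributions (rather than $L^1$ functions), and this is a routine duality argument.
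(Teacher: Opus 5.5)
Your proof is correct and takes essentially the same route as the paper. You use the dilation rule of Theorem~\ref{thm:Fourier1} (point 4) to get $\|u_\lambda(t,\cdot)\|_{\dot H^s}=\lambda^{1+s-\frac{n}{2}}\|u(\lambda^2t,\cdot)\|_{\dot H^s}$ and then take the supremum in $t$, which forces $s=\frac{n}{2}-1$; your write-up is slightly more careful than the paper's (whose time argument should read $u(\lambda^2t,\cdot)$), and you also check continuity in time and the $L^1_{\rm loc}$ condition.
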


\begin{proof}
According to Definition~\ref{def:critical}, we need to find $s\in{\mathds{R}}$ for which 
\[
\sup_{t>0}\|u(t,\cdot)\|_{\dot H^s}=\sup_{t>0}\|u_\lambda(t,\cdot)\|_{\dot H^s}, \quad \mbox{for all }\lambda>0
\mbox{ and all }u\in \dot H^s({\mathds{R}}^n,{\mathds{R}}^n).
\]
We have, thanks to Theorem~\ref{thm:Fourier1}, for all $\lambda>0$ and all $\xi\in{\mathds{R}}^n$, and all $t>0$,
${\mathscr{F}}_x(u_\lambda(t,\cdot)(\xi)=\lambda^{-n+1}{\mathscr{F}}_x\bigl(u_\lambda(\lambda t,\cdot)\bigr)\bigl(\tfrac{\xi}{\lambda}\bigr)$,
so that 
\[
\bigl\|u_\lambda(t,\cdot)\bigr\|_{\dot H^s}=\lambda^{-\frac{n}{2}+1+s}\bigl\|u(\lambda t,\cdot)\bigr\|_{\dot H^s}, \quad t>0.
\]
This implies that the two norms are equal for all $\lambda>0$ (after taking the $\sup_{t>0}$) if, and only if $-\frac{n}{2}+1+s=0$, or 
as claimed, $s=\frac{n}{2}-1$.
\end{proof}

\subsection{Mild solutions}
\label{subsec:mild}

Formally, we can eliminate the pressure term in \eqref{eq:NS} by taking the divergence of the
first equation. Since ${\rm div}\,u=0$, 
\[
(u\cdot\nabla)u=\sum_{j=1}^nu_j\partial_ju=\sum_{j=1}^n\partial_j(u_ju)=:\nabla\cdot(u\otimes u)
\]
and we obtain
\[
\Delta \pi=-{\rm div}\,\bigl(\nabla\cdot(u\otimes u)\bigr)=-\sum_{j,k=1}^n\partial_ju_k\partial_ku_j,
\]
so that 
\[
\nabla\pi=\nabla(-\Delta)^{-1}{\rm div}\,\bigl(\nabla(u\otimes u)\bigr)
=\nabla {\mathscr{F}}_\xi^{-1}\bigl(\xi\mapsto\sum_{j,k=1}^n\tfrac{i\xi_ji\xi_k}{|\xi|^2}{\mathscr{F}}_x(u_ju_k)(\xi)\bigr).
\] 
The system \eqref{eq:NS} with initial condition $u_0:{\mathds{R}}^n\to{\mathds{R}}^n$ 
satisfying ${\rm div}\,u_0=0$ then becomes
\begin{equation}
\label{eq:NS1}
\partial_tu-\Delta u=-{\mathbb{P}}\bigl(\nabla\cdot(u\otimes u)\bigr),\quad u(0)=u_0,
\end{equation}
where 
${\mathbb{P}}:={\rm Id}+\nabla(-\Delta)^{-1}{\rm div}\,={\rm Id}+{\mathcal{R}}\otimes{\mathcal{R}}$ 
is the {\tt Leray projection} operator (or {\tt Helmholtz projection}) defined on tempered 
distributions ${\mathscr{S}}'({\mathds{R}}^n,{\mathds{R}}^n)$ (recall that ${\mathcal{R}}$ denotes the vector
of Riesz transforms; ${\mathcal{R}}_j=\partial_j(-\Delta)^{-1/2}$). Its Fourier symbol is given by 
$\bigl(\delta_{jk}-\frac{\xi_j\xi_k}{|\xi^2|}\bigr)_{1\le j,k\le n}$ (where $\delta_{jk}$ denotes the
{\tt Kronecker} symbol, is equal to~$1$ if $j=k$ and $0$ otherwise). From that, we obtain
the following result.

\begin{proposition}
\label{prop:LerayR3}
The Leray projection operator maps tempered distributions to divergence free tempered distributions. 
It maps also Sobolev $\dot H^s$ vector fields to $\dot H^s$ divergence free vector fields for all
$s\in{\mathds{R}}$.
\end{proposition}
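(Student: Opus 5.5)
The plan is to work entirely on the Fourier side, where ${\mathbb{P}}$ acts as multiplication by the matrix symbol
\[
m(\xi)=\bigl(m_{jk}(\xi)\bigr)_{1\le j,k\le n},\qquad m_{jk}(\xi)=\delta_{jk}-\tfrac{\xi_j\xi_k}{|\xi|^2},\quad \xi\neq0 .
\]
For a vector field $u=(u_1,\dots,u_n)$ we set
\[
\widehat{{\mathbb{P}}u}_j=\sum_{k=1}^n m_{jk}\,\hat u_k .
\]
The divergence is computed on the Fourier side via \eqref{eq:diff-mult}:
\[
{\mathscr{F}}({\rm div}\,v)(\xi)=\sum_j i\xi_j\hat v_j(\xi).
\]
Applied to $v={\mathbb{P}}u$, this gives
\[
\sum_j i\xi_j\Bigl(\hat u_j-\tfrac{\xi_j}{|\xi|^2}\sum_k\xi_k\hat u_k\Bigr)
= i\sum_j\xi_j\hat u_j- i\tfrac{|\xi|^2}{|\xi|^2}\sum_k\xi_k\hat u_k=0,\qquad \xi\ne0 .
\]
This pointwise algebraic identity, $\xi^{\mathsf T}m(\xi)=0$, is the heart of the statement. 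We also record that $m(\xi)$ is the orthogonal projection onto $\xi^\perp$. Hence $m(\xi)^2=m(\xi)$, $m(\xi)$ is symmetric, and $|m(\xi)\zeta|\le|\zeta|$ for every $\zeta\in{\mathds{C}}^n$.

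For the Sobolev statement, take $u\in\dot H^s({\mathds{R}}^n;{\mathds{R}}^n)$. Then $\hat u\in L^1_{\rm loc}$ and $|\xi|^s\hat u\in L^2$. Since $|m(\xi)\hat u(\xi)|\le|\hat u(\xi)|$ almost everywhere, $m\hat u$ is again locally integrable, and
\[
\int|\xi|^{2s}|m(\xi)\hat u(\xi)|^2\,{\rm dm}_n(\xi)\le\|u\|_{\dot H^s}^2 .
\]
So ${\mathbb{P}}u\in\dot H^s$ with $\|{\mathbb{P}}u\|_{\dot H^s}\le\|u\|_{\dot H^s}$. Equivalently, one can invoke the boundedness of the Riesz transforms on $\dot H^s$ proved above, since ${\mathbb{P}}={\rm Id}+{\mathcal{R}}\otimes{\mathcal{R}}$. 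The divergence-free property holds because the Fourier transform of ${\rm div}\,{\mathbb{P}}u$ is the locally integrable function $i\xi\cdot m(\xi)\hat u(\xi)$. This function vanishes for $\xi\ne0$, hence almost everywhere, so ${\rm div}\,{\mathbb{P}}u=0$ in ${\mathscr{S}}'$. Real-valuedness is preserved because $m$ is real and even in $\xi$. If one wants the image to be exactly the divergence-free fields, note also that ${\mathbb{P}}u=u$ whenever $\xi\cdot\hat u=0$.

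The main obstacle is the first claim, for general tempered distributions. The symbol $\xi_j\xi_k/|\xi|^2$ is bounded, homogeneous of degree $0$, and smooth away from the origin, but it is discontinuous at $0$. Multiplying an arbitrary element of ${\mathscr{S}}'$ by it is therefore not automatically defined; for instance $\hat u=\delta_0$ is problematic. The plan is to interpret the statement in the framework the paper implicitly uses: ${\mathbb{P}}$ is applied to $u\in{\mathscr{S}}'$ with $\hat u\in L^1_{\rm loc}$ and polynomial growth (or, say, $\hat u$ a function of the kind appearing in the $\dot H^s$ definitions). For such $u$, $m\hat u$ is again a locally integrable function of polynomial growth, hence tempered. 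The divergence computation then goes through as above, using the Fourier-side derivative formula for tempered distributions from the earlier exercise. I would state explicitly that this is the class on which ${\mathbb{P}}$ is understood, remarking that for $\hat u$ supported away from $0$ the symbol is smooth and everything is classical.
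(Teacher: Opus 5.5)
Your argument is correct and is essentially the paper's. The paper only cites the $\dot H^s$-boundedness of the Riesz transforms, which is the same Fourier-multiplier bound you derive; your remark that $m(\xi)$ is an orthogonal projection just sharpens the constant to $1$. The paper also leaves implicit the identity $\sum_j\xi_j\,m_{jk}(\xi)=0$ behind the divergence-free claim, which you verify. Your caveat on the first claim is well founded: the symbol has no limit at $\xi=0$, so ${\mathbb{P}}$ is not defined on all of ${\mathscr{S}}'$ (for a constant field $\hat u$ is a multiple of $\delta_0$ and $m\hat u$ is meaningless). Restricting to $\hat u\in L^1_{\rm loc}$ with tempered growth, as you do, or working modulo polynomials, is the correct reading of the statement.
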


\begin{proof}
The fact that ${\mathbb{P}}$ maps $\dot H^s$ to $\dot H^s$ boundedly comes from the fact
that Riesz transforms are bounded in $\dot H^s$.
\end{proof}

For the right hand-side of \eqref{eq:NS1},
$-{\mathbb{P}}\bigl(\nabla\cdot(u\otimes u)\bigr)$, to make sense, it is sufficient that $u\otimes u$ is
a tempered distribution in the space variable, which is achieved if for almost every $t\in (0,T)$,
there is a $p\ge 2$ such that $u(t,\cdot)\in L^p$. In that case,
\eqref{eq:NS1} has merely the form of the heat equation with initial condition $u_0$ and
forcing term $-{\mathbb{P}}\bigl(\nabla\cdot(u\otimes u)\bigr)$. We are now in position to
give the definition of the solutions we will consider. 

\begin{definition}
\label{def:defmild}
A {\tt mild solution} of \eqref{eq:NS} on $(0,T)\times{\mathds{R}}^n$ with initial condition $u_0$ 
satisfying ${\rm div}\,u_0=0$
is a vector field $u:[0,T)\times{\mathds{R}}^n\to{\mathds{R}}^n$ solution of the integral equation
\begin{equation}
\label{eq:intNS}
u(t)=e^{t\Delta}u_0-\int_0^te^{(t-s)\Delta}{\mathbb{P}}\bigl(\nabla\cdot(u\otimes u)(s)\bigr)\,{\rm d}s,
\quad 0\le t\le T,
\end{equation}
obtained as a fixed point (in an appropriate function space) of the transform
\[
v\mapsto \Bigl(t\mapsto 
e^{t\Delta}u_0-\int_0^te^{(t-s)\Delta}{\mathbb{P}}\bigl(\nabla\cdot(v\otimes v)(s)\bigr)\,{\rm d}s\Bigr).
\]
\end{definition}

We will apply our fixed point theorem (Theorem~\ref{thm:picard}) with $a(t)=e^{t\Delta}u_0$ and
$F(u)(t)=B(u,u)(t)$ where $B$ is defined by 
\begin{equation}
\label{eq:defB}
B:(u,v)\mapsto \Bigl(t\mapsto 
-\int_0^te^{(t-s)\Delta}{\mathbb{P}}\bigl(\nabla\cdot(u\otimes v)(s)\bigr)\,{\rm d}s\Bigr)
\end{equation}
in a Banach space ${\mathscr{E}}_T$ of vector fields defined on $(0,T)\times{\mathds{R}}^n$ to be
determined. 

\subsection{Existence in $\dot H^{n/2-1}$}
\label{subsec:mildSobolev}

We will construct mild solutions of \eqref{eq:NS} in $(0,T)$ ($0<T\le \infty$) in the spirit of Fujita and Kato (1964) 
\cite{FK64} in the
critical space 
\[
{\mathscr{E}}_T=
\bigl\{u\in {\mathscr{C}}((0,T);\dot H^{\frac{n-1}{2}}); 
\|u\|_{{\mathscr{E}}_T}:=\sup_{t\in(0,T)}\|t^{1/4}u(t)\|_{\dot H^{\frac{n-1}{2}}}<\infty\bigr\}  
\]
where
\[
\dot H^s:=\bigl\{f\in {\mathscr{S}}';\xi\mapsto |\xi|^s\hat f(\xi)\in  L^2\bigr\},
\]
endowed with the semi norm $\|f\|_{\dot H^s}=\|\xi\mapsto |\xi|^s\hat f(\xi)\|_{L^2}$.
Thanks to Lemma~\ref{prop:criticalNS}, the space ${\mathscr{C}}([0,\infty);\dot H^{\frac{n}{2}-1})$
is a critical space for \eqref{eq:NS}.

\begin{lemma}
The space ${\mathscr{E}}_\infty$ is critical for \eqref{eq:NS}.
\end{lemma}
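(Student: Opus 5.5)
We check directly that the norm of ${\mathscr{E}}_\infty$ is invariant under the Navier--Stokes scaling $u\mapsto u_\lambda$, $u_\lambda(t,x)=\lambda u(\lambda^2t,\lambda x)$, in the sense of Definition~\ref{def:critical}: for all $u\in{\mathscr{E}}_\infty$ and all $\lambda>0$ we will have $u_\lambda\in{\mathscr{E}}_\infty$ and $\|u_\lambda\|_{{\mathscr{E}}_\infty}=\|u\|_{{\mathscr{E}}_\infty}$. The argument follows the proof of Proposition~\ref{prop:criticalNS}, with the spatial regularity index $\frac{n-1}{2}$ and the extra time weight $t^{1/4}$.

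The first step is the computation of a homogeneous Sobolev norm of a dilation. For a fixed time, set $g(x)=u(\lambda^2t,x)$, so that $u_\lambda(t,\cdot)=\lambda\, g(\lambda\,\cdot)$. By Theorem~\ref{thm:Fourier1} (point~4, with $\lambda$ replaced by $1/\lambda$), ${\mathscr{F}}(g(\lambda\cdot))(\xi)=\lambda^{-n}\hat g(\xi/\lambda)$. Substituting $\eta=\xi/\lambda$ in the integral defining the $\dot H^s$ norm then gives
\[
\|g(\lambda\,\cdot)\|_{\dot H^s}^2=\int_{{\mathds{R}}^n}|\xi|^{2s}\lambda^{-2n}|\hat g(\xi/\lambda)|^2\,{\rm dm}_n(\xi)=\lambda^{2s-n}\|g\|_{\dot H^s}^2 .
\]
Hence $\|u_\lambda(t)\|_{\dot H^s}=\lambda^{1+s-\frac{n}{2}}\|u(\lambda^2t)\|_{\dot H^s}$. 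We will note in passing that $\hat g\in L^1_{\rm loc}$ is preserved by dilation, so $u_\lambda(t)\in\dot H^s$ whenever $u(\lambda^2t)\in\dot H^s$.

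The second step inserts this into the norm of ${\mathscr{E}}_\infty$. Taking $s=\frac{n-1}{2}$, the exponent is $1+\frac{n-1}{2}-\frac n2=\frac12$. Therefore
\[
t^{1/4}\|u_\lambda(t)\|_{\dot H^{\frac{n-1}{2}}}=t^{1/4}\lambda^{1/2}\|u(\lambda^2t)\|_{\dot H^{\frac{n-1}{2}}}=(\lambda^2t)^{1/4}\|u(\lambda^2t)\|_{\dot H^{\frac{n-1}{2}}}.
\]
Taking the supremum over $t\in(0,\infty)$ and using that $t\mapsto\lambda^2t$ is a bijection of $(0,\infty)$, we obtain $\|u_\lambda\|_{{\mathscr{E}}_\infty}=\|u\|_{{\mathscr{E}}_\infty}$.

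Continuity of $t\mapsto u_\lambda(t)$ in $\dot H^{\frac{n-1}{2}}$ on $(0,\infty)$ follows from the continuity of $u$, composed with the time change $t\mapsto\lambda^2t$ and the spatial dilation, which is a bounded operator on $\dot H^s$ by the formula above. This step is routine. The only real point, and the reason the weight is $t^{1/4}$ and not another power, is the exact matching: the factor $\lambda^{1/2}$ lost by measuring $u$ at the supercritical regularity $\frac{n-1}{2}$ instead of $\frac n2-1$ is exactly compensated by $t^{1/4}$.
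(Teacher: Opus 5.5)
Your proof is correct and follows essentially the same route as the paper: you use point~4 of Theorem~\ref{thm:Fourier1} to obtain $\|u_\lambda(t)\|_{\dot H^{\frac{n-1}{2}}}=\lambda^{1/2}\|u(\lambda^2t)\|_{\dot H^{\frac{n-1}{2}}}$, absorb the factor $\lambda^{1/2}$ into $(\lambda^2t)^{1/4}$, and take the supremum over $t$ after the change of variable $\tau=\lambda^2t$. Your extra checks that $u_\lambda(t)\in\dot H^{\frac{n-1}{2}}$ and that $u_\lambda$ is continuous in time are details the paper leaves implicit.
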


\begin{proof}
We have to prove that for $\lambda>0$ and $u\in {\mathscr{E}}_\infty$ the following equality holds: 
$\|u_\lambda\|_{{\mathscr{E}}_\infty}=\|u\|_{{\mathscr{E}}_\infty}$.
Remark first that, taking the Fourier transform of $u_\lambda$ in the $x$ variable, we obtain:
\[
{\mathscr{F}}(u_\lambda)(t,\cdot))(\xi)=\lambda^{-n+1}{\mathscr{F}} (u(\lambda^2t, \cdot))\bigl(\tfrac{\xi}{\lambda}\bigr),
 \quad t>0, \xi\in{\mathds{R}}^n.
\]
Therefore, 
\[
\|u_\lambda(t,\cdot)\|_{H^{\frac{n-1}{2}}}^2 =\lambda \|u(\lambda^2t, \cdot)\|_{H^{\frac{n-1}{2}}}^2.
\]
This gives
\[
\|u_\lambda\|_{{\mathscr{E}}_\infty}=\sup_{t>0}t^{1/4}\|u_\lambda(t,\cdot)\|_{H^{\frac{n-1}{2}}}
=\sup_{\tau>0}\tau^{1/4}\|u(\tau,\cdot)\|_{H^{\frac{n-1}{2}}}=\|u\|_{{\mathscr{E}}_\infty},
\]
using the notation $\tau=\lambda^2t$.
\end{proof}

\begin{theorem}[Existence of mild solutions]
\label{thm:mildHs}
For all $u_0\in\dot H^{n/2-1}$ with ${\rm div}\,u_0=0$, there exist $T^*>0$ and 
$u\in{\mathscr{E}}_{T^*}$ mild solution of \eqref{eq:NS} on $(0,T^*)\times{\mathds{R}}^n$ 
with initial condition $u_0$. Moreover, $u\in{\mathscr{C}}([0,T^*);\dot H^{n/2-1})$.
\end{theorem}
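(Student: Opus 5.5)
We apply the Fixed Point Theorem~\ref{thm:picard} in $Y={\mathscr{E}}_T$ with $a(t)=e^{t\Delta}u_0$ and $F(u)=B(u,u)$, where $B$ is the bilinear map \eqref{eq:defB}. Since $F$ comes from a bilinear map, $F(u)-F(v)=B(u-v,u)+B(v,u-v)$. So \eqref{eq:estPicard} with $\epsilon=1$ follows from a single bilinear estimate
\[
\|B(u,v)\|_{{\mathscr{E}}_T}\le M\,\|u\|_{{\mathscr{E}}_T}\|v\|_{{\mathscr{E}}_T},
\]
with $M$ independent of $T$.

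\textbf{Bilinear estimate.} Fix $t\in(0,T)$. By Lemma~\ref{lem:multSobolev}, $\|u(s)\otimes v(s)\|_{\dot H^{n/2-1}}\lesssim s^{-1/2}\|u\|_{{\mathscr{E}}_T}\|v\|_{{\mathscr{E}}_T}$. The operator ${\mathbb{P}}\nabla\cdot$ has a symbol bounded by $|\xi|$, by Proposition~\ref{prop:LerayR3} and the boundedness of Riesz transforms. Hence, by Exercise~\ref{ex:heatanalytic},
\[
\bigl\|e^{(t-s)\Delta}{\mathbb{P}}\nabla\cdot(u\otimes v)(s)\bigr\|_{\dot H^{\frac{n-1}{2}}}\lesssim (t-s)^{-\frac34}\|u(s)\otimes v(s)\|_{\dot H^{\frac n2-1}},
\]
because the total gain is $1+\frac12=\frac32$ derivatives, which costs $(t-s)^{-3/4}$. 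Integrating in $s$ gives
\[
t^{1/4}\|B(u,v)(t)\|_{\dot H^{\frac{n-1}{2}}}\lesssim t^{1/4}\int_0^t (t-s)^{-3/4}s^{-1/2}\,{\rm d}s\ \|u\|\|v\| = C\int_0^1(1-\tau)^{-3/4}\tau^{-1/2}\,{\rm d}\tau\ \|u\|\|v\|,
\]
which is finite and uniform in $T$. Continuity in time of $B(u,v)$ on $(0,T)$ follows by dominated convergence on the Fourier side.

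\textbf{Smallness of $a$.} As in step 1 of the proof of Theorem~\ref{thm:existenceNLHE},
\[
t^{1/4}\|e^{t\Delta}u_0\|_{\dot H^{\frac{n-1}{2}}}=\bigl\|(t|\xi|^2)^{1/4}e^{-t|\xi|^2}|\xi|^{\frac n2-1}\hat u_0\bigr\|_{L^2}.
\]
The multiplier is bounded by a constant and tends to $0$ pointwise as $t\to0$. Dominated convergence then shows that $\sup_{t<T}t^{1/4}\|e^{t\Delta}u_0\|_{\dot H^{\frac{n-1}{2}}}\to0$ as $T\to0$. (Splitting $\hat u_0$ into low and high frequencies makes this uniform in $t<T$.) Choose $T^*$ so that $\|a\|_{{\mathscr{E}}_{T^*}}\le\delta$, where $\delta<1/(4M)$. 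Theorem~\ref{thm:picard} then gives a fixed point $u\in{\mathscr{E}}_{T^*}$, which is a mild solution in the sense of Definition~\ref{def:defmild}. The fixed point is divergence free because ${\mathbb{P}}$ and $e^{t\Delta}$ preserve ${\rm div}=0$.

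\textbf{Continuity in $\dot H^{n/2-1}$.} This is the main obstacle. Continuity of $a$ is as before. For $B(u,u)$, estimate with gain $\frac12+1$ derivatives from $\dot H^{n/2-1}$ back to $\dot H^{n/2-1}$, i.e.\ only $\nabla$ plus nothing. Concretely,
\[
\|e^{(t-s)\Delta}{\mathbb{P}}\nabla\cdot(u\otimes u)\|_{\dot H^{\frac n2-1}}\lesssim (t-s)^{-1/2}s^{-1/2}\|u\|^2_{{\mathscr{E}}_T},
\]
and $\int_0^t(t-s)^{-1/2}s^{-1/2}\,{\rm d}s=\pi$, so $B(u,u)$ is bounded in $\dot H^{n/2-1}$. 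To get continuity at $t=0$, I would use the fact that $\sup_{s<t}s^{1/4}\|u(s)\|_{\dot H^{\frac{n-1}{2}}}\to0$ as $t\to0$. This holds because it holds for $a$, and $u-a=B(u,u)$ is quadratically small. Then $\|B(u,u)(t)\|_{\dot H^{n/2-1}}\lesssim\|u\|_{{\mathscr{E}}_t}^2\to0$. Continuity at positive times follows by writing $B(u,u)(t+h)-B(u,u)(t)$ as $(e^{h\Delta}-{\rm Id})B(u,u)(t)$ plus an integral over $[t,t+h]$, each tending to $0$ by dominated convergence.
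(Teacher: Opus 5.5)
Your proposal is correct and follows the paper's proof essentially step for step. Both use the Picard argument in ${\mathscr{E}}_T$ with the $T$-uniform bilinear bound from Lemma~\ref{lem:multSobolev} and the $(t-s)^{-3/4}s^{-1/2}$ Beta integral, and both then use the $(t-s)^{-1/2}s^{-1/2}$ estimate for continuity in $\dot H^{n/2-1}$. The deviations are minor and harmless. You get $\|a\|_{{\mathscr{E}}_T}\to 0$ by dominated convergence on the multiplier $(t|\xi|^2)^{1/4}e^{-t|\xi|^2}$, where the paper approximates $u_0$ by Schwartz functions. You also make explicit a point the paper glosses over: continuity of $B(u,u)$ at $t=0$ requires $\|u\|_{{\mathscr{E}}_t}\to 0$, because $\int_0^t(t-s)^{-1/2}s^{-1/2}\,{\rm d}s=\pi$ does not shrink with $t$. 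This follows from $\|u\|_{{\mathscr{E}}_t}\le\|a\|_{{\mathscr{E}}_t}+M\|u\|_{{\mathscr{E}}_t}^2$ together with $M\|u\|_{{\mathscr{E}}_t}\le 2M\delta<\frac12$.
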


\begin{proof}
Let $a:t\mapsto e^{t\Delta}u_0$ and $B$ the bilinear transform defined by \eqref{eq:defB}
\[
B(u,v)(t):=-\int_0^t e^{(t-s)\Delta}{\mathbb{P}}\bigl(\nabla\cdot(u\otimes v)\bigr)(s)\,{\rm d}s,
\quad t\ge 0.
\]
We will show that for all $T>0$, $a\in {\mathscr{E}}_{T}$ and $B$ is bounded from
${\mathscr{E}}_{T}\times{\mathscr{E}}_{T}$ to ${\mathscr{E}}_{T}$, uniformly in $T>0$. 
It will then remain to prove that there exists $T^*>0$ such that $\|a\|_{{\mathscr{E}}_{T^*}}$
is small enough, so that we can apply Picard's contraction principle.

\medskip

\noindent
{\tt Step 1:} $a\in {\mathscr{E}}_{\infty}$. 

Remark that $a$ satisfies $\partial_t a-\Delta a=0$,
$a(0)=u_0$. Taking the Fourier transform in the space variable and solving a ordinary differential
equation, we arrive at ${\mathscr{F}}_x(a(t,\cdot)(\xi)=e^{-t|\xi|^2}{\mathscr{F}}_x(u_0)(\xi)$, $t\ge 0$, $\xi\in{\mathbb{R}}^n$. 
The following estimate holds for all $t>0$
\begin{align}
\label{eq:est_a}
\|t^{1/4}a(t,\cdot)\|_{\dot H^{\frac{n-1}{2}}}
&\le \|\xi\mapsto t^{1/4}|\xi|^{\frac{n-1}{2}}e^{-t|\xi|^2}|{\mathscr{F}}_x(u_0)(\xi)|\|_{L^2}
\nonumber\\
&=\bigl\|\xi\mapsto (t|\xi|^2)^{1/4}e^{-t|\xi|^2}\bigl(|\xi|^{n/2-1}|{\mathscr{F}}_x(u_0)(\xi)|\bigr)\bigr\|_{L^2}
\nonumber\\
&\le \sup_{s>0}\bigl(s^{1/4}e^{-s}\bigr)\,\|u_0\|_{\dot H^{n/2-1}},
\end{align}
where we have set $s=t|\xi|^2$ in the last inequality.

Clearly, for all $T>0$, ${\mathscr{E}}_T\subset {\mathscr{E}}_\infty$ (via restriction). Therefore,
$a\in{\mathscr{E}}_T$ for all $T>0$.

\medskip

\noindent
{\tt Step 2:}
Let $\varepsilon>0$. For every $u_0\in \dot H^{n/2-1}$, there exists $T^*>0$ such that 
$\|a\|_{{\mathscr{E}}_{T^*}}< \varepsilon$.

Indeed, the Schwartz space ${\mathscr{S}}$ is dense in $\dot H^{n/2-1}$. We can then find, 
for any $\delta >0$, $u_\delta\in{\mathscr{S}}$ so that 
$\|u_\delta-u_0\|_{\dot H^{n/2-1}}<\delta$. Let 
$a_\delta(t):=e^{t\Delta}u_\delta$, $t\ge 0$.
The estimate \eqref{eq:est_a} applied to $a-a_\delta$ shows that for all $T>0$,
\[
\|a-a_\delta\|_{{\mathscr{E}}_T}\le C\delta,
\]
the constant $C$ being independent from $T>0$.
Now, since $u_\delta\in{\mathscr{S}}\subset \dot H^{\frac{n-1}{2}}$ and thanks to the fact 
that the heat semigroup is a contraction on $\dot H^{\frac{n-1}{2}}$, we have 
$\|a_\delta(t,\cdot)\|_{\dot H^{\frac{n-1}{2}}}\le \|u_\delta\|_{\dot H^{\frac{n-1}{2}}}$. Therefore, for all $T>0$, we obtain
\[
\|a_\delta\|_{{\mathscr{E}}_T}=\sup_{t\in(0,T)}\bigl(t^{1/4}\|a_{\delta}(t,\cdot)\|_{\dot H^{\frac{n-1}{2}}}\bigr)
\le T^{1/4} \|u_\delta\|_{\dot H^{\frac{n-1}{2}}}.
\]
Choosing $\delta>0$ small enough so that $C\delta<\varepsilon/2$ and $T^*>0$ such
that $T^{*1/4} \|u_\delta\|_{\dot H^{\frac{n-1}{2}}}<\varepsilon/2$, we proved that
\[
\|a\|_{{\mathscr{E}}_{T^*}}\le \|a-a_\delta\|_{{\mathscr{E}}_{T^*}}+\|a_\delta\|_{{\mathscr{E}}_{T^*}}
\le C\delta+T^{*1/4} \|u_\delta\|_{\dot H^{\frac{n-1}{2}}}<\varepsilon,
\]
which is the claim.

\medskip

\noindent
{\tt Step 3:} $B$ is bounded from ${\mathscr{E}}_{T}\times{\mathscr{E}}_{T}$ to 
${\mathscr{E}}_{T}$, uniformly in $T>0$.

We use the result of Lemma~\ref{lem:multSobolev}.
Let $u,v\in{\mathscr{E}}_{T}$. Then 
$s\mapsto s^{1/2}u(s)\otimes v(s)\in{\mathscr{C}}([0,T);\dot H^{n/2-1})$. It follows that
$s\mapsto s^{1/2}{\mathbb{P}}\bigl(\nabla\cdot (u\otimes v)\bigr)(s)\in {\mathscr{C}}([0,T);\dot H^{n/2-2})$ 
since ${\mathbb{P}}$ is bounded on $\dot H^{n/2-2}$.
Therefore, by the mapping properties of the (inverse of the) Laplacian, we arrive at
\begin{equation}
\label{eq:cont-uv}
s\mapsto s^{1/2}(-\Delta)^{-3/4}{\mathbb{P}}\bigl(\nabla\cdot (u\otimes v)\bigr)(s)\in {\mathscr{C}}([0,T);\dot H^{\frac{n-1}{2}}).
\end{equation}
Now, taking into account the property proved in Exercise~\ref{ex:heatanalytic} with $s=0$, we have
that $\|(-\Delta)^\alpha e^{\sigma\Delta}\|_{L^2\to L^2}\le C_\alpha\,\sigma^{-\alpha}$. Thanks to
the boundedness of Riesz transforms, we obtain for $0<s<t<T$,
\begin{align}
\label{eq:est-Bbis}
\bigl\|e^{(t-s)\Delta}{\mathbb{P}}\bigl(\nabla\cdot(u\otimes v)\bigr)(s)\bigr\|_{\dot H^{\frac{n-1}{2}}}
\le &\bigl\|\bigl((-\Delta)^{3/4}e^{(t-s)\Delta}
(-\Delta)^{-3/4}{\mathbb{P}}\bigl(\nabla\cdot(u\otimes v)\bigr)(s)\bigr\|_{\dot H^{\frac{n-1}{2}}}
\nonumber\\
\le &C\,(t-s)^{-3/4}s^{-1/2}\|u\|_{{\mathscr{E}}_{T}}\|v\|_{{\mathscr{E}}_{T}},
\end{align}
for all $u,v\in {\mathscr{E}}_{T}$, the constant $C$ does not depend on $T$, but merely on
the properties of the Laplacian in ${\mathbb{R}}^n$. This shows the following estimate
\begin{align*}
t^{1/4}\|B(u,v)(t)\|_{\dot H^{\frac{n-1}{2}}}
&\le C\,\Bigl(t^{1/4}\int_0^t (t-s)^{-3/4}s^{-1/2}\,{\rm d}s\Bigr)
\|u\|_{{\mathscr{E}}_{T}}\|v\|_{{\mathscr{E}}_{T}}\\
&\le C\,\Bigl(\int_0^1(1-\sigma)^{-3/4}\sigma^{-1/2}\,{\rm d}\sigma\Bigr)
\|u\|_{{\mathscr{E}}_{T}}\|v\|_{{\mathscr{E}}_{T}},
\end{align*}
where we have set $\sigma=\frac{s}{t}$ in the last inequality. This proves the claim, the 
continuity of $t\mapsto t^{1/4} B(u,v)(t)$ from $[0,T)$ to $\dot H^1$ coming from the 
continuity property in \eqref{eq:cont-uv} preserved by convolution with the $L^1$ kernel
$\sigma\mapsto (-\Delta)^{3/4}e^{\sigma\Delta}$.

\medskip

\noindent
{\tt Step 4:} 
We can now apply Picard's contraction principle to prove that for all $u_0\in \dot H^{1/2}$,
there exists $T^*>0$ and $u\in {\mathscr{E}}_{T^*}$ mild solution of \eqref{eq:NS}; $T^*$ depends
on $u_0$ and is determined as in Step~2 with 
$\varepsilon=\frac{1}{4\| B\|_{{\mathscr{E}}_{T^*}\times{\mathscr{E}}_{T^*}\to{\mathscr{E}}_{T^*}}}$.
Remark that if $\|u_0\|_{\dot H^{\frac{n-1}{2}}}$ is small enough, then $T^*=\infty$, i.e., there is a global 
mild solution of \eqref{eq:NS}.

\medskip

\noindent
{\tt Step 5:} 
The mild solution $u\in {\mathscr{E}}_{T^*}$ from Step~4 is continuous from $[0,T^*)$ 
to $\dot H^{n/2-1}$.

The solution $u$ satisfies $u=a+ B(u,u)$ on $[0,T^*)$.
Using \eqref{eq:cont-uv} for $u=v$ as in \eqref{eq:est-Bbis}, we have for $0<s<t<T^*$
\begin{align*}
\bigl\|e^{(t-s)\Delta}{\mathbb{P}}\bigl(\nabla\cdot(u\otimes u)\bigr)(s)\bigr\|_{\dot H^{n/2-1}}
=& \bigl\|(-\Delta)^{1/2}e^{(t-s)\Delta}(-\Delta)^{-1/2}{\mathbb{P}}\bigl(\nabla\cdot(u\otimes u)\bigr)(s)\bigr\|_{\dot H^{n/2-1}}\\
\le &\bigl\|(-\Delta)^{1/2}e^{(t-s)\Delta}\bigr\|_{\dot H^{n/2-1}\to \dot H^{n/2-1}}
\bigl\|(u\otimes u)\bigr)(s)\bigr\|_{\dot H^{n/2-1}}\\
\le &C\,(t-s)^{-1/2}s^{-1/2}\|u\|_{{\mathscr{E}}_{T^*}}^2.
\end{align*}
This proves that
\begin{align*}
\|B(u,u)(t)\|_{\dot H^{1/2}}
\le& C\,\Bigl(\int_0^t(t-s)^{-1/2}s^{-1/2}\,{\rm d}s\Bigr)\|u\|_{{\mathscr{E}}_{T^*}}^2
\\
\le& C\,\Bigl(\int_0^1(1-\sigma)^{-1/2}\sigma^{-1/2}\,{\rm d}\sigma\Bigr)\|u\|_{{\mathscr{E}}_{T^*}}^2.
\end{align*}
Moreover, $t\mapsto B(u,u)(t)$ is continuous from $[0,T^*)$ to $\dot H^{n/2-1}$: this
comes from the continuity property \eqref{eq:cont-uv} for $u=v$ and $T=T^*$ preserved by
the convolution with the $L^1$ kernel $\sigma\mapsto (-\Delta)^{1/2}e^{\sigma\Delta}$.
It remains to prove that $a\in{\mathscr{C}}([0,T^*),\dot H^{n/2-1})$, which is a direct consequence
of the strong continuity of the heat semigroup on $\dot H^{n/2-1}$.
\end{proof}

\subsection{Uniqueness of mild solutions ($n\ge 3$)}
\label{subsec:uniqueness}

One may ask now whether any mild solution of \eqref{eq:NS} in ${\mathscr{C}}([0,T^*),Y)$ with
$Y=\dot H^{n/2-1}$ is unique. The uniqueness coming from the Picard's 
contraction principle occurs in the space ${\mathscr{E}}_{T^*}$ which is smaller than
${\mathscr{C}}([0,T^*),Y)$. This question has been a longstanding problem since the
existence of mild solutions has been proved (see the very documented review on the paper by
Furioli, Lemari\'e-Rieusset and Terraneo, published in 2000,
written by Marco Cannone in MathSciNet). We propose here a proof via 
maximal regularity (coming from \cite{Mo99}), following the same lines as what we did for the nonlinear heat equation
(see Theorem~\ref{thm:NLHEuniqueness}). 

\begin{lemma}
\label{lem:propB}
\begin{enumerate}
\item
There exists a constant $C>0$ such that for all $T>0$, for all $u\in L^4(0,T;\dot H^{\frac{n}{2}-1})$ and
all $v\in L^\infty(0,T;\dot H^{\frac{n}{2}-1})$, we have
\begin{equation}
\label{eq:BLnLn}
\|B(u,v)\|_{L^4(0,T;\dot H^{\frac{n}{2}-1})}+\|B(v,u)\|_{L^4(0,T;\dot H^{\frac{n}{2}-1})}\le 
C\|u\|_{L^4(0,T;\dot H^{\frac{n}{2}-1})}\|v\|_{L^\infty(0,T;\dot H^{\frac{n}{2}-1})}.
\end{equation}
\item
There exists a constant $C>0$ such that for all $T>0$, for all $u\in L^4(0,T;\dot H^{\frac{n}{2}-1})$ and
all $v\in L^4(0,T;\dot H^{\frac{n-1}{2}})$, we have
\begin{equation}
\label{eq:BLnL2n}
\|B(u,v)\|_{L^4(0,T;\dot H^{\frac{n}{2}-1})}+\|B(v,u)\|_{L^4(0,T;\dot H^{\frac{n}{2}-1})}
\le C\|u\|_{L^4(0,T;\dot H^{\frac{n}{2}-1})}\|v\|_{L^4(0,T;\dot H^{\frac{n-1}{2}})}.
\end{equation}
\end{enumerate}
\end{lemma}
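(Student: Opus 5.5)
Both estimates will be reduced to the maximal regularity operator ${\mathscr{M}}$ of \eqref{eq:opMR}, now acting on $L^4(0,T;X)$, combined with the fixed-time product estimates of Section~1. First rewrite $B$ as maximal regularity applied to a modified forcing. Because ${\mathbb{P}}$ and $\nabla\cdot$ commute with the heat semigroup and with fractional Laplacians (all are Fourier multipliers), we have
\[
B(u,v)(t)=-\int_0^t(-\Delta)e^{(t-s)\Delta}\,\psi(s)\,{\rm d}s={\mathscr{M}}\psi(t),\qquad
\psi:=-(-\Delta)^{-1}{\mathbb{P}}\nabla\cdot(u\otimes v).
\]
Theorem~\ref{thm:MRLp} is stated on $L^2$. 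Its proof, however, uses only Fourier multipliers in $\xi$, so it holds verbatim with $X=\dot H^{\frac n2-1}$: conjugate by $(-\Delta)^{\frac n4-\frac12}$, which is an isometry $\dot H^{\frac n2-1}\to L^2$ commuting with ${\mathscr{M}}$. Extending the time functions by $0$ beyond $T$, the constant does not depend on $T$. It therefore suffices to prove
\[
\|\psi\|_{L^4(0,T;\dot H^{\frac n2-1})}\lesssim \|(-\Delta)^{-\frac12}(u\otimes v)\|_{L^4(0,T;\dot H^{\frac n2-1})}=\|u\otimes v\|_{L^4(0,T;\dot H^{\frac n2-2})}.
\]
The first inequality holds because ${\mathbb{P}}$ and the Riesz transforms are bounded on every $\dot H^s$, and $\nabla(-\Delta)^{-1/2}$ is a vector of Riesz transforms. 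By symmetry of $u\otimes v$ versus $v\otimes u$ (componentwise products), $B(v,u)$ is handled identically.

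Next comes a fixed-time product estimate. For \eqref{eq:BLnLn} we need, for a.e. $s$,
\[
\|fg\|_{\dot H^{\frac n2-2}}\lesssim\|f\|_{\dot H^{\frac n2-1}}\|g\|_{\dot H^{\frac n2-1}}.
\]
For this, use the Sobolev embedding \eqref{eq:sobolev-emb}, $\dot H^{\frac n2-1}\hookrightarrow L^n$ (valid since $0<\frac n2-1<\frac n2$ for $n\ge3$), and Hölder to get $fg\in L^{n/2}$. Then use the dual embedding $L^{n/2}\hookrightarrow\dot H^{\frac n2-2}$, obtained by duality (Remark after the duality proposition) from $\dot H^{2-\frac n2}\hookrightarrow L^{\frac{n}{n-2}}$. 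This last embedding needs $0<2-\frac n2<\frac n2$, i.e. $n=3$. For $n\ge4$ the exponent $2-\frac n2\le0$, and we instead argue with Leibniz/derivative counting as in Lemma~\ref{lem:multSobolev}: split derivatives of order $\frac n2-2$ (with the half-integer handled by a $(-\Delta)^{-1/4}$ and duality as in the odd case) and place each factor in the Lebesgue space given by Sobolev. Taking the $L^4$ norm in time, with $u$ in $L^4$ and $v$ in $L^\infty$, then gives \eqref{eq:BLnLn}.

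For \eqref{eq:BLnL2n} the same scheme applies with
\[
\|fg\|_{\dot H^{\frac n2-2}}\lesssim\|f\|_{\dot H^{\frac n2-1}}\|g\|_{\dot H^{\frac{n-1}{2}}},
\]
which again holds by Sobolev and Hölder ($L^n\cdot L^{2n}\subset L^{2n/3}$ and $L^{2n/3}\hookrightarrow\dot H^{\frac n2-\frac32}$; check the indices). However, Hölder in time then gives $L^4\cdot L^4\subset L^2$, not $L^4$, so scaling shows a different maximal regularity exponent is needed. I would try an intermediate operator: write
\[
B(u,v)=\int_0^t(-\Delta)^{3/4}e^{(t-s)\Delta}\,\tilde\psi(s)\,{\rm d}s,
\]
whose kernel in time is bounded by $(t-s)^{-3/4}$ in operator norm, and apply the Hardy–Littlewood–Sobolev inequality in time: it maps $L^2\to L^4$ exactly since $\frac14=\frac12-\frac14$. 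This requires $\tilde\psi=(-\Delta)^{-3/4}{\mathbb{P}}\nabla\cdot(u\otimes v)\in L^2(0,T;\dot H^{\frac n2-1})$, i.e. $u\otimes v\in L^2(\dot H^{\frac n2-\frac32})$, which is precisely the product estimate above.

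The main obstacles are the index bookkeeping of the product estimates for general $n$ (the Leibniz-rule argument in the even/odd cases) and recognizing that part 2 needs fractional integration in time rather than ${\mathscr{M}}$ itself.
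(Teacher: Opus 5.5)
Your part 1 is the paper's argument. You write $B(u,v)=\mathscr{M}\psi$, where $\psi$ is a zero-order multiplier applied to $(-\Delta)^{-1/2}(u\otimes v)$. You then use $L^4$ maximal regularity transported to $\dot H^{\frac n2-1}$, together with the product estimate $\dot H^{\frac n2-1}\cdot\dot H^{\frac n2-1}\subset\dot H^{\frac n2-2}$. The extra minus sign in front of your middle integral is harmless.

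For part 2 you take a genuinely different route, and it works.

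\textbf{The paper's route.} It puts $\mathbb{P}\nabla\cdot(u\otimes v)\in L^2(0,T;\dot H^{\frac{n-5}{2}})$ into the mixed-derivative estimate of Proposition~\ref{prop:mixedMaxReg} with $\alpha=\frac14$. This gives $B(u,v)\in\dot H^{1/4}(0,T;\dot H^{\frac n2-1})$. It then uses the one-dimensional embedding $\dot H^{1/4}\hookrightarrow L^4$ for Hilbert-valued functions. Everything rests on Plancherel in $(t,x)$.

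\textbf{Your route.} You use only the smoothing bound $\|(-\Delta)^{3/4}e^{\sigma\Delta}\|\lesssim\sigma^{-3/4}$ from Exercise~\ref{ex:heatanalytic}. You combine it with the scalar Hardy--Littlewood--Sobolev inequality, applied to $s\mapsto\|\tilde\psi(s)\|_{\dot H^{\frac n2-1}}$. The exponents are right (kernel $|t|^{-3/4}$, $\frac14=\frac12-\frac14$), and the constant is independent of $T$.

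\textbf{What each buys.} Your argument is more elementary. It needs no Fourier transform in time and no Hilbert structure, so it would also work in $L^p$-based spaces. It also shows clearly why part 1 genuinely needs maximal regularity: there the kernel $(t-s)^{-1}$ is not locally integrable, so only cancellation helps, whereas part 2 is a plain fractional integration. The paper's argument stays within tools proved in the notes; HLS is not proved there, though it is equivalent to the embedding $\dot H^{1/4}(\mathds{R})\hookrightarrow L^4(\mathds{R})$ that the paper uses. It also yields the extra time regularity $\dot H^{1/4}$.

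One repair is needed in your part 2 product estimate.
\begin{itemize}
\item As displayed, with $\dot H^{\frac n2-2}$ on the left, it is false by scaling. The correct target, which you use afterwards, is $\dot H^{\frac{n-3}{2}}$.
\item Your justification via $L^{2n/3}\hookrightarrow\dot H^{\frac n2-\frac32}$ works only for $n=3$, where it reads $L^2=\dot H^0$. For $n\ge4$ it is the \emph{reverse} of the Sobolev embedding $\dot H^{\frac n2-\frac32}\hookrightarrow L^{2n/3}$, and it is false: test it on $e^{iNx_1}\varphi(x)$ as $N\to\infty$.
\item The estimate $\|fg\|_{\dot H^{\frac{n-3}{2}}}\lesssim\|f\|_{\dot H^{\frac n2-1}}\|g\|_{\dot H^{\frac{n-1}{2}}}$ is nevertheless true, by the Leibniz counting of Lemma~\ref{lem:multSobolev}.
\item For odd $n$, take $k=\frac{n-3}{2}$ derivatives. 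Put $D^jf\in L^{\frac{n}{1+j}}$ and $D^{k-j}g\in L^q$ with $\frac1q=\frac12-\frac{1+j}{n}$, so the product lands in $L^2$.
\item For even $n$, take $k=\frac n2-1$ derivatives. The terms land in $L^r$ with $\frac1r=\frac12+\frac1{2n}$, and then use $L^r\hookrightarrow\dot H^{-1/2}$.
\end{itemize}
The paper only asserts this estimate, so with this fix your proof is complete at the same level of detail.
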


\begin{proof}
To prove \eqref{eq:BLnLn} and \eqref{eq:BLnL2n}, we will use maximal regularity as in 
Theorem~\ref{thm:MRLp}. We will also need a variant of Lemma~\ref{lem:multSobolev} as follows:

Let $n\ge 2$. There exists $C>0$ such that for all $f,g\in \dot H^{\frac{n}{2}-1}$ we have that $fg\in\dot H^{\frac{n}{2}-2}$ and
the following estimate holds
\[
\|fg\|_{\dot H^{\frac{n}{2}-2}}\le C\ \|f\|_{\dot H^{\frac{n}{2}-1}}\|g\|_{\dot H^{\frac{n}{2}-1}}
\]
(the cases $n=3$ and $n=4$ are easy, using Sobolev embeddings; for $n\ge 5$, we apply the same method as in the proof
of Lemma~\ref{lem:multSobolev}).
\begin{enumerate}
\item
For $u\in L^4(0,T;\dot H^{\frac{n}{2}-1})$ and $v\in L^\infty(0,T;\dot H^{\frac{n}{2}-1})$, we have
\[
u\otimes v, v\otimes u\in L^4(0,T;\dot H^{\frac{n}{2}-2})
\]
with the accompanying estimate
\[
\bigl\|u\otimes v\bigr\|_{L^4(0,T;\dot H^{\frac{n}{2}-2})}+\bigl\|v\otimes u\bigr\|_{L^4(0,T;\dot H^{\frac{n}{2}-2})}
\le C \|u\|_{L^4(0,T;\dot H^{\frac{n}{2}-1})}\|v\|_{L^\infty(0,T;\dot H^{\frac{n}{2}-1})}
\]
where $C>0$ is a constant independent of $T>0$. Therefore, thanks to the mapping properties
of the (inverse of the) Laplacian and Sobolev embeddings, we have
\[
f:=(-\Delta)^{-1}{\mathbb{P}}\bigl(\nabla\cdot(u\otimes v)\bigr)\in L^4(0,T;\dot H^{\frac{n}{2}-1})
\]
and similarly
\[
g:=(-\Delta)^{-1}{\mathbb{P}}\bigl(\nabla\cdot(v\otimes u)\bigr)
\in L^4(0,T;\dot H^{\frac{n}{2}-1}).
\]
Now, using the maximal regularity property of the heat semigroup in $L^4(\dot H^{\frac{n}{2}-1})$,
we have that
\[
t\mapsto \int_0^t(-\Delta) e^{(t-s)\Delta}f(s)\,{\rm d}s=B(u,v)(t)\in L^4(0,T;\dot H^{\frac{n}{2}-1})
\]
and
\[
t\mapsto \int_0^t(-\Delta) e^{(t-s)\Delta}g(s)\,{\rm d}s=B(v,u)(t)\in L^4(0,T;\dot H^{\frac{n}{2}-1})
\]
and \eqref{eq:BLnLn} follows.
\item
For $u\in L^4(0,T;\dot H^{\frac{n}{2}-1})$ and $v\in L^4(0,T;\dot H^{\frac{n-1}{2}})$ we have that
\[
u\otimes v, v\otimes u\in L^2(0,T;\dot H^{\frac{n-3}{2}})
\] 
with the accompanying estimate
\[
\bigl\|u\otimes v\bigr\|_{L^2(0,T;\dot H^{\frac{n-3}{2}})}+\bigl\|v\otimes u\bigr\|_{L^2(0,T;\dot H^{\frac{n-3}{2}})}
\le C \|u\|_{L^4(0,T;\dot H^{\frac{n}{2}-1})}\|v\|_{L^4(0,T;\dot H^{\frac{n-1}{2}})}
\]
where $C>0$ is a constant independent of $T>0$. Therefore, we have that
\[
f:={\mathbb{P}}\bigl(\nabla\cdot(u\otimes v)\bigr),
g:={\mathbb{P}}\bigl(\nabla\cdot(v\otimes u)\bigr) \in L^2(0,T;\dot H^{\frac{n-5}{2}}).
\]
Now, using Proposition~\ref{prop:mixedMaxReg} for $\alpha=\frac{1}{4}$ and $s=\frac{n-5}{2}$,
we have that
\[
t\mapsto \int_0^t e^{(t-s)\Delta} f(s)\,{\rm d}s=B(u,v)(t)
\in \dot H^{1/4}(0,T;\dot H^{\frac{n-5}{2}+\frac{3}{2}})
\hookrightarrow L^4(0,T;\dot H^{\frac{n}{2}-1})
\]
by the Sobolev embeddings  $H^{1/4}\hookrightarrow L^4$ in dimension $1$ and
$\frac{n-5}{2}+\frac{3}{2}=\frac{n}{2}-1$.
And similarly with $g$ instead of $f$. The estimate \eqref{eq:BLnL2n} follows.\qedhere
\end{enumerate}
\end{proof}

\begin{theorem}[Uniqueness of mild solutions]
\label{thm:uniquenessLn}
Assume that there exist two mild solutions $u$ and $v$ in ${\mathscr{C}}([0,T),\dot H^{\frac{n}{2}-1})$
with the same initial condition $u_0\in \dot H^{\frac{n}{2}-1}$. Then $u=v$ on $[0,T)$.
\end{theorem}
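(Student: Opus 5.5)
We follow the scheme of Theorem~\ref{thm:NLHEuniqueness}, now using the estimates of Lemma~\ref{lem:propB}. Set $w:=u-v$ and $a(t):=e^{t\Delta}u_0$. Both solutions satisfy $u=a+B(u,u)$ and $v=a+B(v,v)$, so by bilinearity
\[
w=B(w,u)+B(v,w).
\]
We decompose each solution around a regularised free evolution. Fix $\varepsilon>0$ and let $a_\varepsilon(t):=e^{(t+\varepsilon)\Delta}u_0$. Then
\[
u=(u-a)+(a-a_\varepsilon)+a_\varepsilon,\qquad v=(v-a)+(a-a_\varepsilon)+a_\varepsilon .
\]
Inserting this decomposition gives
\begin{align*}
w=\;&B(w,u-a)+B(v-a,w)+B(w,a-a_\varepsilon)+B(a-a_\varepsilon,w)\\
&+B(w,a_\varepsilon)+B(a_\varepsilon,w).
\end{align*}
The aim is to show that, for small $\tau>0$,
\[
\|w\|_{L^4(0,\tau;\dot H^{\frac n2-1})}\le \tfrac12\|w\|_{L^4(0,\tau;\dot H^{\frac n2-1})}.
\]
This norm is finite since $w\in {\mathscr{C}}([0,\tau];\dot H^{\frac n2-1})$, so the inequality forces $w=0$ on $[0,\tau)$.

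The first four terms are handled by \eqref{eq:BLnLn}, which bounds them by
\[
C\|w\|_{L^4(0,\tau;\dot H^{\frac n2-1})}\Bigl(\|u-a\|_{L^\infty(0,\tau;\dot H^{\frac n2-1})}+\|v-a\|_{L^\infty(0,\tau;\dot H^{\frac n2-1})}+2\sup_{t}\|a(t)-a_\varepsilon(t)\|_{\dot H^{\frac n2-1}}\Bigr).
\]
The factor $\sup_t\|a(t)-a_\varepsilon(t)\|_{\dot H^{\frac n2-1}}$ is at most $\|u_0-e^{\varepsilon\Delta}u_0\|_{\dot H^{\frac n2-1}}$, since the heat semigroup is a contraction on $\dot H^{\frac n2-1}$. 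This tends to $0$ as $\varepsilon\to0$ by strong continuity of the semigroup, so we first choose $\varepsilon$ to make it small. The factors $\|u-a\|_{L^\infty(0,\tau;\dot H^{\frac n2-1})}$ and $\|v-a\|_{L^\infty(0,\tau;\dot H^{\frac n2-1})}$ tend to $0$ as $\tau\to0$. This uses continuity of $u,v,a$ into $\dot H^{\frac n2-1}$ at $t=0$ together with $u(0)=v(0)=a(0)=u_0$.

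The last two terms are where the smallness comes from, and this is the step to get right. We use \eqref{eq:BLnL2n} with $a_\varepsilon$ playing the role of the factor in $L^4(0,\tau;\dot H^{\frac{n-1}{2}})$. By Exercise~\ref{ex:heatanalytic},
\[
\|a_\varepsilon(t)\|_{\dot H^{\frac{n-1}{2}}}\lesssim (t+\varepsilon)^{-1/4}\|u_0\|_{\dot H^{\frac n2-1}}\le \varepsilon^{-1/4}\|u_0\|_{\dot H^{\frac n2-1}}.
\]
Hence
\[
\|a_\varepsilon\|_{L^4(0,\tau;\dot H^{\frac{n-1}{2}})}\lesssim \tau^{1/4}\varepsilon^{-1/4}\|u_0\|_{\dot H^{\frac n2-1}}.
\]
With $\varepsilon$ already fixed, this is small for $\tau$ small. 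The choices are therefore made in the order: $\varepsilon$ first, then $\tau$. Every constant must be made at most $\frac1{8C}$, so that the total factor in front of $\|w\|_{L^4(0,\tau;\dot H^{\frac n2-1})}$ is at most $\frac12$. We will also check that the constants in Lemma~\ref{lem:propB} do not depend on the interval length, as the lemma states. No separate argument is needed to localise to $(0,\tau)$: $B$ is causal, so $B$ on $(0,\tau)$ depends only on the data on $(0,\tau)$.

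Finally we conclude by connectedness, as in Theorem~\ref{thm:NLHEuniqueness}. The set $E:=\{t\in[0,T);\,u=v \text{ on }[0,t]\}$ is nonempty and closed, by continuity of $u-v$ in $\dot H^{\frac n2-1}$. It is also open. For $t_0\in E$, the time-shifted functions $u(t_0+\cdot)$ and $v(t_0+\cdot)$ are mild solutions with the common initial datum $u(t_0)\in\dot H^{\frac n2-1}$. This follows from the semigroup property, which rewrites the integral equation from time $t_0$. The argument above then applies with $u_0$ replaced by $u(t_0)$, giving $u=v$ on a neighbourhood of $t_0$. Hence $E=[0,T)$.
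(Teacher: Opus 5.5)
Your proof is correct. It shares the paper's architecture: the identity $w=B(w,u)+B(v,w)$, the two estimates of Lemma~\ref{lem:propB}, a contraction-type inequality in $L^4(0,\tau;\dot H^{\frac n2-1})$, and connectedness. It differs in how the free evolution $a$ is made small.

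The paper does not regularise. It writes $w=B(w,u-a)+B(v-a,w)+B(w,a)+B(a,w)$ and applies \eqref{eq:BLnL2n} with $a$ itself. For this it proves $a\in L^4(0,\infty;\dot H^{\frac{n-1}{2}})$ with $\|a\|_{L^4(0,\infty;\dot H^{\frac{n-1}{2}})}\le 2^{-1/2}\|u_0\|_{\dot H^{\frac n2-1}}$. This bound comes from Cauchy--Schwarz on the Fourier side, $\|a(t)\|_{\dot H^{\frac{n-1}{2}}}^4\le\|a(2t)\|_{\dot H^{\frac n2}}^2\|u_0\|_{\dot H^{\frac n2-1}}^2$, combined with the exact identity of Lemma~\ref{lem:L2H1}. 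Smallness on $(0,\tau)$ then follows from integrability alone, so only $\tau$ has to be chosen.

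You instead transplant the $a_\varepsilon$ device from Theorem~\ref{thm:NLHEuniqueness}. In your route this is genuinely needed: the pointwise smoothing bound $t^{-1/4}\|u_0\|_{\dot H^{\frac n2-1}}$ from Exercise~\ref{ex:heatanalytic} is not $L^4$-integrable at $t=0$, and the $\varepsilon$-shift removes that singularity. The price is two extra terms, which you correctly control by \eqref{eq:BLnLn} via $\sup_t\|a(t)-a_\varepsilon(t)\|_{\dot H^{\frac n2-1}}\le\|u_0-e^{\varepsilon\Delta}u_0\|_{\dot H^{\frac n2-1}}$. You also need a two-step choice of parameters, $\varepsilon$ first and then $\tau$.

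The paper's route is shorter and gives a quantitative global bound on $a$. Yours needs only the crude smoothing estimate and strong continuity of the semigroup. That makes it more robust: it is exactly what is forced in the nonlinear heat case, where the auxiliary norm is a supremum in time and no time-integrability of $a$ can help. Here, however, the extra approximation step is avoidable.
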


\begin{proof}
By definition of mild solutions, $u$ and $v$ satisfy
$u=a+B(u,u)$ and $v=a+B(v,v)$, where $a(t)=e^{t\Delta}u_0$, $t\ge 0$,  
and $B$ was defined by \eqref{eq:defB}.
Let now $w:=u-v$; since $B$ is bilinear, $w$ satisfies
\[
w=B(w,u)+B(v,w)=B(w,u-a)+B(v-a,w)+B(w,a)+B(a,w).
\]
The idea is to estimate $w$ in $L^4(0,\tau;\dot H^{\frac{n}{2}-1})$ for $0<\tau\le T$.
Using \eqref{eq:BLnLn} and \eqref{eq:BLnL2n} we have
\begin{align}
\label{eq:est-w}
&\|w\|_{L^4(0,\tau;\dot H^{\frac{n}{2}-1})}
\\
\le& C\|w\|_{L^4(0,\tau;\dot H^{\frac{n}{2}-1})}\bigl(\|u-a\|_{L^\infty(0,\tau;\dot H^{\frac{n}{2}-1})}+\|v-a\|_{L^\infty(0,\tau;\dot H^{\frac{n}{2}-1})}
+\|a\|_{L^4(0,\tau;\dot H^{\frac{n-1}{2}})}\bigr).
\nonumber
\end{align}
We next prove that 
\[
\|u-a\|_{L^\infty(0,\tau;\dot H^{\frac{n}{2}-1})}+\|v-a\|_{L^\infty(0,\tau;\dot H^{\frac{n}{2}-1})}
+\|a\|_{L^4(0,\tau;\dot H^{\frac{n-1}{2}})}\xrightarrow[\tau\to 0]{} 0.
\]
Since $u,v,a\in{\mathscr{C}}([0,T),\dot H^{\frac{n}{2}-1})$ and $u(0)=v(0)=u_0=a(0)$, it is clear that
\[
\|u-a\|_{L^\infty(0,\tau;\dot H^{\frac{n}{2}-1})}+\|v-a\|_{L^\infty(0,\tau;\dot H^{\frac{n}{2}-1})}
\xrightarrow[\tau\to 0]{} 0
\]
The fact that $\|a\|_{L^4(0,\tau;\dot H^{\frac{n-1}{2}})}\xrightarrow[\tau\to 0]{} 0$ follows from the fact
that $a\in L^4(0,\infty;\dot H^{\frac{n-1}{2}})$. Indeed, since $u_0\in \dot H^{\frac{n}{2}-1}$, the properties of the heat semigroup
show that $a\in L^\infty(0,\infty;\dot H^{\frac{n}{2}-1})$ and $a\in L^2(0,\infty;\dot H^{\frac{n}{2}})$ (see next lemma).

\begin{lemma}
\label{lem:L2H1}
Let $s\in{\mathds{R}}$ and $u_0\in\dot H^s({\mathds{R}}^n)$. Then the function $a:t\mapsto e^{t\Delta}u_0$ belongs to 
$L^2(0,\infty;\dot H^{s+1}({\mathds{R}}^n)$.
\end{lemma}

\begin{proof}
Taking the Fourier transform of $a(t)$ in the $x$ variable, we have for all $t>0$ by definition
\[
{\mathscr{F}}_x\bigl(a(t)\bigr)(\xi)=e^{-t|\xi|^2}{\mathscr{F}}_x(u_0)(\xi),\quad \xi\in{\mathds{R}}^n.
\]
Therefore,
\begin{align*}
&\bigl\|a\bigr\|_{L^2(0,\infty;\dot H^{s+1})}^2= \int_0^\infty \|a(t)\|_{\dot H^{s+1}}^2\, {\rm d}t\\
=&
\int_0^\infty \int_{{\mathds{R}}^n}\bigl||\xi|^{s+1}e^{-t|\xi|^2}{\mathscr{F}}_x(u_0)(\xi)\bigr|^2\, {\rm dm}_n(\xi)\, {\rm d}t\\
=&\int_{{\mathds{R}}^n}\Bigl(\int_0^\infty e^{-2t|\xi|^2}, {\rm d}t \Bigr) \bigl||\xi|^{s+1}{\mathscr{F}}_x(u_0)(\xi)\bigr|^2\, {\rm dm}_n(\xi),
\quad \mbox{\footnotesize by Fubini for positive functions}\\
=&\int_{{\mathds{R}}^n}\tfrac{1}{2|\xi|^2}\,|\xi|^{2(s+1)}|{\mathscr{F}}_x(u_0)(\xi)|^2\, {\rm dm}_n(\xi),
\quad \mbox{\footnotesize computing the integral in $t$}\\
=&\tfrac{1}{2}\,\int_{{\mathds{R}}^n}|\xi|^{2s}|{\mathscr{F}}_x(u_0)(\xi)|^2\, {\rm dm}_n(\xi)\\
=&\tfrac{1}{2}\,\|u_0\|_{\dot H^s}^2. \qedhere
\end{align*}
\end{proof}

\noindent
{\it Proof of Theorem~\ref{thm:uniquenessLn}, continued.}
To prove that $a\in L^4(0,\infty;\dot H^{\frac{n-1}{2}})$, write
\begin{align*}
\|a\|_{L^4(0,\infty;\dot H^{\frac{n-1}{2}})}^4=& \int_0^\infty \|a(t)\|_{\dot H^{\frac{n-1}{2}}}^4\,{\rm d}t\\
=&\int_0^\infty \Bigl(\int_{{\mathds{R}}^n}e^{-2t|\xi|^2} |\xi|^{n-1}|{\mathscr{F}}_x(u_0)(\xi)|^2\,{\rm dm}_n(\xi)\Bigr)^2{\rm d}t\\
\le & \int_0^\infty\Bigl(\int_{{\mathds{R}}^n}e^{-4t|\xi|^2}|\xi|^n|{\mathscr{F}}_x(u_0)(\xi)|^2\,{\rm dm}_n(\xi)\Bigr)\\
& \qquad \cdot \Bigl(\int_{{\mathds{R}}^n}|\xi|^{n-2}|{\mathscr{F}}_x(u_0)(\xi)|^2\,{\rm dm}_n(\xi)\Bigr)\,{\rm d}t
\quad\mbox{\footnotesize by Cauchy-Schwarz inequality}\\
= & \bigl\|t\mapsto a(2t)\bigr\|_{L^2(0,\infty;\dot H^{\frac{n}{2}})}^2 \|u_0\|_{\dot H^{\frac{n}{2}-1}}^2 \\
=& \tfrac{1}{4}\,\|u_0\|_{\dot H^{\frac{n}{2}-1}}^4\qquad\qquad \mbox{\footnotesize thanks to Lemma~\ref{lem:L2H1} with $s=\tfrac{n}{2}-1$.}
\end{align*}
This implies that $\|a\|_{L^4(0,\tau;\dot H^{\frac{n-1}{2}})}\xrightarrow[\tau\to 0]{}0$.
Therefore, we can find $\tau_0\in(0,T]$ such that
\[
\|u-a\|_{L^\infty(0,\tau_0;\dot H^{\frac{n}{2}-1})}+\|v-a\|_{L^\infty(0,\tau_0;\dot H^{\frac{n}{2}-1})}
+\|a\|_{L^4(0,\tau_0;\dot H^{\frac{n-1}{2}})}\le\textstyle{\frac{1}{2C}}
\]
and \eqref{eq:est-w} becomes 
\[
\|w\|_{L^4(0,\tau_0;\dot H^{\frac{n}{2}-1})}\le\textstyle{\frac{1}{2}}\,\|w\|_{L^4(0,\tau_0;\dot H^{\frac{n}{2}-1})},
\]
which shows that $w=0$ $a.e.$ on $[0,\tau_0)$ and since $w$ is continuous, $w=0$ on $[0,\tau_0)$. This proves, up to translation,
that the set $E:=\{t\in [0,T);w(t)=0\}$ is open. By continuity of $w$, this set $E$ is also closed. Since it is non empty ($0\in E$), 
by connectedness of $[0,T)$, we have that $E=[0,T)$.
\end{proof}


\section{Mid-semester exam}

A printed version of the script (Chapter 1) was allowed.

\subsection{Questions}

\begin{enumerate}
\item
{\bf Sobolev embedding.} Let $n \ge 1$ and $s>\frac{n}{2}$. Let $f\in H^s({\mathds{R}}^n)$. 
\begin{enumerate}
\item
Recall in terms of ${\mathscr{F}}(f)$ what it means that $f\in H^s({\mathds{R}}^n)$.
\item
Prove that ${\mathscr{F}}(f) \in L^1({\mathds{R}}^n)$.
\item
Prove that $f\in {\mathscr{C}}_0({\mathds{R}}^n)$.
\end{enumerate}
\item
{\bf Fourier transform.}
Let $n\ge 1$. Let $f\in L^1({\mathds{R}}^n)\setminus L^2({\mathds{R}}^n)$. 
\begin{enumerate}
\item
Give an example of such a function $f$.
\item
What is the result in the lecture asserting that $g:={\mathscr{F}}(f)\in {\mathscr{C}}_0({\mathds{R}}^n)$?
\item
Why doesn't $g$ belong to $L^2({\mathds{R}}^n)$?
\item
Does $g$ belong to $L^1({\mathds{R}}^n)$?
\item
Do we have ${\mathscr{F}}^{-1}(g)=f$? In what sense?
\end{enumerate}
\end{enumerate}

\subsection{Answers}

\begin{enumerate}
\item
{\bf Sobolev embedding.} Let $n \ge 1$ and $s>\frac{n}{2}$. Let $f\in H^s({\mathds{R}}^n)$.
\begin{enumerate}
\item
According to Definition~\ref{def:sobolev}, $f\in H^s({\mathds{R}}^n)$ means that $f\in {\mathscr{S}}'({\mathds{R}}^n)$ 
is such that the function $\xi\mapsto \bigl(1+|\xi|^2\bigr)^{\frac{s}{2}}{\mathscr{F}}(f)(\xi)$
belongs to $L^2({\mathds{R}}^n)$.
\item
Since $s>\frac{n}{2}$, we have that $\xi\mapsto \frac{1}{(1+|\xi|^2)^{\frac{s}{2}}}$ belongs to $L^2({\mathds{R}}^n)$. Then
we have that 
\[
{\mathscr{F}}(f):\xi\mapsto \frac{1}{(1+|\xi|^2)^{\frac{s}{2}}} \, \bigl(1+|\xi|^2\bigr)^{\frac{s}{2}}{\mathscr{F}}(f)(\xi)
\]
belongs to $L^1({\mathds{R}}^n)$ as a product of two functions of $L^2({\mathds{R}}^n)$.
\item
Since $g={\mathscr{F}}(f)$ belongs to $L^1({\mathds{R}}^n$, we have that ${\mathscr{F}}(g)$ belongs to ${\mathscr{C}}_0({\mathds{R}}^n)$
by the Riemann-Lebesgue lemma (Theorem~\ref{thm:RL}). Moreover, as in \eqref{eq:inverseF}, we have that ${\mathscr{F}}(g)(-\xi)=f(\xi)$
for all $\xi\in{\mathds{R}}^n$. This proves that $f\in {\mathscr{C}}_0({\mathds{R}}^n)$.
\end{enumerate}
\item
{\bf Fourier transform.}
Let $n\ge 1$. Let $f\in L^1({\mathds{R}}^n)\setminus L^2({\mathds{R}}^n)$. 
\begin{enumerate}
\item
For instance, we can define $f$ on ${\mathds{R}}^n$ as follows: $f(x)=|x|^{-\frac{n}{2}}{\mathds{1}}_{(0,1)}(|x|)$, $x\in{\mathds{R}}^n$:
the singularity at $0$ is integrable 
\[
\int_{{\mathds{R}}^n}|f(x)|\,{\rm d}x=
\int_{|x|<1}|x|^{-\frac{n}{2}}\,{\rm d}x =\int_{{\mathds{S}}^{n-1}}\Bigl(\int_0^1 r^{n-1-\frac{n}{2}}\,{\rm d}r\Bigr)\,{\rm d}\omega <\infty
\]
whereas 
\[
\int_{{\mathds{R}}^n}|f(x)|^2\,{\rm d}x=
\int_{|x|<1}|x|^{-n}\,{\rm d}x =\int_{{\mathds{S}}^{n-1}}\Bigl(\int_0^1 r^{n-1-n}\,{\rm d}r\Bigr)\,{\rm d}\omega =\infty.
\]
\item
Since $f\in L^1({\mathds{R}}^n)$, Theorem~\ref{thm:RL} (Riemann-Lebesgue Lemma) asserts that $g={\mathscr{F}}(f)$
belongs to ${\mathscr{C}}_0({\mathds{R}}^n)$.
\item
If $g$ would belong to $L^2({\mathds{R}}^n)$, so would ${\mathscr{F}}^{-1}(g)$ since ${\mathscr{F}}$ is invertible on $L^2({\mathds{R}}^n)$.
And therefore, $f$ would belong to $L^2({\mathds{R}}^n)$ which is not the case by our choice of $f$.
\item
If $g$ would belong to $L^1({\mathds{R}}^n)$, by H\"older inequality ($\|g\|_r\le \|g\|_p^{1-\theta}\|g\|_q^\theta$ where $1\le p,q,r\le \infty$, 
$\theta\in[0,1]$ and
$\frac{1}{r}=\frac{1-\theta}{p}+\frac{\theta}{q}$ with $r=2$, $p=1$, $q=\infty$ and $\theta=\frac{1}{2}$), $g$ would also belong to 
$L^2({\mathds{R}}^n)$ which contradicts the previous result.
\item
We proved that $g$ does not belong to either $L^1({\mathds{R}}^n)$ nor $L^2({\mathds{R}}^n)$. But as a Fourier transform of 
an $L^1$ function, $g$ belongs to ${\mathscr{C}}_0({\mathds{R}}^n)\subset {\mathscr{S}}'({\mathds{R}}^n)$ and ${\mathscr{F}}$
is invertible on ${\mathscr{S}}'({\mathds{R}}^n)$. We then have that $f={\mathscr{F}}^{-1}(g)$ in ${\mathscr{S}}'({\mathds{R}}^n)$.
\end{enumerate}
\end{enumerate}

\begin{remark}
The fact that ${\mathscr{F}}$ maps $L^1({\mathds{R}}^n)$ to ${\mathscr{C}}_0({\mathds{R}}^n)$ doesn't mean that it is a bijection
between these two spaces. It only means that the range of the Fourier transform on $L^1({\mathds{R}}^n)$ is a subset of
${\mathscr{C}}_0({\mathds{R}}^n)$. Take for instance, in dimension 1, the odd function $\varphi$ defined as follows:
$\varphi(\xi)=\frac{1}{\ln \xi}$ for $\xi\ge e$ and $\varphi(\xi)=\frac{\xi}{e}$ for $0\le \xi\le e$. It is clear that $\varphi\in {\mathscr{C}}_0({\mathds{R}})$
(draw a picture!), but if $\varphi$ were the Fourier transform of an $L^1$ function, say $f$, we would have for all $0<\varepsilon<M<\infty$:
\begin{align*}
\int_\varepsilon^M\frac{\varphi(\xi)}{\xi}\,{\rm d}\xi=&
\int_\varepsilon^M\frac{\hat{f}(\xi)}{\xi}\,{\rm d}\xi
= \int_\varepsilon^M \tfrac{1}{\xi} \Bigl(\tfrac{1}{\sqrt{2\pi}}\int_{\mathds{R}}f(x)e^{-ix\xi}\,{\rm d}x\Bigr)\,{\rm d}\xi\\
=&\tfrac{i}{\sqrt{2\pi}} \int_\varepsilon^M\ \tfrac{1}{\xi} \Bigl(\int_{\mathds{R}} f(x) \sin(x\xi)\,{\rm d}x \Bigr)\,{\rm d}\xi 
\quad\mbox{\scriptsize{using the fact that $\varphi$ is odd: $\varphi(\xi)=\tfrac{1}{2}\bigl(\varphi(\xi)-\varphi(-\xi)\bigr)$}}\\
=& \tfrac{i}{\sqrt{2\pi}} \int_{\mathds{R}} f(x) \Bigl(\int_{\varepsilon}^{M}\tfrac{\sin(x\xi)}{\xi}\,{\rm d}\xi \Bigr)\,{\rm d}x 
\quad\mbox{\scriptsize{using Fubini}}\\
=& \tfrac{i}{\sqrt{2\pi}} \int_{\mathds{R}} f(x) \Bigl(\int_{x\varepsilon}^{xM}\tfrac{\sin(\eta)}{\eta}\,{\rm d}\eta \Bigr)\,{\rm d}x 
\quad\mbox{\scriptsize{with the change of variable $\eta=x\xi$}}.
\end{align*}
Now, for all $0<\delta<K<\infty$, it is not difficult to show that 
\[
\Bigl|\int_{\delta}^{K}\tfrac{\sin(\eta)}{\eta}\,{\rm d}\eta \Bigr|\le 4 \quad \mbox{and} \quad
\Bigl|\int_{-\delta}^{-K}\tfrac{\sin(\eta)}{\eta}\,{\rm d}\eta \Bigr|\le 4.
\]
The second inequality comes from the first one, since $\eta\mapsto \frac{\sin(\eta)}{\eta}$ is an even function. To prove the first 
estimate, if $\delta<1<K$ (the other cases follow directly from that one), we have
\begin{align*}
\Bigl|\int_{\delta}^{K}\tfrac{\sin(\eta)}{\eta}\,{\rm d}\eta \Bigr|
\le &\Bigl|\int_{\delta}^1\tfrac{\sin(\eta)}{\eta}\,{\rm d}\eta \Bigr| +\Bigl| \int_1^K\tfrac{\sin(\eta)}{\eta}\,{\rm d}\eta \Bigr|\\
\le &1 +\cos 1 +\bigl|\tfrac{\cos K}{K}\bigr| +\int_1^K\tfrac{1}{\eta^2}\,{\rm d}\eta \le 4
\end{align*}
where we estimated $\frac{\sin (\eta)}{\eta}$ by 1 for $\eta\in[\delta,1]$ and we integrated by parts the integral between $1$ and
$K$ (with the estimate $|\cos(\eta)|\le 1$ in the last part). Therefore, we would obtain for all $0<\varepsilon<M<\infty$
\[
\Bigl|\int_\varepsilon^M\frac{\varphi(\xi)}{\xi}\,{\rm d}\xi \Bigr|\le \tfrac{4}{\sqrt{2\pi}}\|f\|_1.
\]
This is clearly false for our function $\varphi$
since $\int_e^M\frac{\varphi(\xi)}{\xi}\,{\rm d}\xi=\int_e^M\frac{1}{\xi\ln(\xi)}\,{\rm d}\xi$ is unbounded as $M$ goes to $\infty$.
So there is no $L^1$ function of which $\varphi$ is the Fourier transform although $\varphi\in {\mathscr{C}}_0({\mathds{R}})$.
This proves that the image of $L^1({\mathds{R}})$ by the Fourier transform is a strict subspace of ${\mathscr{C}}_0({\mathds{R}})$.
\end{remark}

\section{Final exams}

A printed version of the script was allowed.

\subsection{First subject}

This was much too long for a 2 hours exam. The students were not supposed to finish to get the maximal grade.

\begin{exercise}
As in the script, we use the notation 
 \[
 e^{t\Delta}g={\mathscr{F}}_\xi^{-1}\bigl(\xi\mapsto e^{-t|\xi|^2}{\mathscr{F}}_x(g)(\xi)\bigr)
 \] 
 for $t>0$ and $g\in \dot H^s({\mathds{R}}^n)$ ($s\in {\mathds{R}}$).
\begin{enumerate}
\item 
Prove that for all $g\in \dot H^s({\mathds{R}}^n)$ and all $\alpha>0$, the function $(-\Delta)^\alpha e^{t\Delta}g$ belongs
to $\dot H^s({\mathds{R}}^n)$ for all $t>0$ and
\[
\|(-\Delta)^\alpha e^{t\Delta}g \|_{\dot H^s}\le \alpha^\alpha e^{-\alpha} \, t^{-\alpha}\,\|g\|_{\dot H^s}.
\]
\item 
Let $g\in L^2({\mathds{R}}^n)$. Prove that $(t,x)\mapsto \bigl(e^{t\Delta}g\bigr)(x)$ 
belongs to the space $L^2(0,\infty;\dot H^1({\mathds{R}}^n))$.

{\small\sl Hint: On the Fourier side in the $x$ variable, use Fubini Theorem.}
\item 
Prove that $(t,x)\mapsto \partial_t\bigl(t\mapsto e^{t\Delta}g\bigr)(x)$ belongs to $L^2(0,\infty;\dot H^{-1}({\mathds{R}}^n))$.
\end{enumerate}
\end{exercise}

\begin{exercise}
The purpose of this exercise is to prove that mild solutions of the quadratic heat equation in dimension $5$ are unique. So from now on, $n=5$.
\begin{enumerate}
\item 
Using Sobolev embeddings, prove that if $f\in \dot H^{\frac{1}{2}}({\mathds{R}}^5)$ and $g\in \dot H^2({\mathds{R}}^5)$, then the product
$fg$ belongs to $L^2({\mathds{R}}^5)$ and that the following estimate holds:
\begin{equation}
\label{eq:uvL^2}
\|fg\|_{L^2}\le c\,\|f\|_{\dot H^{\frac{1}{2}}}\|g\|_{\dot H^2},
\end{equation}
where $c$ is a positive constant independent from $f$ and $g$.
\item
\begin{enumerate}
\item 
Prove that if $f,g\in \dot H^{\frac{1}{2}}({\mathds{R}}^5)$, then $fg \in L^{\frac{5}{4}}({\mathds{R}}^5)$,
using Sobolev embeddings.
\item 
By duality (if $\dot H^s\hookrightarrow L^p$ then $L^{p'}\hookrightarrow \dot H^{-s}$)
and thanks to the properties of the Laplace operator, prove that
$(-\Delta)^{-1}(fg)\in \dot H^{\frac{1}{2}}({\mathds{R}}^5)$
and that the following estimate holds:
\begin{equation}
\label{eq:uvH-3/2}
\|(-\Delta)^{-1}(fg)\|_{\dot H^{\frac{1}{2}}}\le \gamma \, \|f\|_{\dot H^{\frac{1}{2}}}\|g\|_{\dot H^{\frac{1}{2}}},
\end{equation}
where $\gamma$ is a positive constant independent from $f$ and $g$.
\end{enumerate}
\item
We denote by $B$ the bilinear form defined by 
\[
(u,v) \mapsto \Bigl(t\mapsto B(u,v)(t):=\int_0^te^{(t-s)\Delta}\bigl(u(s)v(s)\bigr)\,{\rm d}s\Bigr).
\]
\begin{enumerate}
\item
Prove that $B$ is bounded from $L^2(0,\infty;\dot H^{\frac{1}{2}}({\mathds{R}}^5))\times {\mathscr{C}}_b([0,\infty);\dot H^2({\mathds{R}}^5))$
to the space $L^2(0,\infty;\dot H^{\frac{1}{2}}({\mathds{R}}^5))$ and that the following estimate holds for all 
$u\in L^2(0,\infty;\dot H^{\frac{1}{2}}({\mathds{R}}^5))$ and
$v\in {\mathscr{C}}_b([0,\infty);\dot H^2({\mathds{R}}^5))$:
\[
\|B(u,v)\|_{L^2(0,\tau;\dot H^{\frac{1}{2}}({\mathds{R}}^5))}\le 
C\, \tau^{\frac{3}{4}}\,\|u\|_{L^2(0,\tau;\dot H^{\frac{1}{2}}({\mathds{R}}^5))} \sup_{0<t<\tau}\|v(t)\|_{\dot H^2}, \quad \mbox{for all }\tau>0
\]
where $C$ is a constant independent from $\tau$, $u$ and $v$.

{\small\sl Hint: use equation \eqref{eq:uvL^2} and Young inequality.}

\item 
Prove that $B$ is bounded from $L^2(0,\infty;\dot H^{\frac{1}{2}}({\mathds{R}}^5))\times {\mathscr{C}}_b([0,\infty);\dot H^{\frac{1}{2}}({\mathds{R}}^5))$
to the space $L^2(0,\infty;\dot H^{\frac{1}{2}}({\mathds{R}}^5))$ and that for all $u,v\in L^2(0,\infty;\dot H^{\frac{1}{2}}({\mathds{R}}^5))$
the following estimate holds:
\[
\|B(u,v)\|_{L^2(0,\tau;\dot H^{\frac{1}{2}}({\mathds{R}}^5))}\le 
C'\, \|u\|_{L^2(0,\tau;\dot H^{\frac{1}{2}}({\mathds{R}}^5))} \sup_{0<t<\tau}\|v(t)\|_{\dot H^{\frac{1}{2}}}, \quad \mbox{for all }\tau>0
\]
where $C'$ is a constant independent from $\tau$, $u$ and $v$.

{\small\sl Hint: use equation \eqref{eq:uvH-3/2} and the maximal regularity operator ${\mathscr{M}}$ from the script.}
\end{enumerate}

\item 
Assume that $u_0\in \dot H^{\frac{1}{2}}({\mathds{R}}^5)$ and denote by $a$ the function $a(t):=e^{t\Delta}u_0$, $t\ge 0$.
\begin{enumerate}
\item 
Prove that $a\in {\mathscr{C}}_b([0,\infty);\dot H^{\frac{1}{2}}({\mathds{R}}^5))$.
\item 
Let $\varepsilon>0$ and define $a_\varepsilon:t\mapsto e^{(t+\varepsilon)\Delta}u_0$.
\begin{enumerate}
\item 
Prove that $a_\varepsilon\in {\mathscr{C}}_b([0,\infty);\dot H^{2}({\mathds{R}}^5))$.
\item 
Prove that $\sup_{t>0}\|a_\varepsilon(t)-a(t)\|_{\dot H^{\frac{1}{2}}}\xrightarrow[\varepsilon\to 0]{}0$.
\end{enumerate}
\end{enumerate}
\item 
Let $u_0\in \dot H^{\frac{1}{2}}({\mathds{R}}^5)$ and $a(t)=e^{t\Delta}u_0$, $t\ge 0$.
Assume that $u$ and $v$ belong to ${\mathscr{C}}_b([0,\infty);\dot H^{\frac{1}{2}}({\mathds{R}}^5))$ and satisfy
$u=a+B(u,u)$ and $v=a+B(v,v)$.
\begin{enumerate}
\item 
Prove that $\sup_{0<t<\tau}\|u(t)+v(t)-2a(t)\|_{\dot H^{\frac{1}{2}}}\xrightarrow[\tau\to 0]{}0$.
\item
Let $E=\bigl\{t\in [0,\infty); u(t)=v(t)\bigr\}$.
\begin{enumerate}
\item 
Show that $E\neq \emptyset$ and that $E$ is closed.
\item 
Give an argument that proves $E=[0,\infty)$ if one can show that $E$ is open.
\end{enumerate}
\item 
Let $w:=u-v$. Prove that 
\[
w=B(w,u+v)=B(w,u+v-2a)+2B(w,a-a_\varepsilon)+2B(w,a_\varepsilon),
\]
and show that
\begin{align*}
\|w\|_{L^2(0,\tau;\dot H^{\frac{1}{2}}({\mathds{R}}^5))}\le &\|w\|_{L^2(0,\tau;\dot H^{\frac{1}{2}}({\mathds{R}}^5))}
\Bigl(C'\sup_{0<t<\tau}\|u(t)+v(t)-2a(t)\|_{\dot H^{\frac{1}{2}}}\\
&\hspace*{.5cm}+2C'\sup_{t>0}\|a_\varepsilon(t)-a(t)\|_{\dot H^{\frac{1}{2}}}
+2C\tau^{\frac{3}{4}}\sup_{0<t<\tau}\|a_\varepsilon\|_{\dot H^2}\Bigr).
\end{align*}
\item 
Prove that there exists $\tau_0>0$ such that $[0,\tau_0)\subset E$.
\end{enumerate}
\item 
How can you conclude?
\end{enumerate}
\end{exercise}
 
\subsection{Second subject}

This is a replacement subject for a student who couldn't make it on the day of the exam.

\begin{exercise}
As in the script, we use the notation 
 \[
 e^{t\Delta}u_0={\mathscr{F}}_\xi^{-1}\bigl(\xi\mapsto e^{-t|\xi|^2}{\mathscr{F}}_x(u_0)(\xi)\bigr)
 \] 
 for $t>0$ and $u_0\in \dot H^s({\mathds{R}}^n)$ ($s\in {\mathds{R}}$).
\begin{enumerate}
\item 
Prove that for all $u_0\in \dot H^s({\mathds{R}}^n)$ and all $\alpha>0$, the function $(-\Delta)^\alpha e^{t\Delta}u_0$ belongs
to $\dot H^s({\mathds{R}}^n)$ for all $t>0$ and
\[
\|(-\Delta)^\alpha e^{t\Delta}u_0 \|_{\dot H^s}\le \alpha^\alpha e^{-\alpha} \, t^{-\alpha}\,\|u_0\|_{\dot H^s}.
\]
\item
Let $u_0\in L^2({\mathds{R}}^n)$. 
\begin{enumerate}
\item 
Prove that $(t,x)\mapsto \bigl(e^{t\Delta}u_0\bigr)(x)$ 
belongs to the space $L^2(0,\infty;\dot H^1({\mathds{R}}^n))$ and that
\[
\bigl\|t\mapsto e^{t\Delta}u_0\bigr\|_{L^2(0,\infty;\dot H^1({\mathds{R}}^n))}=\tfrac{1}{\sqrt{2}}\,\|u_0\|_2
\]
{\small\sl Hint: On the Fourier side in the $x$ variable, use Fubini Theorem.}
\item 
Prove that $(t,x)\mapsto \partial_t\bigl(t\mapsto e^{t\Delta}u_0\bigr)(x)$ belongs to $L^2(0,\infty;\dot H^{-1}({\mathds{R}}^n))$.
\item 
Prove that $(t,x)\mapsto \bigl(e^{t\Delta}u_0\bigr)(x)$ belongs to the space $L^4(0,\infty;\dot H^{\frac{1}{2}}({\mathds{R}}^n))$.

\noindent 
Prove the estimate $\|t\mapsto e^{t\Delta}u_0\|_{L^4(0,\infty;\dot H^{\frac{1}{2}}({\mathds{R}}^n))}\le \frac{1}{\sqrt{2}}\,\|u_0\|_2$.
\end{enumerate}
\end{enumerate}
\end{exercise}

\begin{exercise}
In this exercise, we will investigate the equation $\partial_tu-\Delta u=(-\Delta)^{\frac{1}{4}}(u^2)$, for $t>0$ and $x\in{\mathds{R}}^3$
with an initial condition $u(0,\cdot)=u_0\in L^2({\mathds{R}}^3)$. For the definition of $(-\Delta)^{\frac{1}{4}}$, see the script.
\begin{enumerate}
\item 
Let $s\ge 0$ and $v\in H^s({\mathds{R}}^3)$. For $\lambda >0$, denote by $v_\lambda$ the function $x\mapsto v(\lambda x)$.
Prove that $(-\Delta)^{\frac{1}{4}}v_\lambda(x)=\sqrt{\lambda} \, (-\Delta)^{\frac{1}{4}}v (\lambda x)$ for $x\in {\mathds{R}}^3$.
\item 
Assume that $u$ is a solution of $\partial_tu-\Delta u=(-\Delta)^{\frac{1}{4}}(u^2)$ on $(0,\infty)\times {\mathds{R}}^3$. Find $\alpha\in{\mathds{R}}$
such that $u_{\lambda,\alpha}:(t,x)\mapsto \lambda^\alpha u(\lambda^2t,\lambda x)$ is also solution of the same equation for all $\lambda>0$.
\item 
Find $s\ge 0$ such that $\|u_{\lambda,\alpha}(t,\cdot)\|_{\dot H^s}=\|u(t,\cdot)\|_{\dot H^s}$ for all $t\in {\mathds{R}}$, all
$\lambda >0$ and $\alpha$ found in the previous question.
\item
Find $\sigma\ge 0$ such that 
$\|u_{\lambda,\alpha}\|_{L^2(0,\infty;\dot H^{\sigma}({\mathds{R}}^3))}=\|u\|_{L^2(0,\infty;\dot H^{\sigma}({\mathds{R}}^3))}$ for all 
$\lambda >0$ and $\alpha$ as before.
\item (This was not part of the exam!)
Prove existence and uniqueness of mild solutions of the equation: global existence for initial condition $u_0$ small in the $L^2$ norm and 
local existence otherwise; uniqueness in the space ${\mathscr{C}}([0,T),L^2({\mathds{R}}^3))$ where $T>0$ is the existence time of the solution.
\end{enumerate}
\end{exercise}


\begin{bibdiv}
\begin{biblist}

\end{biblist}
\end{bibdiv}


\begin{thebibliography}{11}
\addcontentsline{toc}{section}{References}

\bib{BCD11}{book}{
 author = {Bahouri, Hajer},
 author = {Chemin, Jean-Yves},
 author = {Danchin, Rapha{\"e}l},
 title = {Fourier analysis and nonlinear partial differential equations},
 series = {Grundlehren Math. Wiss.},
 volume = {343},
 year = {2011},
 publisher = {Berlin: Springer},
}

\bib{BL76}{book}{
 author = {Bergh, J{\"o}ran},
 author = {L{\"o}fstr{\"o}m, J{\"o}rgen},
 title = {Interpolation spaces. {An} introduction},
 series = {Grundlehren Math. Wiss.},
 volume = {223},
 year = {1976},
 publisher = {Springer, Cham},
}

\bib{Da20}{unpublished}{
author = {Danchin, Rapha{\"e}l},
title = {Fourier analysis methods for models of nonhomogeneous fluids},
year = {2020},
note = {https://perso.math.u-pem.fr/danchin.raphael/cours/coursM2-20.pdf},
}

\bib{FK64}{article}{
 author = {Fujita, Hiroshi},
 author = {Kato, Tosio},
 title = {On the {Navier}-{Stokes} initial value problem. {I}},
 journal = {Arch. Ration. Mech. Anal.},
 volume = {16},
 pages = {269--315},
 year = {1964},
}

\bib{Mo99}{article}{
 author = {Monniaux, Sylvie},
 title = {Unicit{\'e} des solutions ``mild'' de l'{\'e}quation de {Navier}-{Stokes} et r{\'e}gularit{\'e} maximale {{\(L^p\)}}},
 journal = {C. R. Acad. Sci., Paris, S{\'e}r. I, Math.},
 volume = {328},
 number = {8},
 pages = {663--668},
 year = {1999},
}

\bib{Mo09}{incollection}{
 author = {Monniaux, Sylvie},
 title = {Maximal regularity and applications to {PDEs}},
 booktitle = {Analytical and numerical aspects of partial differential equations. Notes of a lecture series. Lectures held at the Technische Universit\"at Berlin, Berlin, Germany, 2007--2009},
 pages = {247--287},
 year = {2009},
 publisher = {Berlin: Walter de Gruyter},
}

\bib{Mo24}{incollection}{
 author = {Monniaux, Sylvie},
 title = {Maximal regularity as a tool for partial differential equations},
 booktitle = {Modern problems in PDEs and applications. Extended abstracts of the 2023 GAP Center summer school, Ghent, Belgium, August 23 -- September 2, 2023},
 pages = {81--93},
 year = {2024},
 publisher = {Cham: Birkh{\"a}user},
}
\end{thebibliography}
\end{document}